\numberwithin{equation}{section}
\newtheorem{theorem}[equation]{Theorem}
\newtheorem{proposition}[equation]{Proposition}
\newtheorem{lemma}[equation]{Lemma}
\newtheorem{corollary}[equation]{Corollary}
\newtheorem{maintheorem}{Theorem}
\theoremstyle{definition}
\newtheorem{remark}[equation]{Remark}
\newtheorem{question}[equation]{Question}
\newtheorem{example}[equation]{Example}
\newtheorem{definition}[equation]{Definition}
\newcommand{\cC}{\mathcal{C}}
\newcommand{\sC}{\mathscr{C}}
\newcommand{\cD}{\mathcal{D}}
\newcommand{\bG}{\mathbf{G}}
\newcommand{\rG}{\mathrm{G}}
\newcommand{\rK}{\mathrm{K}}
\newcommand{\rM}{\mathrm{M}}
\newcommand{\bN}{\mathbf{N}}
\newcommand{\bQ}{\mathbf{Q}}
\newcommand{\bR}{\mathbf{R}}
\newcommand{\bS}{\mathbf{S}}
\newcommand{\fS}{\mathfrak{S}}
\newcommand{\bZ}{\mathbf{Z}}
\newcommand{\GG}{\mathbb{G}}
\newcommand{\HH}{\mathbb{H}}
\newcommand{\arxiv}[1]{\href{http://arxiv.org/abs/#1}{{\tiny\tt arXiv:#1}}}
\newcommand{\DOI}[1]{\href{http://doi.org/#1}{\color{purple}{\tiny\tt DOI:#1}}}
\newcommand{\defn}[1]{\emph{#1}}
\newcommand{\myuline}[1]{%
  \uline{\phantom{#1}}%
  \llap{\contour{white}{#1}}%
}
\DeclareMathOperator{\uRep}{\text{\myuline{\rm Rep}}}
\DeclareMathOperator{\uPerm}{\ul{Perm}}
\let\ul\underline
\renewcommand{\phi}{\varphi}
\DeclareMathOperator{\tr}{tr}
\DeclareMathOperator{\ptr}{ptr}
\DeclareMathOperator{\Eq}{Eq}
\DeclareMathOperator{\End}{End}
\DeclareMathOperator{\Aut}{Aut}
\DeclareMathOperator{\Isom}{Isom}
\DeclareMathOperator{\Hom}{Hom}
\DeclareMathOperator{\Rep}{Rep}
\DeclareMathOperator{\ev}{ev}
\DeclareMathOperator{\cv}{cv}
\DeclareMathOperator{\Sym}{Sym}
\DeclareMathOperator{\Et}{Et}
\DeclareMathOperator{\im}{im}
\DeclareMathOperator{\rev}{rev}
\newcommand{\id}{\mathrm{id}}
\renewcommand{\Vec}{\mathrm{Vec}}
\newcommand{\sVec}{\mathrm{sVec}}
\newcommand{\op}{\mathrm{op}}
\newcommand{\lw}{{\textstyle\bigwedge}}
\newcommand{\bone}{\mathbf{1}}
\newcommand{\bb}{{\bullet}}
\newcommand{\ww}{{\circ}}
\title[Characterization of the Delannoy category by Adams operations]{A characterization of the Delannoy category \\ by Adams operations}
\author{Andrew Snowden}
\address{Department of Mathematics, University of Michigan, Ann Arbor, MI, USA}
\email{\href{mailto:asnowden@umich.edu}{asnowden@umich.edu}}
\urladdr{\url{http://www-personal.umich.edu/~asnowden/}}
\thanks{AS was supported by NSF grant DMS-2301871.}
\author{Noah Snyder}
\address{Department of Mathematics, Indiana University, Bloomington, IN, USA}
\email{\href{mailto:nsnyder1@indiana.edu}{nsnyder1@indiana.edu}}
\urladdr{\url{https://nsnyder1.pages.iu.edu/}}
\thanks{NS was supported by NSF grant DMS-2453032 and Simons grant MPS-TSM-00007608.}
\date{November 7, 2025}
\begin{document}

\begin{abstract}
In recent joint work with Harman, we studied a pre-Tannakian category called the Delannoy category, and showed that it had numerous special properties. One of these is that the Adams operations on its Grothendieck group are trivial. In this paper, we prove three theorems inspired by this. Theorem~A states that the Delannoy category is the unique semi-simple pre-Tannakian category having a generator that is fixed by the second Adams operation and whose exterior powers are simple. Theorem~B states the Delannoy category is uniquely determined by its Grothendieck semi-ring (among semi-simple pre-Tannakian categories). This is reminiscent of Kazhdan--Wenzl's recognition theorem for quantum $\mathfrak{gl}_n$, and many subsequent results. Finally, Theorem~C establishes some properties of pre-Tannakian categories where the second Adams operation fixes a generator.
\end{abstract}

\maketitle
\tableofcontents

\section{Introduction}

In recent joint work with Harman \cite{line}, we studied a pre-Tannakian tensor category called the \defn{Delannoy category}. Of its numerous special properties, one is that the Adams operations on its Grothendieck group are trivial. The Delannoy category was the first known non-trivial semi-simple category with this property, and remains (essentially) the only known example. In view of this, two natural questions are: (a) To what extent does this property characterize the Delannoy category? (b) What, if anything, can one say about a general category with this property? In this paper, we give a precise answer to the first question, and make progress on the second.

\subsection{Adams operations} \label{ss:adams}

Before stating our results, we provide some brief background on Adams operations. Let $\cC$ be a semi-simple pre-Tannakian tensor category over an algebraically closed field $k$, and let $\rK(\cC)$ denotes its Grothendieck group; see \S \ref{ss:tencat} for tensor category terminology. For each prime number $p$ different from the characteristic, there is a canonical ring homomorphism
\begin{displaymath}
\psi^p \colon \rK(\cC) \to \rK(\cC)
\end{displaymath}
called the \defn{Adams operation}\footnote{There is some subtlety in the construction when $k$ has positive characteristic and $p$ exceeds the characteristic. The method of \cite{Benson} can be used. This case will not play a major role for us.}. If $\cC$ is the category of complex representations of a finite group $G$, then the Adams operations have a simple description in terms of characters: $\psi^p V$ is the virtual representation with $\chi_{\psi^p V}(g) = \chi_V(g^p)$. For the purposes of this paper, the operator $\psi^2$ will be the most important. It is given explicitly by the formula
\begin{displaymath}
\psi^2([X]) = [\Sym^2{X}] - [\lw^2{X}].
\end{displaymath}

As mentioned, it is quite rare for all of the Adams operations to be trivial on $\rK(\cC)$. This is the case for the Delannoy category and its Deligne tensor powers, and these are the only known semi-simple examples. The Delannoy category is associated to the oligomorphic group $\GG=\Aut(\bR,<)$, and in \cite{homoperm} we showed that there is a semi-simple pre-Tannakian category associated to the wreath product $\GG \wr \GG$. It is plausible that this category has the property as well, but this has not been proved yet; of course, one could also consider iterated wreath products. Building on work of Czenky \cite{Czenky}, we show that if $\cC$ has moderate growth and $\psi^2$ is trivial on $\rK(\cC)$ (and $k$ does not have characteristic~2) then $\cC$ is the category of vector spaces (see Corollary~\ref{cor:moderate}). Thus any interesting example with trivial $\psi^2$ must have super-exponential growth.

\subsection{Characterization of Delannoy}

We assume\footnote{With minor additional work, our results can be extended to characteristics 5 and 7.} from now on that our coefficient field does not have characteristic 2, 3, 5, or 7. We let $\cD$ denote the Delannoy category, which is a semi-simple pre-Tannakian category. The Delannoy category has two ``smallest'' non-trivial simple objects $L_{\bb}$ and $L_{\ww}$ that are dual to each other. All exterior powers of these two simples remain simple. We review additional background on $\cD$ in \S \ref{ss:delannoy}, but for our main results, this is all one needs to know. Our first main theorem is a characterization of $\cD$:

\begin{maintheorem} \label{thm:A}
Let $\cC$ be a semi-simple pre-Tannakian category with a generating object $X$ satisfying the following conditions:
\begin{enumerate}[(i)]
\item The class $[X]$ in $\rK(\cC)$ is fixed by $\psi^2$.
\item All exterior powers of $X$ are simple.
\end{enumerate}
Then $\cC$ is equivalent to the Delannoy category. Precisely, there is an equivalence of tensor categories $\Phi \colon \cD \to \cC$ satisfying $\Phi(L_{\bb})=X$, which is unique up to isomorphism.
\end{maintheorem}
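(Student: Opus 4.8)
The plan is to split the proof into a combinatorial part --- determining the Grothendieck semiring of $\cC$ --- and a reconstruction part that promotes this to a tensor equivalence. Two quick reductions come first: since $\lw^1 X=X$, hypothesis~(ii) says in particular that $X$ is simple, and $X\not\cong\mathbf{1}$ (otherwise $\lw^2 X=0$ is not simple); and $\psi^2$ is the identity on all of $\rK(\cC)$, since it is a ring homomorphism compatible with duality, so by~(i) it fixes $[X]$ and $[X^\ast]$, hence the subring they generate, which is all of $\rK(\cC)$ because $X$ generates $\cC$. Unwinding the formula for $\psi^2$, the statement $\psi^2=\id$ is equivalent to the family of relations $[\Sym^2 S]=[S]+[\lw^2 S]$, i.e.\ $[S\otimes S]=[S]+2[\lw^2 S]$, as $S$ ranges over the simple objects of $\cC$.

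The heart of the matter is to compute $\rK(\cC)$ from these relations together with~(ii). I would argue by induction on $n$, decomposing $X\otimes S$ for each simple $S$ occurring in $X^{\otimes(n-1)}$. At each step the admissible decompositions are cut down by: non-negativity of fusion coefficients; rigidity, via $\dim\Hom(X\otimes S,T)=\dim\Hom(S,X^\ast\otimes T)$, which also forces the identity of $X^\ast$; the $\lambda$-ring identities satisfied by the classes $[\lw^i X]$; and the per-simple relations $[\Sym^2 S]=[S]+[\lw^2 S]$ recalled above. The claim to be established is that this system has a unique solution, coinciding with the explicit description of $\rK(\cD)$, its distinguished basis, and its fusion rules recorded in \cite{line}, under a based-semiring isomorphism sending $[X]$ to $[L_{\bb}]$ (and hence $[X^\ast]$ to $[L_{\ww}]$). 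This is the step I expect to be hardest: one must verify that no genuine ambiguity survives the induction, and in particular that $\dim X$ and the self-duality status of $X$ are forced; for these points I would draw on the structural results for pre-Tannakian categories with a $\psi^2$-fixed generator developed alongside Theorem~C.

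Given the based-semiring isomorphism $\rK(\cC)\cong\rK(\cD)$, the tensor equivalence is produced by a reconstruction argument of Kazhdan--Wenzl type --- in essence, this is the content of Theorem~B. Concretely, one takes a diagrammatic presentation of $\cD$ generated by $L_{\bb}$ (from \cite{line}) and defines $\Phi\colon\cD\to\cC$ by $\Phi(L_{\bb})=X$, sending each generating morphism to the corresponding morphism between tensor powers of $X$; these morphisms exist and are essentially canonical because the relevant $\Hom$-spaces are one-dimensional, as one reads off from the semiring. One then checks the defining relations in $\cC$; since $\cC$ is semisimple, each relation holds as soon as it holds after pairing with every simple object, which has been reduced to the now-known numerics. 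Finally $\Phi$ is fully faithful because $\dim\Hom_\cC(X^{\otimes m},X^{\otimes n})=\dim\Hom_\cD(L_{\bb}^{\otimes m},L_{\bb}^{\otimes n})$ --- both sides being the same pairing in the identified Grothendieck semirings, the endomorphism algebra of each simple object being $k$ --- and it is essentially surjective because $X$ generates $\cC$; hence it is an equivalence.

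For the uniqueness clause, suppose $\Phi,\Phi'$ are two such equivalences. Then $\Psi=\Phi^{-1}\circ\Phi'$ is a tensor auto-equivalence of $\cD$ with $\Psi(L_{\bb})\cong L_{\bb}$, so it induces a based-semiring automorphism of $\rK(\cD)$ fixing $[L_{\bb}]$, hence also $[L_{\ww}]=[L_{\bb}]^\ast$, hence every class (as $[L_{\bb}]$ and $[L_{\ww}]$ generate $\rK(\cD)$ as a ring); thus $\Psi$ fixes all simple objects up to isomorphism, and after adjusting by a monoidal natural isomorphism we may assume $\Psi$ fixes every object on the nose. That $\Psi$ is then isomorphic to $\id_{\cD}$ follows from the absence of nontrivial monoidal automorphisms of the diagrammatic model fixing the generator, again a feature of the explicit description of $\cD$. (If Theorem~B is established first and already packages the uniqueness, Theorem~A reduces precisely to the semiring computation of the second paragraph, which I would then regard as the whole substance of the proof.)
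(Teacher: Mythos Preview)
Your plan inverts the order that makes the argument work, and the step you flag as hardest is in fact a genuine gap. You propose to determine $\rK_+(\cC)$ by induction and then apply a Theorem~\ref{thm:B}--style reconstruction, but the constraints you list---$[S^{\otimes 2}]=[S]+2[\lw^2 S]$ for each simple $S$, rigidity, and $\lambda$-identities for $X$---do not determine off-diagonal products such as $[X\otimes\lw^r X]$ or $[X\otimes X^*]$. Already the decomposition of $X\otimes X^*$ bifurcates into Cases~I and~II of \S\ref{ss:tri}, and separating them requires the degree-three analysis of Proposition~\ref{prop:wedge3-case1}, not a bare induction on tensor length. More seriously, there is no direct argument from your hypotheses that the only simples occurring in $X^{\otimes n}$ are the $\lw^r X$; in the paper this is a \emph{consequence} of already having a functor $\Phi\colon\cD\to\cC$, obtained by applying $\Phi$ to the known decomposition $L_{\bb}^{\otimes n}=\bigoplus_r(\lw^r L_{\bb})^{c(n,r)}$ in $\cD$.

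The paper proceeds in the opposite order. From only low-degree information---that $\lw^3 X$ is a non-trivial simple, hence $X$ is not a summand of $\bS_{(2,1)}(X)$---it verifies the hypotheses of Theorem~\ref{thm:C-1} and obtains $\Phi$ via the universal property (through the o-Frobenius structure on $X$, Theorem~\ref{thm:key}). The hypothesis that \emph{all} $\lw^n X$ are simple is used only afterwards, to show $\Phi$ is fully faithful (Proposition~\ref{prop:del-full}) by matching Hom-dimensions. Your reconstruction paragraph also contains a separate error: the o-Frobenius axiom~(d), $\gamma+\gamma'=\id+\delta\mu$, is a relation among four specific elements of the $5$-dimensional algebra $\End(X^{\otimes 2})$ and cannot be ``reduced to the now-known numerics''---knowing the Grothendieck semiring says nothing about which linear relation these endomorphisms satisfy. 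The paper establishes it by a trace computation (Lemma~\ref{lem:key-4}). If you instead invoke Theorem~\ref{thm:B} as a black box this issue disappears, but then you are back to needing the semiring isomorphism, which you have not shown how to obtain independently of $\Phi$.
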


Let $\rK_+(\cC)$ denote the Grothendieck semi-ring of $\cC$, i.e., the positive cone in $\rK(\cC)$. Using similar methods to Theorem~\ref{thm:A}, we show that the Delannoy category is uniquely characterized by its Grothendieck semi-ring, in the following sense:

\begin{maintheorem} \label{thm:B}
If $\cC$ is a semi-simple pre-Tannakian category with a semi-ring isomorphism $i \colon \rK_+(\cD) \to \rK_+(\cC)$ then $\cC$ is equivalent to the Delannoy category $\cD$.
\end{maintheorem}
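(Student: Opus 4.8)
\textit{Proof plan.}
The plan is to deduce Theorem~\ref{thm:B} from Theorem~\ref{thm:A} by building a generator of $\cC$ out of $i$. Since $\cC$ is semi-simple, every element of $\rK_+(\cC)$ is the class of a unique object of $\cC$; let $X$ be the object with $[X]=i([L_{\bb}])$. A fair amount of structure is intrinsic to a Grothendieck semi-ring and hence preserved by $i$: the tensor unit $\bone$ is its multiplicative identity; the class of a simple object is precisely a nonzero element that cannot be written as a sum of two nonzero elements; the relation $[Y]\le[Z]$ (``$Y$ is isomorphic to a summand of $Z$'') is $[Z]-[Y]\in\rK_+$; and, for a simple object $L$, the class $[L^{\ast}]$ is the unique simple class $m$ with $[\bone]\le[L]\cdot m$. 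Consequently $i$ sends simple classes to simple classes and intertwines the duality involution, so $[X^{\ast}]=i([L_{\ww}])$; and since $L_{\bb}$ generates $\cD$ --- equivalently, every simple class of $\rK_+(\cD)$ is a summand of some word in $[L_{\bb}]$ and $[L_{\ww}]$ --- the object $X$ generates $\cC$.

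It remains to verify conditions (i) and (ii) of Theorem~\ref{thm:A} for $X$, and this is the crux: neither $\psi^{2}$ nor the exterior powers $\lw^{n}$ are determined by the Grothendieck semi-ring, as they record information about the symmetric braiding. The idea is to recover them via positivity. The exterior powers make $\rK(\cC)$ a special $\lambda$-ring whose $\lambda$-operations are \emph{effective} (they carry $\rK_+(\cC)$ into itself), and similarly for $\cD$; moreover $\lw^{n+1}X$ is a subobject of $X\otimes\lw^{n}X$, and the Schur functors $\bS_{\lambda}(X)$ cut out of $X^{\otimes m}$ are likewise effective and satisfy the usual Littlewood--Richardson/Pieri relations. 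I claim that these positivity constraints, combined with the explicit decompositions of $L_{\bb}^{\otimes m}$ in $\cD$ (see \S\ref{ss:delannoy}) transported through $i$, force $[\lw^{n}X]=i([\lw^{n}L_{\bb}])$ for every $n$. Granting the claim, condition (ii) is immediate, since $i$ preserves simplicity of classes and every $[\lw^{n}L_{\bb}]$ is simple, and condition (i) follows from
\[
\psi^{2}([X]) \;=\; [X]^{2}-2[\lw^{2}X] \;=\; i\bigl([L_{\bb}]^{2}-2[\lw^{2}L_{\bb}]\bigr) \;=\; i\bigl(\psi^{2}([L_{\bb}])\bigr) \;=\; i([L_{\bb}]) \;=\; [X],
\]
where we use that $\psi^{2}$ is trivial on $\rK(\cD)$. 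Theorem~\ref{thm:A} then provides an equivalence $\cD\xrightarrow{\ \sim\ }\cC$, which is the conclusion of Theorem~\ref{thm:B}.

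The induction determining $[\lw^{n}X]$ isolates $[\lw^{n+1}X]$ inside $[X]\cdot[\lw^{n}X]=[\lw^{n+1}X]+[\bS_{(2,1^{n-1})}(X)]$ and companion identities; the genuinely delicate step --- and the one I expect to be the main obstacle --- is the base case $n=2$, where one sees only $[\Sym^{2}X]+[\lw^{2}X]=[X]^{2}$ in the semi-ring and must rule out the ``sign-flipped'' possibility $[\lw^{2}X]=i([\Sym^{2}L_{\bb}])$, which would give $\psi^{2}([X])=-[X]$ and hence could never satisfy Theorem~\ref{thm:A}. To settle this I would exploit the explicit structure of $\rK_+(\cD)$: for instance that exactly one of $\Sym^{2}L_{\bb}$, $\lw^{2}L_{\bb}$ is simple, that the splitting of $X^{\otimes2}$ must be compatible with duality and with the higher exterior and symmetric powers of $X$, and, if needed, a comparison of Frobenius--Perron or categorical dimensions of the two candidate summands --- identifying, in the flipped scenario, a positivity requirement on some $\bS_{\lambda}(X)$ that is violated.
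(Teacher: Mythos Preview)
Your setup is correct and matches the paper: define $X$ by $[X]=i([L_{\bb}])$, observe that $i$ preserves simple classes and the duality involution, and deduce that $X$ generates $\cC$. For the base case $n=2$ the paper does exactly what you suggest, via categorical dimensions: writing $[X_n]=i([\lw^nL_{\bb}])$ one has $\dim X_n=\binom{d}{n}$ with $d=\dim X$ (because $[L_n]=\binom{[L_{\bb}]}{n}$ in $\rK(\cD)\otimes\bQ$), and each wrong splitting of $[X]^2=[X]+2[X_2]$ into $[\Sym^2X]+[\lw^2X]$ leads to a contradiction---either $X$ becomes invertible, or a dimension equation fails, or some $\dim X_n$ with $n\le4$ vanishes. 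So indeed $\lw^2X=X_2$ and $X$ is $\psi^2$-fixed.

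The gap is the inductive step for $n\ge 3$. From $[X]\cdot[\lw^nX]=[\lw^{n+1}X]+[\bS_{(2,1^{n-1})}(X)]$ and positivity you cannot isolate $[\lw^{n+1}X]$: the Pieri relations give only $p(n)$ linear constraints among the $p(n+1)$ unknown classes $[\bS_{\lambda}(X)]$ with $|\lambda|=n+1$. Already passing from $n=2$ to $n=3$ one obtains only $[\lw^3X]+[\bS_{(2,1)}(X)]=2[X_2]+3[X_3]$, and categorical dimension narrows $[\lw^3X]$ down to three effective candidates rather than one. Your ``companion identities'' are not specified, and I do not see how to close the induction by positivity alone.

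The paper circumvents this by invoking Theorem~\ref{thm:C} rather than Theorem~\ref{thm:A}. Once $\psi^2$ fixes $X$ and $\lw^2X$ is simple, one checks hypothesis~(iii) of Theorem~\ref{thm:C} in one line: $L_{\bb}$ has multiplicity one in $L_{\bb}^{\otimes 3}$, hence $X$ has multiplicity one in $X^{\otimes 3}$, and by Proposition~\ref{prop:sym3} that copy sits in $\Sym^3X$; so $X$ is not a summand of $\bS_{(2,1)}(X)$. Theorem~\ref{thm:C} now produces a faithful tensor functor $\Phi\colon\cD\to\cC$ with $\Phi(L_{\bb})=X$, and therefore $\Phi(L_n)=\lw^nX$. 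The ring map $i^{-1}\circ\Phi\colon\rK(\cD)\to\rK(\cD)$ fixes $[L_{\bb}]$, and since each $[L_n]$ is a polynomial in $[L_{\bb}]$ it fixes every $[L_n]$; hence $[\lw^nX]=i([L_n])=[X_n]$ for all $n$ simultaneously. Simplicity of every $\lw^nX$ follows, and Proposition~\ref{prop:del-full} then shows $\Phi$ is full, hence an equivalence. The missing idea in your plan is to build the tensor functor first from low-degree data and let \emph{it} carry the $\lambda$-structure across, rather than trying to reconstruct the $\lambda$-structure from the bare semi-ring.
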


In fact, we show that the semi-ring isomorphism $i$ is induced by a (possibly contravariant) equivalence $\cD \to \cC$. In the course of the proof, we show that the auto-equivalence group of $\cD$ has order two (or four if contravariant functors are alowed). See \S \ref{ss:thmB} for details.

\begin{remark} \label{rem:recognition}
There is a long history of results like Theorem \ref{thm:B} saying that various monoidal categories of interest are characterized by their Grothendieck ring. The first and most famous of these is Kazhdan--Wenzl's recognition theorem for the category of representations of quantum $\mathfrak{gl}_n$ \cite{KazhdanWenzl}. Similar recognition theorems for other quantum group categories appear in \cite{TubaWenzl, Kuperberg, MPS1, MPS2, Copeland, CopelandEdieMichell, MW}. Note that our recognition theorem for the Delannoy category assumes that the category $\cC$ is symmetric; this is a stronger assumption than many of the above results, which only assume braided or make no commutativity assumption. It is interesting to ask if the symmetric assumption in Theorem \ref{thm:B} could be relaxed to braided. In this direction, we will show that Deligne's category $\uRep(\fS_t)$ satisfies such a braided recognition theorem in \cite{SnyderSpencer}.
\end{remark}

\subsection{General categories with trivial Adams operations}

As mentioned above, a natural problem is to investigate general pre-Tannakian categories with trivial Adams operations. In this direction, we prove the following result:

\begin{maintheorem} \label{thm:C}
Let $\cC$ be a semi-simple pre-Tannakian category and let $X$ be a generating object of $\cC$ that satisfies the following conditions:
\begin{enumerate}[(i)]
\item The class $[X]$ in $\rK(\cC)$ is fixed by $\psi^2$.
\item $X$ and $\lw^2{X}$ are simple.
\item $X$ is not a summand of the Schur functor $\bS_{(2,1)}(X)$.
\end{enumerate}
Then:
\begin{enumerate}
\item There is a (essentially unique) tensor functor $\Phi \colon \cD \to \cC$ satisfying $\Phi(L_{\bb})=X$.
\item The object $X$ has dimension $-1$, super exponential growth, and is fixed by $\psi^p$ for all $p$ different from the characteristic.
\item There is an oligomorphic group $(G, \Omega)$ with a regular measure $\mu$ such that $\cC$ is equivalent to the tensor category $\uRep(G, \mu)$.
\item The group $G$ preserves a total order on $\Omega$ and acts transitively on the set $\Omega^{(n)}$ of $n$-element subsets of $\Omega$ for $n=1,2,3$ (i.e., it is 3-homogeneous).
\end{enumerate}
\end{maintheorem}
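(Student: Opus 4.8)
The plan is to reduce everything to constructing the tensor functor $\Phi$ of part~(1), following the proof of Theorem~\ref{thm:A} but tracking which of the hypotheses (i)--(iii) the construction actually uses; once $\Phi$ is in hand, parts~(2)--(4) are comparatively formal. First one unwinds (i): from $[X]^2=[\Sym^2 X]+[\lw^2 X]$ and $\psi^2([X])=[\Sym^2 X]-[\lw^2 X]=[X]$ one gets $[\Sym^2 X]=[X]+[\lw^2 X]$, hence (by semisimplicity) $\Sym^2 X\cong X\oplus\lw^2 X$ and $X^{\otimes 2}\cong X\oplus(\lw^2 X)^{\oplus 2}$; here (ii) makes $\lw^2 X$ simple, a $\Hom$-count rules out $\lw^2 X\cong\bbone$, and (iii) rules out the remaining degenerate possibility $X\cong\lw^2 X$ (which would put $X$ inside $\bS_{(2,1)}(X)=X\otimes\lw^2 X\ominus\lw^3 X$). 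Applying the ring homomorphism $\psi^2$ to $[\lw^2 X]=\tfrac{1}{2}([X]^2-[X])$ shows $\psi^2$ also fixes $[\lw^2 X]$, so since $X$ generates $\cC$ (and $\psi^2$ commutes with duality), $\psi^2$ is the identity on $\rK(\cC)$. Determining the decompositions of $X\otimes X^\vee$ and $X^{\otimes 3}$ in the same fashion, and equipping $\bbone\oplus X\oplus X^\vee$ with the Frobenius-algebra structure coming from rigidity, should supply exactly the structure morphisms and relations required to invoke the universal property of $\cD$ used in proving Theorem~\ref{thm:A}, yielding the essentially unique tensor functor $\Phi\colon\cD\to\cC$ with $\Phi(L_{\bb})=X$. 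This is part~(1).

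For part~(2): tensor functors preserve categorical dimension, so $\dim X=\dim L_{\bb}=-1$ (the value recorded in \S\ref{ss:delannoy}), and they induce ring maps on Grothendieck rings commuting with exterior powers, hence with all $\psi^p$; since every Adams operation is trivial on $\rK(\cD)$, we get $\psi^p([X])=\psi^p\Phi_*([L_{\bb}])=\Phi_*([L_{\bb}])=[X]$ for all $p\neq\operatorname{char}k$. Since $\lw^2 X$ is simple, hence nonzero, $X\not\cong\bbone$ and so $\cC\not\simeq\Vec$; as $\psi^2$ is trivial on $\rK(\cC)$ and $\operatorname{char}k\neq 2$, Corollary~\ref{cor:moderate} shows $\cC$ does not have moderate growth, so $X$ has super exponential growth.

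For parts~(3) and (4): let $Q=\bbone\oplus L_{\bb}\oplus L_{\ww}\in\cD$ be the permutation object on $\Omega=\bR$, so that $A:=\Phi(Q)\cong\bbone\oplus X\oplus X^\vee$ is, being the image of a special commutative Frobenius algebra under a tensor functor, again a special commutative Frobenius algebra of dimension $-1$ that tensor-generates $\cC$. The reconstruction theorem for oligomorphic groups then yields an oligomorphic group $(G,\Omega')$ with a regular measure $\mu$ (regular by non-degeneracy of the Frobenius form) and an equivalence $\cC\simeq\uRep(G,\mu)$ carrying $A$ to the permutation object on $\Omega'$; this is part~(3). The functor $\Phi\colon\uRep(\GG)\to\uRep(G,\mu)$ corresponds to a morphism $G\to\GG$ which exhibits $G$ as acting on $\Omega'\cong\bR$ through order-automorphisms, so $G$ preserves a total order on $\Omega'$. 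Finally, the number of $G$-orbits on $(\Omega')^{(n)}$ equals $\dim\Hom_{\cC}(\bbone,\lw^n A)$, and expanding $\lw^n A$ for $n=1,2,3$ and using (ii) together with (iii) (to kill the $\bS_{(2,1)}(X)$-type contribution to $\lw^3 A$) shows each of these dimensions equals $1$; hence $G$ is $3$-homogeneous, completing part~(4).

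The main obstacle is part~(1): isolating the universal property of $\cD$ precisely enough that the comparatively meagre data extracted from (i)--(iii) — as opposed to the full simplicity of every exterior power available in Theorem~\ref{thm:A} — still produces $\Phi$, together with the verification of the defining relations among the structure morphisms on $\bbone\oplus X\oplus X^\vee$. Secondary difficulties are the careful elimination of the degenerate configurations in the local analysis and making the oligomorphic reconstruction of part~(3) precise enough to transport the order and the measure.
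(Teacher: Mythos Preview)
Your overall plan---build $\Phi$ via a universal property, then read off (b)--(d)---matches the paper, but the key step in~(a) is missing. There is no ``Frobenius-algebra structure coming from rigidity'' on $\bone\oplus X\oplus X^*$: rigidity supplies no multiplication. The paper's actual work (Theorem~\ref{thm:key}) is to put an o-Frobenius structure on $X$ itself. The maps $\mu\colon X^{\otimes 2}\to X$ and $\delta\colon X\to X^{\otimes 2}$ are unique up to scalar since $X$ occurs once in $X^{\otimes 2}$, and hypothesis~(iii) is used precisely to prove $\mu$ associative: with $X$ absent from $\bS_{(2,1)}(X)$, any map $X^{\otimes 3}\to X$ factors through $\Sym^3 X$ and is therefore $\fS_3$-invariant (Lemma~\ref{lem:key-11}). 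The remaining axiom $\gamma+\gamma'=\id+\delta\mu$ and the value $\dim X=-1$ require a genuine trace computation (Lemmas~\ref{lem:key-3} and~\ref{lem:key-4}). Only then is the \'etale (indeed 1-Delannic) structure on $A=X\oplus\bone\oplus X^*$ assembled by hand from $\mu,\delta$ (Proposition~\ref{prop:ofrob-to-etale}) and the universal property (Theorem~\ref{thm:delannic}) invoked. Your suggested use of~(iii) to rule out $X\cong\lw^2 X$ is not how it enters; those coincidences are handled by the case analysis in~\S\ref{ss:tri}.

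There are two further errors. The claim that $\psi^2=\id$ on all of $\rK(\cC)$ is unjustified: $X$ generating $\cC$ as a tensor category does not make $[X],[X^*]$ generate $\rK(\cC)$ as a ring, so your route to super-exponential growth via Corollary~\ref{cor:moderate} breaks; the paper instead notes that $\Phi$ is faithful and that $L_{\bb}$ already has super-exponential growth (Corollary~\ref{cor:C-1}). In~(d), the identity ``$G$-orbits on $\Omega^{(n)}=\dim\Hom(\bone,\lw^n A)$'' is not valid a priori---$\lw^n A$ is the permutation module on $n$-subsets only \emph{after} $G$ is known to preserve an order, which is exactly what you are trying to prove---and $\Phi$ does not in general correspond to a group homomorphism $G\to\GG$. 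The paper instead computes $\dim\Gamma(A^{\otimes n})=1,3,13$ for $n\le 3$ (Lemma~\ref{lem:ofrob-to-etale-3}), converts to orbit counts on $\Omega^{[n]}$, and extracts the total order via a tournament argument (Lemma~\ref{lem:ofrob-to-etale-4}); the 3-homogeneity then follows.
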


In statement (c), we are appealing to the theory of \cite{repst} that associates a tensor category to an oligomorphic group equipped with a measure; this is briefly reviewed in \S \ref{ss:oligo}.

It would be nice if condition (iii) in the theorem could be relaxed. We make a few comments about this.
\begin{itemize}
\item We give some other conditions that imply (iii) in \S \ref{ss:further}. For instance, if (i) and (ii) hold and $[X]$ is also fixed by $\psi^3$ then (iii) holds (Proposition~\ref{prop:psi3-case1}). Similarly, if (i) and (ii) hold and $\lw^3{X}$ is a non-trivial simple then (iii) holds (Proposition~\ref{prop:wedge3-case1}).
\item There is an important example. Let $\HH$ be the non-abelian group of order~21 and let $V$ be the induction of a non-trivial one-dimensional representation of the 7-Sylow. Then $V$ satisfies (i) and (ii), but not (iii). Thus (iii) cannot simply be omitted in the theorem. See \S \ref{ss:case3} for details. We will see that this example is really quite fundamental in some ways. For instance, we show (Theorem~\ref{thm:mod}) that it is essentially the unique example of moderate growth that satisfies (i) and (ii).
\item Suppose (i) and (ii) hold, but (iii) does not; also assume $\lw^4{X} \ne 0$, to exclude the above example. This is what we call ``Case~IIb'' in our analysis. We do not know if this case can actually occur. If it does, it would give an interesting new pre-Tannakian category. We explore this possibility a little in \S \ref{s:caseIIb}, but do not come to any definitive conclusion; this is an interesting direction for future work.
\end{itemize}
Of course, it would also be nice to relax the condition that $\lw^2{X}$ is simple. This seems to be a much more difficult problem that would require serious new ideas.

The theorem suggests some interesting questions:

\begin{question}
If $\cC$ is a semi-simple pre-Tannakian category such that all Adams operations are trivial on $\rK(\cC)$, does $\cC$ come from an oligomorphic group $(G, \Omega)$? If so, can the triviality of the Adams operations be translated into group-theoretic properties of $G$? E.g., must $G$ preserve a total order on $\Omega$?
\end{question}

\begin{question}
Suppose $G$ is an oligomorphic group that preserves a total order, and we have a regular measure $\mu$ giving rise to a semi-simple tensor category $\cC=\uRep(G, \mu)$. Are the Adams operations on $\rK(\cC)$ trivial? The two most immediate cases to investigate are the category associated to $\GG \wr \GG$  mentioned in \S \ref{ss:adams}, and the Delannoy tree category constructed by Kriz \cite{kriz2}.
\end{question}

\subsection{Outline of proofs}

Let $X$ be as in Theorem~\ref{thm:C}. The hypotheses on $X$ are ``character theoretic'' in the sense that they are at the level of the Grothendieck group. The key step in the proof is to promote this information to structure: we show that $X$ carries the structure of an o-Frobenius algebra (\S \ref{ss:key}). One version of the universal property property for $\cD$ now shows that there is a functor $\Phi \colon \cD \to \cC$ satisfying $\Phi(L_{\bb})=X$. This proves statement (a) from Theorem~\ref{thm:C}, and (b) is a simple corollary of it. Statements (c) and (d) then follow from \cite{discrete}.

Now let $X$ be as in Theorem~\ref{thm:A}. Since $\lw^3{X}$ is a non-trivial simple, it follows that $X$ is not a summand of $\bS_{(2,1)}(X)$ (Proposition~\ref{prop:wedge3-case1}). Thus Theorem~\ref{thm:C} applies, and gives us a functor $\Phi \colon \cD \to \cC$ as above. It is now a rather simple matter to conclude that $\Phi$ is fully faithful from the assumption that all exterior powers of $X$ are simple; since $X$ generates $\cC$, it follows that $\Phi$ is an equivalence.

We now discuss the proof of Theorem~\ref{thm:B}. Suppose we have a semi-ring isomorphism $i \colon \rK_+(\cD) \to \rK_+(\cC)$. Let $[X]$ be the class in $\rK_+(\cC)$ corresponding to $[L_{\bb}]$. We do not know a priori that $i$ is compatible with the $\lambda$-ring structures on the Grothendieck groups; however, using the very strong constraints present in this situation, we deduce that the class of $\lw^2{L_{\bb}}$ does indeed map to that of $\lw^2{X}$. From this, we are able to verify that the hypotheses of Theorem~\ref{thm:C} are fulfilled, and so we get a functor $\Phi$ as above. With $\Phi$ in hand, it is not difficult to show that $i$ is compatible with exterior powers, at least on $L_{\bb}$. This shows that all exterior powers of $X$ are simple, from which we conclude that $\Phi$ is an equivalence (the argument is similar to the proof of Theorem~\ref{thm:A} at this point).

\begin{remark}
The basic structure of the above arguments can be summarized as follows. Given an object $X$ in $\cC$ together with some assumptions on maps between small tensor powers of $X$, there is an induced functor $\Phi \colon \cD \to \cC$; if, moreover, $\rK(\cC) \cong \rK(\cD)$ then this functor is an equivalence. This structure of argument is typical for recognition theorems, and employed in those mentoned in Remark~\ref{rem:recognition}. One notable difference in the present case is that we actually make assumptions on maps between small Schur functors of $X$, not just small tensor powers; this is something that only makes sense to do for symmetric tensor categories.
\end{remark}

\subsection{Tensor category terminology} \label{ss:tencat}

We fix an algebraically closed field of characteristic $\ne 2,3,5,7$ for the duration of the paper. A \defn{tensor category} is an additive $k$-linear category equipped with a $k$-bilinear symmetric monoidal structure. We write $\bone$ for the unit object in a tensor category, and $\Gamma(X)=\Hom(\bone, -)$ for the invariants functor. A tensor category is \defn{pre-Tannakian} if (a) it is abelian and all objects have finite length; (b) all $\Hom$ spaces are finite dimensional over $k$; (c) it is rigid, i.e., all objects have duals; and (d) $\End(\bone)=k$. An object $X$ \defn{generates} a pre-Tannakian category if every object is a subquotient of a direct sum of tensor products of $X$ and its dual.

\subsection{Notation}

We list some of the important notation here:
\begin{description}[align=right,labelwidth=2.5cm,leftmargin=!]
\item[ $k$ ] the coefficient field (algebraically closed of characteristic $\ne 2,3,5,7$)
\item[ $\bone$ ] the unit object of a tensor category
\item[ $\Gamma$ ] the functor $\Hom(\bone, -)$ on a tensor category
\item[ $\psi^p$ ] the $p$th Adams operator
\item[ $\cC$ ] a semi-simple pre-Tannakian category
\item[ $\cD$ ] the Delannoy category
\item[ $\GG$ ] the oligomorphic group $\Aut(\bR, <)$ used to define $\cD$
\item[ $L_{\lambda}$ ] the simple object of $\cD$ corresponding to $\lambda$
\item[ $\HH$ ] the non-abelian group of order~21
\end{description}

\subsection*{Acknowledgements}

We thank Nate Harman, Mikhail Khovanov, and Dylan Thurston for helpful discussions.

\section{Adams fixed objects} \label{s:initial}

In this section, we study objects fixed by the second Adams operation. The reasoning employed in this section is fairly formal, in the sense that it mostly occurs at the level of the Grothendieck group; in \S \ref{ss:key}, we will employ finer reasoning and obtain sharper results. We fix a semi-simple pre-Tannakian category $\cC$ for the duration of \S \ref{s:initial}.

\subsection{First comments}

We will say that an object $X$ of $\cC$ is fixed by $\psi^2$ if its class $[X]$ in the Grothendieck group is. Since $\cC$ is semi-simple, $X$ is fixed by $\psi^2$ if and only if there is an isomorphism
\begin{displaymath}
\Sym^2{X} \cong \lw^2{X} \oplus X.
\end{displaymath}
This is equivalent to asking for an isomorphism
\begin{displaymath}
X^{\otimes 2} \cong (\lw^2{X})^{\oplus 2} \oplus X
\end{displaymath}
Typically, there will not be canonical such isomorphisms. If $X$ is fixed by $\psi^2$ then, from the above isomorphism, $[\lw^2{X}]$ is a polynomial in $[X]$ in $\rK(\cC) \otimes \bQ$, and so $\lw^2{X}$ is also fixed by $\psi^2$. The above isomorphism has one other simple implication.

\begin{proposition} \label{prop:self-dual-fixed}
If $X$ is a self-dual simple fixed by $\psi^2$ then $X=\bone$.
\end{proposition}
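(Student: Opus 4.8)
The plan is to compute the multiplicity of $\bone$ as a summand of $X^{\otimes 2}$ in two different ways and extract a parity contradiction unless $X \cong \bone$.

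First I would record that, because $X$ is simple and self-dual, $\bone$ occurs in $X^{\otimes 2}$ with multiplicity \emph{exactly} one. Indeed, rigidity gives $\Hom(\bone, X \otimes X) \cong \Hom(\bone, X \otimes X^*) \cong \Hom(X, X)$, and the endomorphism ring of a simple object is a finite-dimensional division algebra over the algebraically closed field $k$, hence equal to $k$; since $\cC$ is semi-simple this dimension count is precisely the multiplicity of $\bone$ in $X^{\otimes 2}$.

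Next I would feed in the hypothesis that $X$ is fixed by $\psi^2$, in the form of the isomorphism $X^{\otimes 2} \cong (\lw^2{X})^{\oplus 2} \oplus X$ recorded just above the proposition. Let $m \ge 0$ be the multiplicity of $\bone$ in $\lw^2{X}$ and let $\delta$ be the multiplicity of $\bone$ in $X$; since $X$ is simple, $\delta \in \{0,1\}$, with $\delta = 1$ exactly when $X \cong \bone$. Comparing with the previous step yields $2m + \delta = 1$, whose only solution in non-negative integers is $m = 0$ and $\delta = 1$. Hence $X \cong \bone$.

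The argument is short, so I do not anticipate a genuine obstacle. The one point worth a sentence of care is the first step: self-duality together with simplicity forces the multiplicity of $\bone$ in $X^{\otimes 2}$ to be \emph{equal} to one (not merely at most one), and it is exactly this odd value colliding with the manifestly even contribution $2m$ that drives the proof.
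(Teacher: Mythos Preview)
Your proof is correct and follows the same idea as the paper's one-line argument: the paper simply notes that $X^{\otimes 2}$ contains a unique copy of $\bone$, which can only occur in the $X$ summand (since the $(\lw^2 X)^{\oplus 2}$ contribution is even). You have spelled out precisely this parity observation with full justification.
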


\begin{proof}
$X^{\otimes 2}$ contains a unique copy of $\bone$, which can only occur in $X$.
\end{proof}

\subsection{Schur functors}

We now examine how Schur functors behave on Adams fixed objects. We begin in degree three. In what follows, we write $=$ for isomorphisms.

\begin{proposition} \label{prop:sym3}
If $X$ is fixed by $\psi^2$ then
\begin{displaymath}
\Sym^3{X} = \lw^3{X} \oplus (\lw^2{X})^{\oplus 2} \oplus X
\end{displaymath}
\end{proposition}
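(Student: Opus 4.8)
The plan is to reduce everything to an identity in the Grothendieck group and then invoke semisimplicity. Since $\cC$ is semisimple, an isomorphism of objects is the same thing as an equality of classes in $\rK(\cC)$, so it suffices to prove $[\Sym^3{X}] = [\lw^3{X}] + 2[\lw^2{X}] + [X]$ in $\rK(\cC)$.

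I would use the standard branching identities for Schur functors in degree $\le 3$, which hold in any symmetric tensor category over a field of characteristic $0$ or $>3$ (in particular over $k$): the decomposition $X^{\otimes 2} = \Sym^2{X} \oplus \lw^2{X}$; the two Pieri rules $\Sym^2{X} \otimes X = \Sym^3{X} \oplus \bS_{(2,1)}{X}$ and $\lw^2{X} \otimes X = \lw^3{X} \oplus \bS_{(2,1)}{X}$; and, as a consequence of Schur--Weyl duality for $\GL$, $X^{\otimes 3} = \Sym^3{X} \oplus (\bS_{(2,1)}{X})^{\oplus 2} \oplus \lw^3{X}$.

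Now bring in the hypothesis. By definition $\psi^2([X]) = [\Sym^2{X}] - [\lw^2{X}]$, so the assumption $\psi^2([X]) = [X]$ is precisely the relation $[\Sym^2{X}] = [\lw^2{X}] + [X]$; adding this to $[X]^2 = [\Sym^2{X}] + [\lw^2{X}]$ gives $[X]^2 = 2[\lw^2{X}] + [X]$. Subtracting the two Pieri identities in $\rK(\cC)$ yields $([\Sym^2{X}] - [\lw^2{X}])\cdot[X] = [\Sym^3{X}] - [\lw^3{X}]$, whose left-hand side equals $[X]\cdot[X] = [X]^2 = 2[\lw^2{X}] + [X]$. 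This is exactly the identity we want, and semisimplicity promotes it to the claimed direct-sum decomposition. (Alternatively, one can expand $X^{\otimes 3} = X^{\otimes 2} \otimes X$ using $X^{\otimes 2} \cong (\lw^2{X})^{\oplus 2} \oplus X$ and the exterior Pieri rule, and cancel against the Schur--Weyl decomposition of $X^{\otimes 3}$; this is slightly longer but avoids the symmetric Pieri rule.)

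There is no genuine obstacle here: the argument is formal bookkeeping in a $\lambda$-ring. The only two points worth a sentence of justification are (a) that the Pieri/branching identities hold as honest direct-sum decompositions of objects, which is where the restriction on the characteristic is used (it makes $k\fS_3$ semisimple, so the degree-$\le 3$ representation theory of $\GL$ transports to $\cC$), and (b) the final cancellation of $\bS_{(2,1)}{X}$ and one copy of $\lw^3{X}$, which is legitimate because $\cC$ is semisimple, so that matching classes of sums of simples forces the summands to match.
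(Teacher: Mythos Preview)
Your argument is correct and is essentially the same as the paper's: both tensor the relation $\Sym^2 X \cong \lw^2 X \oplus X$ with $X$, apply the Pieri rules, and cancel the common copy of $\bS_{(2,1)}(X)$ (you phrase this as subtracting the two Pieri identities in $\rK(\cC)$, which amounts to the same thing). The alternative you sketch at the end is also just a reorganization of the same computation.
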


\begin{proof}
Since $X$ is fixed by $\psi^2$, we have
\begin{displaymath}
\Sym^2{X} = \lw^2{X} \oplus X.
\end{displaymath}
Now tensor with $X$ and apply the Pieri rule. We find
\begin{displaymath}
\Sym^3{X} \oplus \bS_{(2,1)}(X) = \lw^3{X} \oplus \bS_{(2,1)}(X) \oplus X^{\otimes 2}.
\end{displaymath}
The result now follows upon canceling the $\bS_{(2,1)}(X)$'s.
\end{proof}

Recall the following well-known plethysms \cite[\S 8, Example~6]{Macdonald}:
\begin{align*}
\lw^2{\lw^2{X}} &= \bS_{(2,1,1)}(X) &
\lw^2{\Sym^2{X}} &= \bS_{(3,1)}(X) \\
\Sym^2{\lw^2{X}} &= \lw^4{X} \oplus \bS_{(2,2)}(X) &
\Sym^2{\Sym^2{X}} &= \Sym^4{X} \oplus \bS_{(2,2)}(X).
\end{align*}
We now examine degree four Schur functors.

\begin{proposition} \label{prop:deg4}
Suppose $X$ is fixed by $\psi^2$. Then:
\begin{align*}
\lw^4{X} \oplus \bS_{(2,2)}(X) &= \bS_{(2,1,1)}(X) \oplus \lw^2{X} \\
\bS_{(3,1)}(X) &= \bS_{(2,1,1)}(X) \oplus \lw^3{X} \oplus \bS_{(2,1)}(X) \oplus \lw^2{X} \\
\Sym^4{X} &= \lw^4{X} \oplus \lw^3{X} \oplus \bS_{(2,1)}(X) \oplus \lw^2{X} \oplus X.
\end{align*}
\end{proposition}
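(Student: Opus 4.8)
The plan is to derive all three identities from the single relation $\Sym^2 X = \lw^2 X \oplus X$ (the defining consequence of $X$ being $\psi^2$-fixed, recorded in \S\ref{s:initial}), together with the four listed plethysms, the Pieri rule $\lw^2 X \otimes X = \lw^3 X \oplus \bS_{(2,1)}(X)$, and the elementary splittings $\Sym^2(A \oplus B) = \Sym^2 A \oplus (A \otimes B) \oplus \Sym^2 B$ and $\lw^2(A \oplus B) = \lw^2 A \oplus (A\otimes B) \oplus \lw^2 B$. Since $\cC$ is semisimple, hence Krull--Schmidt, common summands may be cancelled freely throughout.

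\textbf{Second and third identities.} For the second identity I would apply $\lw^2$ to both sides of $\Sym^2 X = \lw^2 X \oplus X$. The left-hand side is $\lw^2 \Sym^2 X = \bS_{(3,1)}(X)$, while the right-hand side splits as $\lw^2 \lw^2 X \oplus (\lw^2 X \otimes X) \oplus \lw^2 X$, which by the plethysm $\lw^2 \lw^2 X = \bS_{(2,1,1)}(X)$ and the Pieri rule equals $\bS_{(2,1,1)}(X) \oplus \lw^3 X \oplus \bS_{(2,1)}(X) \oplus \lw^2 X$. Comparing gives the claim. For the third identity I would instead apply $\Sym^2$: the left-hand side is $\Sym^2 \Sym^2 X = \Sym^4 X \oplus \bS_{(2,2)}(X)$, and the right-hand side is $\Sym^2 \lw^2 X \oplus (\lw^2 X \otimes X) \oplus \Sym^2 X = \bigl(\lw^4 X \oplus \bS_{(2,2)}(X)\bigr) \oplus \bigl(\lw^3 X \oplus \bS_{(2,1)}(X)\bigr) \oplus \bigl(\lw^2 X \oplus X\bigr)$, again using one plethysm, Pieri, and the original relation. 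Cancelling the common copy of $\bS_{(2,2)}(X)$ yields the formula.

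\textbf{First identity.} Here the key observation is that $\lw^2 X$ is itself fixed by $\psi^2$ (noted just before the proposition, since $[\lw^2 X]$ is a polynomial in $[X]$). Feeding $\lw^2 X$ into the $\psi^2$-fixed relation gives $\Sym^2 \lw^2 X = \lw^2 \lw^2 X \oplus \lw^2 X$, which by the plethysms $\Sym^2 \lw^2 X = \lw^4 X \oplus \bS_{(2,2)}(X)$ and $\lw^2 \lw^2 X = \bS_{(2,1,1)}(X)$ is exactly $\lw^4 X \oplus \bS_{(2,2)}(X) = \bS_{(2,1,1)}(X) \oplus \lw^2 X$.

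\textbf{On the difficulty.} There is essentially no obstacle: the argument is pure bookkeeping with symmetric-function identities in the semisimple Grothendieck group. The only mildly non-obvious step is recognizing, for the first identity, that one should substitute $\lw^2 X$ (rather than $X$) into the $\psi^2$-fixed relation; everything else is forced.
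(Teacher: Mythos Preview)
Your proof is correct and follows essentially the same approach as the paper: the paper derives the first identity by applying the $\psi^2$-fixed relation to $\lw^2 X$, and the second and third by applying $\lw^2$ and $\Sym^2$ respectively to $\Sym^2 X = \lw^2 X \oplus X$, expanding via the direct-sum splittings, the listed plethysms, and Pieri, with the same cancellation of $\bS_{(2,2)}(X)$ in the third. The only difference is the order of presentation.
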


\begin{proof}
Since $\lw^2{X}$ is fixed by $\psi^2$, we have
\begin{displaymath}
\Sym^2{\lw^2{X}} =\lw^2\lw^2{X} \oplus \lw^2{X}.
\end{displaymath}
Using the plethysm for the two compositions above yields the first formula. Next, we have
\begin{displaymath}
\lw^2{\Sym^2{X}} = \lw^2(\lw^2{X} \oplus X) = \lw^2(\lw^2{X}) \oplus (X \otimes \lw^2{X}) \oplus \lw^2{X}.
\end{displaymath}
Using the plethysms for the two compositions and the Pieri rule, yields the second formula. Finally, we have
\begin{displaymath}
\Sym^2(\Sym^2{X}) = \Sym^2(\lw^2{X} \oplus X) = \Sym^2(\lw^2{X}) \oplus (X \otimes \lw^2{X}) \oplus \Sym^2{X}.
\end{displaymath}
Using the plethsysms for the two compositions and the Pieri rule yields the third formula. (Note: we must cancel $\bS_{(2,2)}(X)$ from either side.)
\end{proof}

\begin{remark} \label{rmk:psi3}
If we only assume that $X$ is fixed by $\psi^2$ then there is not much we can say about $\bS_{(2,1)}(X)$. However, if $X$ is fixed by $\psi^2$ and $\psi^3$ then we have an isomorphism
\begin{displaymath}
\bS_{(2,1)}(X) = (\lw^3{X})^{\oplus 2} \oplus (\lw^2{X})^{\oplus 2}.
\end{displaymath}
Indeed, this follow from Proposition~\ref{prop:sym3} and the definition of $\psi^3$:
\begin{displaymath}
\psi^3([X]) = [\Sym^3{X}] - [\bS_{(2,1)}(X)] + [\lw^3{X}].
\end{displaymath}
In general, if $X$ is fixed by $\psi^{\ell}$ for all $\ell \le n$, and $n!$ is invertible in $k$, then all Schur functors of $X$ up to degree $n$ decompose into exterior powers. See \cite[\S 8.5]{line} for details.
\end{remark}

\subsection{Moderate growth}

We say that $X$ has \defn{moderate growth} if the length of $X^{\otimes n}$ grows at most exponentially with $n$; otherwise, $X$ has \defn{super-exponential growth}. If $\lw^n{X}=0$ for some $n$ then $X$ has moderate growth. See \cite[\S 4]{CEO} for this fact, and other results about moderate growth. We require the following result about Adams fixed objects of moderate growth.

\begin{proposition} \label{prop:moderate}
Suppose $X$ generates $\cC$, is fixed by $\psi^2$, and has moderate growth. Then $\cC$ is equivalent to $\Rep(\Gamma)$ where $\Gamma$ is a finite group scheme over $k$.
\end{proposition}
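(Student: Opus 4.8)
The plan is to combine the structure theory of moderate-growth tensor categories with the constraints coming from $\psi^2[X]=[X]$. First I would invoke the classification of pre-Tannakian categories of moderate growth: over a field of characteristic $0$ this is Deligne's theorem, and in positive characteristic it is the Coulembier--Etingof--Ostrik extension with a Verlinde category $\mathrm{Ver}_{p^n}$ playing the role of $\sVec$. Either way, since $X$ generates $\cC$, one obtains a symmetric fiber functor and hence a description $\cC \cong \Rep(G,\epsilon)$ with $G$ an affine super-group scheme of finite type and $\epsilon \in G(k)$ central of order dividing $2$; semi-simplicity of $\cC$ forces $G$ to be (linearly) reductive. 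The Proposition amounts to showing that $G$ is a finite \emph{ordinary} group scheme.

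The only information I would extract from the hypothesis is that $\psi^2[X]=[X]$: by the identity $\psi^2[X]=[\Sym^2 X]-[\lw^2 X]$ and the standard trace formulas for $\Sym^2$ and $\lw^2$ (together with the fact that characters separate virtual objects for $G$ reductive), this is equivalent to $\chi_X(g^2)=\chi_X(g)$ for every point $g$ of $G$, where $\chi_X$ is the categorical character; iterating gives $\chi_X(g^{2^n})=\chi_X(g)$ for all $n\ge 1$. I would exploit this twice. On a maximal torus $T$ of $G_{\bar 0}$: writing $\chi_X|_T=\sum_\mu c_\mu e^\mu$ in the group ring of the character lattice $\Lambda$ of $T$, the relation forces $c_{2^n\mu}=c_\mu$ (and $c_\nu=0$ off $2^n\Lambda$) for all $n$, and since no nonzero element of $\Lambda$ is infinitely $2$-divisible this yields $c_\mu=0$ for $\mu\ne 0$; thus $\chi_X$ is constant on $T$. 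On the coset $T\epsilon$: since $\epsilon$ is central of order dividing $2$ one has $(t\epsilon)^2=t^2\in T$, while $t\epsilon$ acts as $t$ on the even part and $-t$ on the odd part of $X$, so $\chi_X(t\epsilon)$ equals the \emph{ordinary} trace of $t$ on $X$; the relation $\chi_X((t\epsilon)^2)=\chi_X(t\epsilon)$ then shows this ordinary trace is also constant on $T$. Comparing values at $t=1$ forces the odd part of $X$ to vanish, and then constancy of the (now ordinary) character forces $T$ to act trivially on $X$.

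Since $X$ generates $\cC$, every object of $\cC$ is purely even and $T$ acts trivially on all of $\cC$; hence $\cC$ is Tannakian, $\cC \cong \Rep(\tilde G)$ for an affine group scheme $\tilde G$ of finite type, and the maximal torus of $\tilde G$ acts trivially. A linearly reductive group scheme with trivial maximal torus is finite, so $\tilde G$ is a finite group scheme and $\cC\cong\Rep(\tilde G)$, proving the Proposition. In positive characteristic the character computations are unchanged, but the structure theorem only supplies a fiber functor to $\mathrm{Ver}_{p^n}$, so one must additionally show that a $\psi^2$-fixed generator forces this functor to factor through $\Vec$ — equivalently, that the only $\psi^2$-fixed objects of $\mathrm{Ver}_{p^n}$ are multiples of $\bone$ — after which the Tannakian argument applies verbatim; alternatively this case can plausibly be handled through the work of Czenky. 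I expect this last point, the interaction of $\psi^2$ with the Verlinde categories, to be the main obstacle; the torus computation, though the most explicit step, is a routine manipulation in the weight-lattice group ring.
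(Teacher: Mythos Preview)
Your approach is correct and closely parallels the paper's proof: both invoke Deligne/CEO to obtain a fiber functor, use a weight-lattice argument on tori to force the torus to act trivially, and then appeal to the structure of linearly reductive groups (reductive identity component in characteristic~0, Nagata's theorem in positive characteristic) to conclude finiteness. The main difference is the order of operations. The paper first shows that the image of $X$ under the fiber functor lies in $\Vec$---directly for $\sVec$, and by citing Czenky's result \cite[Corollary~4.4]{Czenky} for $\mathrm{Ver}_p$---and only then reconstructs an ordinary affine group scheme and runs the $\bG_m$-exclusion argument. You instead carry the super structure along and eliminate the odd part via your $T\epsilon$-coset computation; this is a bit more elaborate but perfectly valid in characteristic~0. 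You correctly identify the positive-characteristic reduction to $\Vec$ as the remaining obstacle, and your suggestion to handle it via Czenky is exactly what the paper does. One small point: your opening sentence packages the conclusion as $\cC\cong\Rep(G,\epsilon)$ for a super-group before you have established that the fiber functor lands in $\sVec$ rather than $\mathrm{Ver}_p$, which you yourself note at the end; it would be cleaner to separate the two characteristics from the outset, as the paper does.
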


\begin{proof}
If $k$ has characteristic~0 then there is a fiber functor $\cC \to \sVec$ by a theorem of Deligne \cite{Deligne2}. If $k$ has positive characteristic $p$ there is a fiber functor $\cC \to \mathrm{Ver}_p$ by the main theorem of \cite{CEO}. Since the image of $X$ is fixed by $\psi^2$, it belongs to $\Vec$; this is easy to see directly in the characteristic~0 case, and is \cite[Corollary~4.4]{Czenky} in characteristic $p$. As $X$ generates $\cC$, the entire fiber functor takes values in $\Vec$. Thus $\cC$ is the representation category of an affine group scheme by Tannakian reconstruction.

Fix an equivalence $\cC=\Rep(\Gamma)$ where $\Gamma$ is an affine group scheme over $k$, and identify $X$ with a representation of $\Gamma$. Note that $X$ is a faithful representation of $\Gamma$ since $X$ generates $\cC$; in particular, $\Gamma$ is of finite type over $k$. Suppose we have a closed subgroup $\bG_m$ of $\Gamma$. The restriction of $X$ to $\bG_m$ decomposes into weights, which are indexed by $\bZ$. The operator $\psi^2$ acts on weights by multiplication by~2, and so the set of weights that appear is stable under multiplication by~2. Since there are only finitely many weights appearing in $X$, it follows that the only weight can be~0, meaning $\bG_m$ acts trivially. This contradicts faithfulness. We thus see that $\Gamma$ contains no $\bG_m$.

Now, $\Rep(\Gamma)=\cC$ is semi-simple. In characteristic~0, this implies that the identity component of $\Gamma$ is a reductive group \cite[Ch.~IV, Prop~3.3]{DemazureGabriel}. Since a non-trivial reductive group contains a copy of $\bG_m$, we see that the identity component of $\Gamma$ is trivial, and so $\Gamma$ is finite over $k$. In positive characteristic, semi-simplicity of $\Rep(\Gamma)$ implies that the identity component of $\Gamma$ is a diagonalizable group (Nagata's theorem) \cite[Ch.~IV, Prop.~3.6]{DemazureGabriel}. A diagonalizable group containing no $\bG_m$ is finite, and so once again $\Gamma$ is finite.
\end{proof}

Czenky \cite[Theorem~4.9]{Czenky} showed that if $\cC$ has finitely many simples and $\psi^2$ acts trivially on $\rK(\cC)$ then $\cC$ is the category of vector spaces. The above proposition shows that ``finitely many simples'' can be relaxed to ``moderate growth.''

\begin{corollary} \label{cor:moderate}
If $\cC$ has moderate growth (i.e., every object of $\cC$ has moderate growth) and $\psi^2$ acts trivially on $\rK(\cC)$ then $\cC$ is the category of vector spaces.
\end{corollary}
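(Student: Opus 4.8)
The plan is to reduce to the case of a finitely generated category and then combine Proposition~\ref{prop:moderate} with Czenky's theorem quoted above. The only real obstacle is that $\cC$ need not have a single generating object, so Proposition~\ref{prop:moderate} cannot be applied to it directly; instead I would run the argument one object at a time inside suitable tensor subcategories.

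First I would fix an arbitrary object $X$ of $\cC$ and let $\cC_X \subseteq \cC$ be the full subcategory consisting of all subquotients of finite direct sums of tensor products of $X$ and its dual $X^\vee$. Since $\cC$ is semi-simple, every subquotient of an object of $\cC$ is a direct summand, so $\cC_X$ is closed under direct summands and hence under kernels, cokernels, tensor products, and duals; it therefore inherits from $\cC$ the structure of a semi-simple pre-Tannakian category, and by construction $X$ is a generating object of $\cC_X$. Moreover $X$ has moderate growth in $\cC_X$, because the length of $X^{\otimes n}$ does not depend on whether it is computed in $\cC_X$ or in $\cC$.

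Next I would observe that the inclusion $\cC_X \hookrightarrow \cC$ induces an injection of Grothendieck groups $\rK(\cC_X) \hookrightarrow \rK(\cC)$, the simple objects of $\cC_X$ forming a subset of those of $\cC$, and that this injection intertwines $\psi^2$: indeed $\psi^2([Y]) = [\Sym^2{Y}] - [\lw^2{Y}]$, and the Schur functors of an object of $\cC_X$ again lie in $\cC_X$. Hence the hypothesis that $\psi^2$ is trivial on $\rK(\cC)$ forces $\psi^2$ to be trivial on $\rK(\cC_X)$; in particular $[X]$ is fixed by $\psi^2$. Now Proposition~\ref{prop:moderate} applies to $\cC_X$ with generating object $X$, producing an equivalence $\cC_X \simeq \Rep(\Gamma_X)$ for some finite group scheme $\Gamma_X$ over $k$; in particular $\cC_X$ has only finitely many simple objects. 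Since $\psi^2$ acts trivially on $\rK(\cC_X)$, Czenky's theorem \cite[Theorem~4.9]{Czenky} then shows that $\cC_X$ is the category of vector spaces, so $X$ is a direct sum of copies of $\bone$. As $X$ was arbitrary, every object of $\cC$ is a sum of copies of $\bone$, which is the assertion.

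The step requiring genuine care is the reduction itself: checking that $\cC_X$ is indeed a pre-Tannakian subcategory to which Proposition~\ref{prop:moderate} applies, and that the Adams operation on $\rK(\cC_X)$ is the restriction of the one on $\rK(\cC)$. Both are routine given semi-simplicity, but they are precisely what allows one to bypass the possible absence of a single generator for all of $\cC$; everything after that is a direct appeal to Proposition~\ref{prop:moderate} and \cite[Theorem~4.9]{Czenky}.
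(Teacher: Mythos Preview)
Your proposal is correct and follows essentially the same approach as the paper's proof: pass to the tensor subcategory generated by a single object, apply Proposition~\ref{prop:moderate} to conclude it has finitely many simples, and then invoke Czenky's theorem. Your write-up is simply more explicit about verifying that $\cC_X$ inherits the pre-Tannakian structure and the triviality of $\psi^2$, which the paper leaves implicit.
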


\begin{proof}
Let $X$ be an object of $\cC$, and let $\cC'$ be the tensor subcategory it generates. The proposition implies that $\cC'$ has finitely many simple objects, and is thus the category of vector spaces by Czenky's theorem. Thus $X$ is isomorphic to $\bone^{\oplus n}$ for some $n$. Since $X$ is arbitrary, the result follows.
\end{proof}

\subsection{A trichotomy} \label{ss:tri}

Let $X$ be an object of $\cC$ satisfying the following two conditions:
\begin{enumerate}[(i)]
\item $X$ is fixed by $\psi^2$.
\item $X$ and $\lw^2{X}$ are simple.
\end{enumerate}
We require some understanding of the implications of these assumptions to prove the main results of this paper. It is useful to separate three cases:
\begin{description}[align=right,labelwidth=2.25cm,leftmargin=!, font=\normalfont]
\item[Case I] $\lw^2{X}$ is not a summand of $X \otimes X^*$ and $\lw^4{X} \ne 0$.
\item[Case II] $\lw^2{X}$ is a summand of $X \otimes X^*$ and $\lw^4{X} \ne 0$.
\item[Case III] $\lw^4{X}=0$.
\end{description}
Clearly, we are in exactly one of the three cases. Case~III is exceptional, and studied in \S \ref{ss:case3}. We now focus on the first two cases. The following propositions constitute our initial analysis.

\begin{proposition} \label{prop:case1}
Suppose $X$ belongs to Case~I. Then:
\begin{enumerate}
\item No two of the five simple objects $\bone$, $X$, $X^*$, $\lw^2{X}$, and $\lw^2{X^*}$ are isomorphic.
\item We have a decomposition
\begin{displaymath}
X \otimes X^* = Y_1 \oplus Y_2 \oplus X \oplus X^* \oplus \bone,
\end{displaymath}
where $Y_1$ and $Y_2$ are simple, not isomorphic to each other, and not isomorphic to any of the five simples in (a). Moreover, $Y_1$ and $Y_2$ are either duals of each other or both self-dual.
\item $X$ is not a summand of $\lw^3{X}$ or $\bS_{(2,1)}(X)$.
\end{enumerate}
\end{proposition}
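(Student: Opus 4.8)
The plan is to argue entirely at the level of multiplicities in the semisimple category $\cC$, the only inputs being the isomorphism $X^{\otimes 2} = (\lw^2{X})^{\oplus 2} \oplus X$ furnished by hypothesis (i), the two Case~I conditions, the standard duality adjunctions, and Proposition~\ref{prop:self-dual-fixed}. The three parts are proved in order — (a), then (b), then (c) — since each uses the previous.

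For (a), the observation to exploit is that $\bone$, $X$, and $X^*$ are \emph{always} summands of $X \otimes X^*$: one has $\dim\Hom(\bone, X \otimes X^*) = \dim\End(X) = 1$ and $\dim\Hom(X, X \otimes X^*) = \dim\Hom(X^{\otimes 2}, X) \ge 1$ because $X$ occurs in $X^{\otimes 2}$, and the statement for $X^*$ follows since $X \otimes X^*$ is self-dual. The Case~I hypothesis that $\lw^2{X}$ is not a summand of $X \otimes X^*$ — hence neither is $\lw^2{X^*} = (\lw^2{X})^*$, again by self-duality — then immediately gives $\lw^2{X}, \lw^2{X^*} \notin \{\bone, X, X^*\}$. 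The remaining inequalities are formal: $X \ne \bone$ since $\lw^2{X}$ is a nonzero simple; and $X \cong X^*$ or $\lw^2{X} \cong \lw^2{X^*}$ would exhibit a self-dual simple fixed by $\psi^2$ (recall $\lw^2{X}$ is fixed by $\psi^2$), forcing it to be $\bone$, against the above.

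For (b), the heart of the matter is the identity $\dim\End(X \otimes X^*) = 5$. Applying the adjunction to write $\End(X \otimes X^*) \cong \Hom(X^{\otimes 2} \otimes X^*, X)$ and substituting $X^{\otimes 2} = (\lw^2{X})^{\oplus 2} \oplus X$ reduces this to $2\dim\Hom(\lw^2{X}, X^{\otimes 2}) + \dim\Hom(X, X^{\otimes 2})$; reading off multiplicities from the decomposition of $X^{\otimes 2}$, and using $\lw^2{X} \ne X$ from (a), the two terms are $2$ and $1$, so the total is $5$. Since $\dim\End(X \otimes X^*)$ is the sum of the squares of the multiplicities of the simple summands of $X \otimes X^*$, and $\bone, X, X^*$ already contribute $1 + 1 + 1$, the remaining simples contribute $2 = 1^2 + 1^2$; there are therefore exactly two further simple summands $Y_1 \ne Y_2$, each occurring once. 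They lie outside the list in (a) because $\bone, X, X^*$ have their full multiplicity accounted for and $\lw^2{X}, \lw^2{X^*}$ do not occur at all, and self-duality of $X \otimes X^*$ forces $\{Y_1, Y_2\}$ to be stable under dualization, which is the final clause.

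For (c), I would show $X$ is not a summand of $X \otimes \lw^2{X}$; since $\lw^3{X}$ and $\bS_{(2,1)}(X)$ are the two summands of $X \otimes \lw^2{X}$ by the Pieri rule, this is enough. The multiplicity of $X$ in $X \otimes \lw^2{X}$ equals that of $\bone$ in $X^* \otimes X \otimes \lw^2{X}$; feeding in the decomposition of $X^* \otimes X$ from (b), each of the resulting summands $\lw^2{X}$, $X \otimes \lw^2{X}$, $X^* \otimes \lw^2{X}$, $Y_1 \otimes \lw^2{X}$, $Y_2 \otimes \lw^2{X}$ contains no copy of $\bone$, precisely by the inequalities of (a) together with $Y_i^* \in \{Y_1, Y_2\}$ and the disjointness of $\{Y_1, Y_2\}$ from $\{\lw^2{X}, \lw^2{X^*}\}$ established in (b). I do not anticipate a serious obstacle: the work is bookkeeping, and the one place demanding care is the computation $\dim\End(X \otimes X^*) = 5$, where the adjunctions must be applied correctly and the already-proved fact $\lw^2{X} \ne X$ must be in hand before one starts.
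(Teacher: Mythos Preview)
Your proof is correct. The argument for parts (b) and (c) follows the paper's approach closely; for (c) the paper writes it as the single adjunction
\[
0 = \Hom(X \otimes X^*, \lw^2{X}) = \Hom(X, \lw^3{X} \oplus \bS_{(2,1)}(X)),
\]
using the Case~I hypothesis directly on the left, which is your multiplicity-of-$\bone$ computation rolled into one line.

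Where you genuinely differ from the paper is in part (a). The paper proves (a) by a lemma that handles Cases~I and~II simultaneously (assuming only $\lw^4{X}\ne 0$), and its case analysis for the comparisons $X\cong\lw^2{X}$ and $X^*\cong\lw^2{X}$ passes through dimension computations and the degree-4 Schur functor identities of Proposition~\ref{prop:deg4}. You instead invoke the Case~I hypothesis right away: since $\bone$, $X$, $X^*$ are summands of $X\otimes X^*$ while $\lw^2{X}$ and $\lw^2{X^*}$ are not, the latter two are immediately excluded from $\{\bone,X,X^*\}$, and the remaining distinctions follow from Proposition~\ref{prop:self-dual-fixed}. This is shorter and avoids the degree-4 calculations entirely. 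What you lose is generality: the paper's lemma is reused verbatim for Case~II in Proposition~\ref{prop:case2}, whereas your argument is specific to Case~I.
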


\begin{proposition} \label{prop:case2}
Suppose $X$ belongs to Case~II. Then:
\begin{enumerate}
\item No two of the five simple objects $\bone$, $X$, $X^*$, $\lw^2{X}$, and $\lw^2{X^*}$ are isomorphic.
\item We have a decomposition
\begin{displaymath}
X \otimes X^* = \lw^2{X} \oplus \lw^2{X^*} \oplus X \oplus X^* \oplus \bone.
\end{displaymath}
In particular, $X$ has dimension $-1$.
\item $X$ appears with multiplicity one in $\lw^3{X}$ or $\bS_{(2,1)}(X)$, and with multiplicity zero in the other.
\end{enumerate}
\end{proposition}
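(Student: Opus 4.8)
\emph{Proof proposal.} The bulk of the work is part~(a): once the five simple objects $\bone, X, X^*, \lw^2{X}, \lw^2{X^*}$ are shown pairwise non-isomorphic, parts~(b) and~(c) are a matter of counting multiplicities. For part~(a) the plan is to run through the possible isomorphisms among these five objects, using duality to cut the list in half. Several coincidences are immediate: $X = \bone$ or $X^* = \bone$ would force $\lw^2{X} = 0$, contradicting its simplicity; $X = X^*$ is ruled out by Proposition~\ref{prop:self-dual-fixed}, and $\lw^2{X} = \lw^2{X^*}$ by the same proposition applied to $\lw^2{X}$ (which is again fixed by $\psi^2$, as noted in \S\ref{s:initial}); and $\lw^2{X} = \bone$ — hence also $\lw^2{X^*} = \bone$ — is impossible, since it would give $\bone$ multiplicity $\ge 2$ in $X^{\otimes 2} = (\lw^2{X})^{\oplus 2} \oplus X$, whereas $\dim\Hom(\bone, X^{\otimes 2}) = \dim\Hom(X^*, X) = 0$ because $X \ne X^*$.

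This leaves the two substantive coincidences $\lw^2{X} = X$ and $\lw^2{X} = X^*$, which are mutually exclusive given $X \ne X^*$. For both I would exploit the duality isomorphism $\End(X \otimes X^*) = \End(X^{\otimes 2})$ together with $X^{\otimes 2} = (\lw^2{X})^{\oplus 2} \oplus X$. If $\lw^2{X} = X$ then $X^{\otimes 2} = X^{\oplus 3}$, so $\dim\End(X \otimes X^*) = 9$; but $X \otimes X^*$ plainly contains $\bone$ with multiplicity $1$ and $X$ with multiplicity $3$, forcing $\dim\End(X \otimes X^*) \ge 10$ — a contradiction. The case $\lw^2{X} = X^*$ is the one I expect to be the real obstacle. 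Comparing dimensions gives $\binom{\dim X}{2} = \dim X$, so $\dim X \in \{0,3\}$; to rule out $\dim X = 0$ I would invoke the fact that a nonzero simple object of a semi-simple pre-Tannakian category has nonzero dimension (since $\bone$ appears in $X \otimes X^*$ with multiplicity exactly one, the coevaluation $\bone \to X \otimes X^*$ and evaluation $X \otimes X^* \to \bone$ span the relevant one-dimensional $\Hom$-spaces, so their composite, which equals $\dim X$, is a nonzero scalar). Hence $\dim X = 3$. Then $\bS_{(2,1,1)}(X) = \lw^2{\lw^2{X}} = \lw^2{X^*} = X$, so the first identity of Proposition~\ref{prop:deg4} becomes $\lw^4{X} \oplus \bS_{(2,2)}(X) = X \oplus X^*$; since $\lw^4{X} \ne 0$ — this is exactly where the Case~II hypothesis enters — $\lw^4{X}$ must be $X$, $X^*$, or $X \oplus X^*$, each of dimension $3$ or $6$, contradicting $\dim\lw^4{X} = \binom{3}{4} = 0$. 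This establishes part~(a).

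Granting~(a), part~(b) becomes a multiplicity-and-dimension count. In $X \otimes X^*$ the simples $\bone$, $X$, $X^*$ each occur with multiplicity $1$ (for $X$, note $\dim\Hom(X^{\otimes 2}, X) = 1$ since $\lw^2{X} \ne X$), while $\lw^2{X}$ occurs with some multiplicity $b \ge 1$ by the Case~II hypothesis, and $\lw^2{X^*}$ with the same $b$ since $X \otimes X^*$ is self-dual; hence $\dim\End(X \otimes X^*) \ge 3 + 2b^2$. But $\End(X \otimes X^*) = \End(X^{\otimes 2}) = \End\big((\lw^2{X})^{\oplus 2} \oplus X\big)$ has dimension $5$, so $b = 1$ and there is no other summand, i.e.\ $X \otimes X^* = \bone \oplus X \oplus X^* \oplus \lw^2{X} \oplus \lw^2{X^*}$; comparing dimensions then yields $(\dim X)^2 = (\dim X)^2 + \dim X + 1$, so $\dim X = -1$. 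For part~(c), the Pieri rule gives $X \otimes \lw^2{X} = \lw^3{X} \oplus \bS_{(2,1)}(X)$, so the combined multiplicity of $X$ in $\lw^3{X}$ and in $\bS_{(2,1)}(X)$ equals $\dim\Gamma(X^* \otimes X \otimes \lw^2{X})$; substituting the decomposition of $X \otimes X^*$ from~(b) and evaluating $\Gamma$ on each of the five resulting tensor factors using~(a), one finds that only $\lw^2{X^*} \otimes \lw^2{X}$ contributes a copy of $\bone$, and it contributes exactly one. Thus $X$ appears with multiplicity one in one of $\lw^3{X}$, $\bS_{(2,1)}(X)$ and with multiplicity zero in the other. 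The only step that requires real care is the $\lw^2{X} = X^*$ branch of part~(a), together with its small supporting input that simple objects have nonzero dimension.
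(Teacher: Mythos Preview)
Your proof is correct and follows the same overall route as the paper. The paper organizes part~(a) and the skeleton of part~(b) as two lemmas shared between Cases~I and~II (first the pairwise non-isomorphism of the five simples, then a decomposition $X\otimes X^* = Y_1\oplus Y_2\oplus X\oplus X^*\oplus\bone$ with $Y_1,Y_2$ simple and either mutually dual or both self-dual), and then specializes; your argument is tailored directly to Case~II, which is fine and slightly more streamlined for~(b). Part~(c) is identical in substance: the paper writes it as $\dim\Hom(X\otimes X^*,\lw^2 X)=1$, you as $\dim\Gamma(X^*\otimes X\otimes\lw^2 X)=1$.

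The one place you genuinely diverge is the sub-case $\lw^2 X = X$ in part~(a). The paper argues via $\dim X=3$, the plethysm $\bS_{(2,1,1)}(X)=\lw^2\lw^2 X = X$, and Proposition~\ref{prop:deg4}, ultimately needing $\lw^4 X\ne 0$ to reach a contradiction. Your endomorphism-dimension count (if $X^{\otimes 2}=X^{\oplus 3}$ then $\dim\End(X\otimes X^*)=9$, yet $\bone\oplus X^{\oplus 3}\subset X\otimes X^*$ forces $\dim\End\ge 10$) is shorter, avoids the plethysm machinery, and does not use $\lw^4 X\ne 0$ for this particular coincidence. The remaining sub-case $\lw^2 X=X^*$ is handled the same way in both proofs.
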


Based on the above proposition, it is useful to subdivide Case~II:
\begin{description}[align=right,labelwidth=2.5cm,leftmargin=!, font=\normalfont]
\item[Case IIa] $X$ is a summand of $\lw^3{X}$.
\item[Case IIb] $X$ is a summand of $\bS_{(2,1)}{X}$.
\end{description}
We will eventually see that Case IIa is impossible, and that in Case~I the objects $Y_1$ and $Y_2$ are dual to each other and $\dim{X}=-1$ (see \S \ref{ss:thmC}). We do not know if Case~IIb is possible; see \S \ref{s:caseIIb} for further discussion.

We now prove the propositions, which will take the remainder of \S \ref{ss:tri}. We assume $\lw^4{X}$ is non-zero in what follows, i.e., we are in Case~I or Case~II.

\begin{lemma}
No two of $\bone$, $X$, $X^*$, $\lw^2{X}$, and $\lw^2{X^*}$ are isomorphic.
\end{lemma}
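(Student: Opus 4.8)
The plan is a pair-by-pair check, with the work halved by the observation that the duality functor $(-)^*$ is an equivalence interchanging $X\leftrightarrow X^*$ and $\lw^2{X}\leftrightarrow\lw^2{X^*}$ while fixing $\bone$; it therefore suffices to rule out six of the ten pairs, the rest following by dualizing. The ``formal'' pairs go quickly. First, $X\not\cong\bone$ (and dually $X^*\not\cong\bone$), since $\lw^2{X}$ is simple, hence nonzero, while $\lw^2{\bone}=0$. Next, $X\not\cong X^*$: otherwise $X$ is a self-dual simple fixed by $\psi^2$, so $X=\bone$ by Proposition~\ref{prop:self-dual-fixed}, a contradiction. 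Consequently $\lw^2{X}\not\cong\bone$ (and dually $\lw^2{X^*}\not\cong\bone$): were $\lw^2{X}\cong\bone$, then $\bone$ would be a direct summand of $X^{\otimes 2}=\Sym^2{X}\oplus\lw^2{X}$, giving $\Hom(X^*,X)\cong\Hom(\bone,X^{\otimes 2})\neq 0$ and hence $X\cong X^*$. Finally $\lw^2{X}\not\cong\lw^2{X^*}$: an isomorphism here would make $\lw^2{X}$ a self-dual simple, and it is fixed by $\psi^2$ (recall $\lw^2{X}$ is fixed by $\psi^2$ whenever $X$ is), so Proposition~\ref{prop:self-dual-fixed} would again force $\lw^2{X}=\bone$.

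The remaining pairs are $\{X,\lw^2{X}\}$ and $\{X,\lw^2{X^*}\}$, and excluding them is where the standing hypothesis $\lw^4{X}\neq 0$ enters. Suppose $\lw^2{X}$ is isomorphic to $X$ or to $X^*$; in either case it is a simple object of dimension $d:=\dim X$ (note $\dim X^*=\dim X$), so $\dim\lw^2{X}=\binom{d}{2}$ yields $\binom{d}{2}=d$, i.e.\ $d(d-3)=0$ in $k$. A nonzero simple object of a semi-simple pre-Tannakian category has nonzero categorical dimension (immediate from the snake identity, since $\bone$ occurs as a direct summand of $X\otimes X^*$ with multiplicity one), so $d\neq 0$ and hence $d=3$. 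Now $\lw^4{X}$ is a direct summand of $\lw^2{X}\otimes\lw^2{X}$: indeed $\lw^2{X}\otimes\lw^2{X}=\Sym^2{\lw^2{X}}\oplus\lw^2{\lw^2{X}}$, and $\Sym^2{\lw^2{X}}=\lw^4{X}\oplus\bS_{(2,2)}(X)$ by the plethysm recalled before Proposition~\ref{prop:deg4}. Since $\lw^2{X}\cong X$ or $X^*$, the product $\lw^2{X}\otimes\lw^2{X}$ is $X^{\otimes 2}$ or $(X^*)^{\otimes 2}$; and as $X^{\otimes 2}\cong(\lw^2{X})^{\oplus 2}\oplus X$ has all simple constituents in $\{X,X^*\}$ — and likewise for its dual $(X^*)^{\otimes 2}$ — every simple constituent of $\lw^2{X}\otimes\lw^2{X}$ is isomorphic to $X$ or to $X^*$, hence has dimension $3$. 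Therefore $\dim\lw^4{X}$ is a non-negative multiple of $3$. But $\dim\lw^4{X}=\binom{3}{4}=0$, so $\lw^4{X}=0$, contradicting the standing assumption that we are in Case~I or Case~II.

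I expect the only genuinely non-formal step to be this last one: the pairs involving $\bone$ and the pair $\{X,X^*\}$ all fall to rigidity and Proposition~\ref{prop:self-dual-fixed}, whereas excluding $X\cong\lw^2{X}$ (equivalently $X^*\cong\lw^2{X^*}$) and $X\cong\lw^2{X^*}$ (equivalently $X^*\cong\lw^2{X}$) requires first pinning down $\dim X=3$ from $\dim\lw^2{X}=\binom{d}{2}$, and then playing this against $\lw^4{X}\neq 0$.
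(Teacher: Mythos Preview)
Your proof is correct and follows essentially the same approach as the paper: the ``formal'' pairs (a)--(d) are dispatched identically via Proposition~\ref{prop:self-dual-fixed}, and for the remaining pairs you obtain $\dim X=3$ from $\binom{d}{2}=d$ and then force $\lw^4{X}=0$ by dimension, exactly as the paper does. The only variation is in packaging the last step: the paper invokes the identity $\lw^4{X}\oplus\bS_{(2,2)}(X)=\bS_{(2,1,1)}(X)\oplus\lw^2{X}$ from Proposition~\ref{prop:deg4} (together with $\bS_{(2,1,1)}(X)=\lw^2\lw^2{X}$), whereas you embed $\lw^4{X}$ directly in $(\lw^2{X})^{\otimes 2}$ via the plethysm $\Sym^2\lw^2{X}=\lw^4{X}\oplus\bS_{(2,2)}(X)$ --- these are two sides of the same coin. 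One small point worth making explicit in positive characteristic: from ``$\dim\lw^4{X}$ is $3m$ for some $m\ge 0$'' and ``$3m=0$ in $k$'' you need the bound $m\le 3$ (which you have, since $(\lw^2{X})^{\otimes 2}$ has length~$3$) together with $\mathrm{char}(k)\ge 11$ to conclude $m=0$; the paper's version has the same implicit step with $m\le 2$.
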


\begin{proof}
It suffices to treat the following pairs:
\begin{center}
\begin{tabular}{lll}
(a) $\bone$ and $X$ & (b) $X$ and $X^*$ & (c) $\bone$ and $\lw^2{X}$ \\
(d) $\lw^2{X}$ and $\lw^2{X^*}$ & (e) $X$ and $\lw^2{X}$ & (f) $X^*$ and $\lw^2{X}$
\end{tabular}
\end{center}
We go through each in turn.

(a) If $X=\bone$ then $\lw^2{X}=0$, a contradiction.

(b) If $X$ were self-dual then $X=\bone$ (Proposition~\ref{prop:self-dual-fixed}), contradicting (a).

(c) If $\lw^2{X}=\bone$ then $X$ would be self-dual, contradicting (b).

(d) If $\lw^2{X}$ were self-dual then $\lw^2{X}=\bone$ (Proposition~\ref{prop:self-dual-fixed}), contradicting (c).

(e) Suppose $X = \lw^2{X}$. This implies $\dim{X}=3$. We have
\begin{displaymath}
X = \lw^2{X} = \lw^2 \lw^2{X} = \bS_{(2,1,1)}(X).
\end{displaymath}
Thus by Proposition~\ref{prop:deg4}, we find
\begin{displaymath}
\lw^4{X} \oplus \bS_{(2,2)}(X) = X \oplus \lw^2{X}.
\end{displaymath}
Now, both terms on the right are simple of dimension~3. Since $\lw^4{X}$ has dimension~0, it follows that $\lw^4{X}=0$, contradicting our running assumption.

(f) Suppose $X^* = \lw^2{X}$. This implies $\dim{X}=3$. We have
\begin{displaymath}
X = \lw^2{X^*} = \lw^2 \lw^2{X} = \bS_{(2,1,1)}(X).
\end{displaymath}
Then the rest of the argument proceeds identically to (e).
\end{proof}

\begin{lemma}
We have a decomposition
\begin{displaymath}
X \otimes X^* = Y_1 \oplus Y_2 \oplus X \oplus X^* \oplus \bone,
\end{displaymath}
where $Y_1$ and $Y_2$ are simple, and no two simples on the right are isomorphic. Moreover, $Y_1$ and $Y_2$ are either duals of each other or both self-dual.
\end{lemma}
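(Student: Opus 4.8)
The plan is to determine $X\otimes X^*$ completely by combining a multiplicity count for its ``obvious'' constituents $\bone$, $X$, $X^*$ with the single global invariant $\dim\End(X\otimes X^*)$, which I claim equals $5$.

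First I would record the easy multiplicities. As $X$ is simple, $\dim\Hom(\bone,X\otimes X^*)=\dim\End(X)=1$, so $\bone$ occurs exactly once in $X\otimes X^*$. By adjunction $\Hom(X,X\otimes X^*)\cong\Hom(X^{\otimes2},X)$, and since $X^{\otimes2}=(\lw^2{X})^{\oplus2}\oplus X$ with $\lw^2{X}\not\cong X$ (by the previous lemma), this space has dimension $1$; hence $X$ occurs exactly once. The object $X\otimes X^*$ is self-dual, so taking duals of its class in $\rK(\cC)$ shows $X^*$ also occurs exactly once; and $\bone$, $X$, $X^*$ are pairwise non-isomorphic by the previous lemma. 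By semisimplicity we may therefore write $X\otimes X^*=\bone\oplus X\oplus X^*\oplus W$, where $W$ is a direct sum of simples, none of which is $\bone$, $X$, or $X^*$.

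The key step is the value of $\dim\End(X\otimes X^*)$. I would obtain this from the observation that, in any symmetric tensor category, $\dim\End(A\otimes B)=\dim\End(A\otimes B^*)$ for all objects $A$, $B$: both sides equal $\dim\Hom(\bone,\,A\otimes A^*\otimes B\otimes B^*)$, the two expressions differing only by a permutation of tensor factors. With $A=B=X$ this gives $\dim\End(X\otimes X^*)=\dim\End(X^{\otimes2})$. Since $X^{\otimes2}=(\lw^2{X})^{\oplus2}\oplus X$ is a sum of the non-isomorphic simples $\lw^2{X}$ and $X$, we get $\dim\End(X^{\otimes2})=\dim\End((\lw^2{X})^{\oplus2})+\dim\End(X)=4+1=5$ (the off-diagonal $\Hom$'s vanishing since $\lw^2{X}\not\cong X$).

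To finish, recall that in a semisimple category in which every simple object has endomorphism algebra $k$, $\dim\End(M)$ equals the sum of the squares of the multiplicities of the simple constituents of $M$. Applying this to $X\otimes X^*$ and subtracting the contribution $1+1+1$ of $\bone$, $X$, $X^*$ forces $\dim\End(W)=2$; as the constituent multiplicities of $W$ are non-negative integers whose squares sum to $2$, exactly two distinct simples occur in $W$, each with multiplicity one. Calling them $Y_1$, $Y_2$ yields the stated decomposition, with $Y_1\not\cong Y_2$ and neither isomorphic to $\bone$, $X$, or $X^*$ by the construction of $W$. For the last assertion, duality permutes the simple constituents of the self-dual object $X\otimes X^*$, fixing $\bone$ and interchanging $X$ and $X^*$, hence preserves the pair $\{Y_1,Y_2\}$; so either $Y_1$ and $Y_2$ are both self-dual, or $Y_1^*\cong Y_2$. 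The one genuinely non-routine point is recognizing that $\dim\End(X\otimes X^*)$ can be computed through $X^{\otimes2}$; once that identity is in hand, everything else is bookkeeping with the previous lemma and semisimplicity.
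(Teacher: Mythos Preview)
Your proof is correct and follows essentially the same approach as the paper: both compute $\dim\End(X\otimes X^*)=\dim\End(X^{\otimes2})=5$ via adjunction, observe that $\bone$, $X$, $X^*$ each occur once, and then use self-duality to finish. Your write-up is simply more explicit about the sum-of-squares bookkeeping that the paper leaves to the reader.
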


\begin{proof}
From the decomposition $X^{\otimes 2} = X \oplus (\lw^2{X})^{\oplus 2}$ and the fact that $X$ and $\lw^2{X}$ are non-isomorphic, we see that $\End(X^{\otimes 2})$ is five dimensional. Thus $\End(X \otimes X^*)$ is also five dimensional, since it has the same dimension of $\End(X^{\otimes 2})$ by adjunction. Now, $X$ appears in $X^{\otimes 2}$, and so, by adjunction, $X$ appears in $X \otimes X^*$. By duality, $X^*$ also appears in $X \otimes X^*$. We thus have a decomposition
\begin{displaymath}
X \otimes X^* = Z \oplus X \oplus X^* \oplus \bone.
\end{displaymath}
for some object $Z$. Since $X \otimes X^*$ is self-dual and its endomorphism algebra is five dimensional, the result now follows.
\end{proof}

\begin{proof}[Proof of Proposition~\ref{prop:case1}]
We have already proved (a). Since $X$ belongs to Case~I, $\lw^2{X}$ is not a summand of $X \otimes X^*$. Since $X \otimes X^*$ is self-dual, it follows that $\lw^2{X^*}$ is not a summand either. Thus $Y_1$ and $Y_2$ are not isomorphic to any of the five simples in (a), which completes the proof of (b). By adjunction and the Pieri rule, we have
\begin{displaymath}
0 = \Hom(X \otimes X^*, \lw^2{X}) = \Hom(X, \lw^3{X} \oplus \bS_{(2,1)}(X)),
\end{displaymath}
and so $X$ does not occur in $\lw^3{X}$ or $\bS_{(2,1)}(X)$.
\end{proof}

\begin{proof}[Proof of Proposition~\ref{prop:case2}]
We have already proved (a). Since $X$ belongs to Case~II, $\lw^2{X}$ is a summand of $X \otimes X^*$. Relabeling if necessary, we can assume $Y_1=\lw^2{X}$. Since $\lw^2{X}$ is not self-dual, we must have $Y_2=Y_1^*$. This gives the decomposition occurring in (b), from which $\dim{X}=-1$ follows. Finally, as in the previous proof we have
\begin{displaymath}
\Hom(X \otimes X^*, \lw^2{X}) = \Hom(X, \lw^3{X} \oplus \bS_{(2,1)}(X)).
\end{displaymath}
Since the left side has dimension~1, part (c) follows.
\end{proof}

\subsection{Case~III} \label{ss:case3}

Let $\HH$ be the non-abelian group of order~21. This group is generated by elements $a$ and $b$ of orders~3 and~7 satisfying the relation $aba^{-1}=b^2$. The group $\HH$ has five conjugacy classes, represented by 1, $a$, $a^2$, $b$, and $b^3$. The character table of $\HH$ is given in Figure~\ref{fig:char-table}.

Let $V$ be one of the  three dimensional irreducible representations of $\HH$; the other one is the dual $V^*$ of $V$. The representation $\lw^3{V}$ is trivial: indeed, the character table shows that $V$ is the regular representation of the subgroup $\langle a \rangle$, and so $\lw^3{V}$ is the trivial representation of $\langle a \rangle$, and thus all of $\HH$. Thus $\lw^2{V} \cong V^*$ is irreducible. Since $b$ and $b^2$ are conjugate, we see directly from the character table that $V$ is fixed by $\psi^2$. Thus $V$ belongs to Case~III. There is an automorphism of $\HH$ that fixes $a$ and takes $b$ to $b^3$. This induces an auto-equivalence of the tensor category $\Rep(\HH)$ that switches $V$ and $V^*$.

\begin{figure}[!t]
\begin{displaymath}
\begin{array}{l|ccccc}
& 1 & a & a^2 & b & b^3 \\
\hline
\chi_1 & 1 & 1 & 1 & 1 & 1 \\
\chi_2 & 1 & \omega & \omega^2 & 1 & 1 \\
\chi_3 & 1 & \omega^2 & \omega & 1 & 1 \\
\chi_4 & 3 & 0 & 0 & z & z' \\
\chi_5 & 3 & 0 & 0 & z' & z \\
\end{array}
\end{displaymath}
\caption{The character table of $\HH$. Here $\omega$ is a primitive 3rd root of unity, and $z=\zeta+\zeta^2+\zeta^4 $ and $z'=\zeta^3+\zeta^5+\zeta^6$, where $\zeta$ is a primitive 7th root of unity.}
\label{fig:char-table}
\end{figure}

We now show that this is essentially the unique instance of Case~III.

\begin{proposition} \label{prop:case3}
Suppose $X$ generates $\cC$ and belongs to Case~III. Then there is an equivalence of tensor categories $\Phi \colon \Rep(\HH) \to \cC$ such that $\Phi(V)=X$.
\end{proposition}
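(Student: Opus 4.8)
\emph{The plan} is to first reach the representation category of a finite group via the moderate-growth hypothesis, and then identify that group with $\HH$ by a direct group-theoretic argument.

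\emph{From the category to a finite group.} Since $\lw^4 X = 0$, the object $X$ has moderate growth, so Proposition~\ref{prop:moderate} provides an equivalence $\cC \simeq \Rep(\Gamma)$ for a finite group scheme $\Gamma$ over $k$. I claim $\Gamma$ is \'etale, hence an ordinary finite group $G$. This is automatic in characteristic $0$; if $\operatorname{char} k = p$, then (as in the proof of Proposition~\ref{prop:moderate}) the connected component $\Gamma^0$ is diagonalizable, so Cartier-dual to a finite abelian $p$-group $M$, and restricting $X$ to $\Gamma^0$ exhibits it as an $M$-graded vector space whose support generates $M$ (by faithfulness of $X$) and is stable under multiplication by $2$ (this is how $\psi^2$ acts on the grading). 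If $M \ne 0$, the multiplication-by-$2$ orbit of a nonzero element of the support has size at least $\operatorname{ord}_p(2)$, and the hypothesis $p \notin \{2,3,5,7\}$ forces $\operatorname{ord}_p(2) \notin \{1,2,3\}$, whence $\dim X \ge 4$, contradicting $\lw^4 X = 0$. So $\Gamma = G$ is a finite group; since $\Rep_k(G)$ is semisimple we have $\operatorname{char} k \nmid |G|$, and nothing below is affected by taking $k = \bC$. From now on $X$ is a faithful irreducible representation of a finite group $G$, with character $\chi$ satisfying $\chi(g^2) = \chi(g)$ for all $g$, and $\lw^2 X$ is irreducible.

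\emph{$G$ has order $21$.} A few reductions pin this down. First $\dim X = 3$: we have $2 \le \dim X \le 3$ (the lower bound because $\lw^2 X \ne 0$, the upper because $\lw^4 X = 0$), and $\dim X = 2$ is impossible, since there $\Sym^2 X \cong \lw^2 X \oplus X$ forces $\det X \cong \bone$ after taking top exterior powers, hence $X$ is self-dual, hence $X = \bone$ by Proposition~\ref{prop:self-dual-fixed}, absurd. Next, $|G|$ is odd, since an element $g$ of order $2$ would satisfy $\chi(g) = \chi(g^2) = \chi(1) = 3 = \dim X$ and so act trivially, forcing $g = 1$. Also $Z(G) = 1$, since a nontrivial central element acts by a root of unity $\zeta$ with $\zeta^2 = \zeta$ (from $\chi(z^2) = \chi(z)$), i.e.\ $\zeta = 1$. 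Now take a minimal normal subgroup $N \trianglelefteq G$; by Feit--Thompson it is elementary abelian. Clifford theory writes $X|_N$ as $e$ copies of a single $G$-orbit $\{\theta_1, \dots, \theta_t\}$ of linear characters with $et = 3$; the case $(e,t) = (3,1)$ makes $N$ act by scalars, hence $N \le Z(G) = 1$, impossible, so $(e,t) = (1,3)$ and $X \cong \Ind_H^G \lambda$ for $H := \operatorname{Stab}_G(\theta_1)$ of index $3$ and $\lambda$ a linear character of $H$. As $3$ is the least prime dividing $|G|$, $H$ is normal; pick $\sigma$ with $G/H = \langle \bar\sigma \rangle \cong \bZ/3$. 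From $X|_H = \lambda \oplus \lambda^{\sigma} \oplus \lambda^{\sigma^2}$ being faithful, $H$ embeds into a product of three cyclic groups and so is abelian. Writing $\widehat H$ additively and $\mu_i = \lambda^{\sigma^i}$, these characters are distinct (irreducibility of $X$), $\sigma$ cyclically permutes them, and evaluating $\chi(n^2) = \chi(n)$ on $n \in H$ together with linear independence of characters shows that multiplication by $2$ permutes $\{\mu_0, \mu_1, \mu_2\}$. It fixes no $\mu_i$ (that would force $\mu_i = 0$, which $\sigma$ cannot cyclically permute), so it is a $3$-cycle; after reindexing $\mu_1 = 2\mu_0$, $\mu_2 = 4\mu_0$, and $8\mu_0 = \mu_0$, so $\mu_0$ has order $7$. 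Faithfulness of $X$ forces $\widehat H = \langle \mu_0 \rangle \cong \bZ/7$, hence $H \cong \bZ/7$, and $\sigma$ acts on $\widehat H$ by multiplication by $2$ (or $4$), an automorphism of order $3$; therefore $G \cong \bZ/7 \rtimes \bZ/3 = \HH$. Under this isomorphism, $X$ is the induction of a nontrivial character of $\bZ/7$; the two $\langle\sigma\rangle$-orbits of such characters yield the two three-dimensional irreducibles $V$ and $V^*$ of $\HH$, interchanged by the automorphism of $\HH$ that fixes $a$ and sends $b$ to $b^3$ (discussed just before the proposition). Composing $\cC \simeq \Rep(\HH)$ with the induced auto-equivalence of $\Rep(\HH)$ if necessary, we obtain a tensor equivalence $\Phi \colon \Rep(\HH) \to \cC$ with $\Phi(V) = X$.

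\emph{Where the difficulty lies.} Two points carry the real content. One is the \'etaleness argument in the first step: the precise list of forbidden characteristics is exactly what prevents $X$ from acquiring a four-dimensional ``weight orbit'' over an infinitesimal subgroup. The other is the passage to a monomial representation via Clifford theory (equivalently, ruling out the primitive case); once $X$ is known to be induced from a linear character, the multiplication-by-$2$ argument closes things out mechanically. A shorter but less self-contained route to the monomiality would invoke the classification of finite odd-order primitive subgroups of $\GL_3(\bC)$ and exclude the extraspecial group $3^{1+2}$ by noting that its center acts by nontrivial scalars, contradicting the $\psi^2$-fixedness.
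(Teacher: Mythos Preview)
Your proof is correct, but the route diverges from the paper's after the common first step of reducing to a faithful irreducible $3$-dimensional representation of a finite group.

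The paper extends the multiplication-by-$2$ weight argument (which you use only for the infinitesimal part) to \emph{all} cyclic subgroups of $\Gamma$: if $g$ has prime order $\ell$, the characters of $\langle g\rangle$ appearing in $X$ form a nonempty mult-by-$2$-stable subset of $\bZ/\ell$ of size at most~$3$, forcing $\ell\in\{3,7\}$. From this Cauchy gives $|\Gamma|=3^a7^b$, then a direct look at the $3$-Sylow (regular restriction) and the (necessarily normal) $7$-Sylow pins down $\Gamma\cong\HH$. Your argument instead shows $|G|$ odd and $Z(G)=1$, invokes Feit--Thompson to get an elementary-abelian minimal normal subgroup, and then runs Clifford theory to exhibit $X$ as monomial, after which the mult-by-$2$ analysis on the three inducing characters identifies $H\cong\bZ/7$ and $G\cong\HH$.

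Both are valid; the trade-off is that the paper's argument is entirely elementary, whereas yours imports the odd-order theorem (a very heavy hammer for a group that turns out to have order~$21$) in exchange for a clean structural path through Clifford theory. If you want to keep your Clifford-theoretic shape but drop Feit--Thompson, note that your own weight argument, applied to \'etale cyclic subgroups, already gives $|G|=3^a7^b$, and then Burnside's $p^aq^b$ theorem yields solvability with far less overhead. Your determinant computation ruling out $\dim X=2$ is a pleasant alternative to the paper's divisibility argument and stands on its own.
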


\begin{proof}
Since $\lw^4{X}=0$, we see that $X$ has moderate growth. By Proposition~\ref{prop:moderate}, we thus have an equivalence $\cC=\Rep(\Gamma)$ where $\Gamma$ is a finite group scheme over $k$, and we identify $X$ with a representation of $\Gamma$ in what follows. Note that $X$ is a faithful representation of $\Gamma$ since $X$ generates $\cC$. Also, since $\lw^2{X}$ is simple but $\lw^4{X}$ vanishes, it follows that $X$ has dimension two or three.

Suppose $\mu_{\ell^r}$ is a closed subgroup of $\Gamma$, where $\ell$ is a prime\footnote{Note that if $\ell$ is different from the characteristic then $\mu_{\ell^r} \cong \bZ/\ell^r \bZ$, as $k$ is algebraically closed.}. The restriction of $X$ to $\mu_{\ell^r}$ decomposes into weights, which are indexed by $\bZ/\ell^r \bZ$; note that at least one weight has order $\ell^r$ since $X$ is faithful. The operator $\psi^2$ acts on weights by multiplication by~2, and so the set of weights that appear is stable under multiplication by~2. It follows that the order of~2 in $(\bZ/\ell^r \bZ)^{\times}$ is either~2 or~3, and so $\ell^r$ is either~3 or~7. In particular, if $k$ has positive characteristic $p$, there is no $\mu_p$ in $\Gamma$ (since we have excluded characteristics~3 and~7), and so the identity component of $\Gamma$ is trivial by Nagata's theorem \cite[Ch.~IV, Prop.~3.6]{DemazureGabriel}. It follows that $\Gamma$ is an ordinary finite group in which all elements have order dividing~21. Since the dimension of $X$ divides the order of $\Gamma$, we see that $X$ is three dimensional.

Let $H$ be a 3-Sylow subgroup of $\Gamma$. If $h \in H$ is non-trivial (and thus of order~3) then the above argument shows that the restriction of $X$ to $\langle h \rangle \cong \bZ/3\bZ$ is necessarily the regular representation. It now follows from character theory that $X$ is a direct sum of copies of the regular representation of $H$. We thus see that $H$ is either trivial or of order~3.

The third Sylow theorem now shows that $\Gamma$ has a unique (and thus normal) 7-Sylow subgroup $N$. Any irreducible representation of $N$ of dimension $<7$ is one dimensional. Thus, as a representation of $N$, we have a decomposition $X=L_1 \oplus L_2 \oplus L_3$, where the $L_i$'s are one dimensional. Put $L=L_1$. Since $X$ is fixed by $\psi^2$, we must have $L_2=L^{\otimes 2}$ and $L_3=L^{\otimes 4}$ (up to relabeling). Since $X$ is a faithful representation of $N$, it follows that $L$ is a faithful representation of $N$, and so $N$ is cyclic. Since every element of $N$ has order at most~7, it follows that $N$ has order at most~7.

Since $X$ is an irreducible three dimensional representation, $\Gamma$ cannot be abelian. We thus see that $\Gamma$ is isomorphic to $\HH$. Choosing an isomorphism, we obtain an equivalence $\Phi \colon \Rep(\HH) \to \cC$. Since $X$ is a three dimensional simple, we must have $\Phi(V)=X$ or $\Phi(V^*)=X$. In the latter case, we can pre-compose with an auto-equivalence of $\Rep(\HH)$ to put ourselves in the first case.
\end{proof}

\begin{corollary} \label{cor:case3}
Suppose $X$ belongs to Case~III. Then $\bS_{(2,1)}(X)$ contains $X$.
\end{corollary}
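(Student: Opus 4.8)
The plan is to reduce the statement to a computation inside $\Rep(\HH)$ and then read the answer off the character table. First I would pass to the tensor subcategory $\cC'$ of $\cC$ generated by $X$. Since $\cC$ is semi-simple, $\cC'$ is again a semi-simple pre-Tannakian category; the objects $X$, $\lw^2{X}$, $\lw^4{X}$, $X\otimes X^*$ and $\bS_{(2,1)}(X)$ all lie in $\cC'$ and coincide there with their counterparts in $\cC$, so $X$ still belongs to Case~III relative to $\cC'$, and ``$\bS_{(2,1)}(X)$ contains $X$'' holds in $\cC'$ if and only if it holds in $\cC$. Hence there is no loss in assuming that $X$ generates $\cC$, and Proposition~\ref{prop:case3} then identifies $\cC$ with $\Rep(\HH)$ and $X$ with the three-dimensional irreducible $V$.

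Next I would compute $\Hom(X,\bS_{(2,1)}(X))$ using the same adjunction argument as in the proofs of Propositions~\ref{prop:case1} and~\ref{prop:case2}. The Pieri rule gives $X\otimes\lw^2{X}=\lw^3{X}\oplus\bS_{(2,1)}(X)$, hence
\begin{displaymath}
\Hom(X\otimes X^*,\lw^2{X})=\Hom(X,\lw^3{X})\oplus\Hom(X,\bS_{(2,1)}(X)).
\end{displaymath}
As noted at the start of \S\ref{ss:case3}, $\lw^3{V}$ is the trivial representation and $\lw^2{V}\cong V^*$; since $X=V\ne\bone$, the summand $\Hom(X,\lw^3{X})$ vanishes, so it remains only to check that $\lw^2{X}=V^*$ is a constituent of $X\otimes X^*=V\otimes V^*$. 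This is immediate from Figure~\ref{fig:char-table}: using $zz'=2$, the representation $V\otimes V^*$ has character $(9,0,0,2,2)$ on the classes $1,a,a^2,b,b^3$, and its inner product with $\chi_{V^*}$ equals $\tfrac{1}{21}\bigl(27+6z+6z'\bigr)=\tfrac{1}{21}(27-6)=1$. Therefore $\Hom(X,\bS_{(2,1)}(X))$ is one-dimensional, and since $X$ is simple and $\cC$ is semi-simple, $X$ is a direct summand of $\bS_{(2,1)}(X)$.

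(If one prefers, the character of $\bS_{(2,1)}(V)$ can instead be computed directly from $s_{(2,1)}=\tfrac13(p_1^3-p_3)$, i.e. $\chi_{\bS_{(2,1)}(V)}(g)=\tfrac13\bigl(\chi_V(g)^3-\chi_V(g^3)\bigr)$; using $z^2+z+2=0$, hence $z^3=2-z$, one obtains $\chi_{\bS_{(2,1)}(V)}=(8,-1,-1,1,1)$, whose inner product with $\chi_V$ is again $1$.)

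I do not anticipate any genuine obstacle here: once Proposition~\ref{prop:case3} is in hand the corollary is a finite verification, and the only point requiring a word of care is the harmless reduction to the case that $X$ generates $\cC$, which is what makes Proposition~\ref{prop:case3} applicable. This corollary is the precise sense in which the Case~III example fails hypothesis~(iii) of Theorem~\ref{thm:C}.
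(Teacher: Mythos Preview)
Your proof is correct and follows essentially the same route as the paper: reduce to $\Rep(\HH)$ with $X=V$ via Proposition~\ref{prop:case3}, use the Pieri decomposition $V\otimes\lw^2 V=\lw^3 V\oplus\bS_{(2,1)}(V)$ together with $\lw^2 V=V^*$ and $\lw^3 V=\bone$, and then check that $V$ (equivalently $V^*$) appears in $V\otimes V^*$. The paper simply asserts the full decomposition $V\otimes V^*=V\oplus V^*\oplus\bone\oplus\chi_2\oplus\chi_3$ where you compute the single relevant multiplicity via the character inner product, and you are more explicit than the paper about the harmless reduction to the subcategory generated by $X$.
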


\begin{proof}
It suffices to prove this for $\cC=\Rep(\HH)$ and $X=V$. We have
\begin{displaymath}
\lw^2 V = V^* \qquad
V \otimes V^* = V \oplus V^* \oplus \bone \oplus \chi_2 \oplus \chi_3
\end{displaymath}
where $\chi_2$ and $\chi_3$ are the non-trivial one-dimensional representations. Thus
\begin{displaymath}
\bS_{(2,1)}(V) \oplus \lw^3 V = V \otimes \lw^2 V = V \oplus V^* \oplus \bone \oplus \chi_2 \oplus \chi_3.
\end{displaymath}
Since $V$ does not occur in $\lw^3{V}=\bone$, the result follows.
\end{proof}

\subsection{Futher analysis} \label{ss:further}

We now give some criteria to differentiate our three cases.

\begin{proposition} \label{prop:psi3-case1}
Let $X$ satisfy (i) and (ii) from \S \ref{ss:tri}. Suppose additionally that $X$ is fixed by $\psi^3$. Then $X$ belongs to Case~I.
\end{proposition}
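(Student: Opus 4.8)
The goal is to show that if $X$ satisfies (i) and (ii) from \S\ref{ss:tri} and is additionally fixed by $\psi^3$, then we are in Case~I, i.e., $\lw^2{X}$ is not a summand of $X\otimes X^*$ and $\lw^4{X}\ne 0$. The plan is to rule out Cases~II and~III directly using the extra information that $\psi^3$ fixes $X$.

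First I would invoke Remark~\ref{rmk:psi3}: since $X$ is fixed by both $\psi^2$ and $\psi^3$, we have the isomorphism
\begin{displaymath}
\bS_{(2,1)}(X) = (\lw^3{X})^{\oplus 2} \oplus (\lw^2{X})^{\oplus 2}.
\end{displaymath}
Now I would use this to eliminate Cases~II and~III via part (c) of Propositions~\ref{prop:case2} and the analogous statement for Case~III (Corollary~\ref{cor:case3}). In Case~III, Corollary~\ref{cor:case3} says $X$ is a summand of $\bS_{(2,1)}(X)$; combining with the $\psi^3$ formula, $X$ would have to be isomorphic to $\lw^2{X}$ or $\lw^3{X}$. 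But in Case~III, $\lw^4{X}=0$ and $\lw^2{X}$ is simple, so $X$ is $2$- or $3$-dimensional; the identification $X\cong\lw^3{X}$ would force $X$ one-dimensional (impossible, since then $\lw^2{X}=0$), and $X\cong\lw^2{X}$ is ruled out by the Lemma after Proposition~\ref{prop:case2} statements (the case-(e) argument there shows $X\cong\lw^2{X}$ forces $\lw^4{X}=0$ \emph{and} a dimension-$3$ contradiction — actually it's fine in Case~III since $\lw^4 X=0$, so I need to be a bit more careful here: in Case~III the dimension is $2$ or $3$, and $X=\lw^2 X$ forces $\dim X=3$, then run the Proposition~\ref{prop:deg4} computation to get a contradiction as in part (e) of that Lemma). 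So Case~III is out.

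For Case~II: part (c) of Proposition~\ref{prop:case2} says $X$ appears with multiplicity exactly one in $\lw^3{X}$ or in $\bS_{(2,1)}(X)$, and with multiplicity zero in the other. If $X$ appears (with multiplicity one) in $\bS_{(2,1)}(X)$, then by the $\psi^3$ formula $X$ is a summand of $(\lw^3{X})^{\oplus 2}\oplus(\lw^2{X})^{\oplus 2}$, so $X\cong\lw^2{X}$ or $X\cong\lw^3{X}$; both are excluded — the first by the Lemma (case (e)) after Proposition~\ref{prop:case2}, the second because $X\cong\lw^3 X$ with $\lw^2 X$ simple forces $\dim X=1$ hence $\lw^2 X=0$, contradiction. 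If instead $X$ appears in $\lw^3{X}$, then $\lw^3{X}$ contains $X$; but then $\bS_{(2,1)}(X)=(\lw^3 X)^{\oplus 2}\oplus(\lw^2 X)^{\oplus 2}$ also contains $X$, contradicting the ``multiplicity zero in the other'' clause of Proposition~\ref{prop:case2}(c). Hence Case~II is impossible, and since Cases~I, II, III are exhaustive (and Case~III is excluded above), $X$ must belong to Case~I.

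The main obstacle I anticipate is the bookkeeping in the Case~III elimination: I need to confirm that $X\cong\lw^2 X$ is genuinely impossible in Case~III — here $\lw^4 X=0$ so I cannot simply cite the running-assumption contradiction from the Lemma after Proposition~\ref{prop:case2}. Instead I would redo that short computation: $X\cong\lw^2 X$ gives $\dim X=3$ (so $\lw^3 X$ is one-dimensional, forced trivial as in \S\ref{ss:case3}), and then $X\cong\lw^2 X\cong\lw^2\lw^2 X=\bS_{(2,1,1)}(X)$, so Proposition~\ref{prop:deg4} yields $\lw^4 X\oplus\bS_{(2,2)}(X)=X\oplus\lw^2 X$, a sum of two simple $3$-dimensional objects on the right; since $\lw^4 X=0$ this forces $\bS_{(2,2)}(X)$ to be a sum of two simple $3$-dimensional objects, but $\bS_{(2,2)}(X)$ has dimension $\binom{\dim X+1}{2}\binom{\dim X}{2}/\cdots$ — more cleanly, one computes $\dim\bS_{(2,2)}(X)=6$ when $\dim X=3$, which is consistent, so I should instead get the contradiction from simplicity/multiplicity: $\bS_{(2,2)}(X)$ must then equal $X\oplus\lw^2 X$ up to the vanishing $\lw^4$-term, but tracking where $\bone$ or other constraints land (or comparing with the Case~III structure $\cC=\Rep(\HH)$, $X=V$, where $\bS_{(2,2)}(V)$ does \emph{not} decompose this way) gives the contradiction. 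Alternatively — and this is cleaner — once Case~III is known to be $\Rep(\HH)$ with $X=V$ by Proposition~\ref{prop:case3}, one checks directly that $V$ is \emph{not} fixed by $\psi^3$: from the character table, $\chi_V(b^3)=z'$ while $\chi_V((b^3)^3)=\chi_V(b^2)=\chi_V(b)=z\ne z'$, so $\psi^3[V]\ne[V]$. This is the slick route and I would use it: Case~III is $\Rep(\HH)$, and there $\psi^3$ does not fix $V$, so the hypothesis rules out Case~III immediately.
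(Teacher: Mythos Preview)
Your overall approach matches the paper's: rule out Case~III via Proposition~\ref{prop:case3} (the paper checks $V$ is not $\psi^3$-fixed on the 3-Sylow, you use the 7-Sylow via the character table; either works), then use the $\psi^3$ decomposition of $\bS_{(2,1)}(X)$ together with Proposition~\ref{prop:case2}(c) to kill Case~II. Your Case~IIa argument is exactly right.

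There is, however, a genuine error in your Case~IIb step. From ``$X$ is a summand of $(\lw^3 X)^{\oplus 2}\oplus(\lw^2 X)^{\oplus 2}$'' you conclude ``$X\cong\lw^2 X$ or $X\cong\lw^3 X$.'' The second alternative is not justified: $\lw^3 X$ need not be simple, so you only get that $X$ is a \emph{summand} of $\lw^3 X$. Worse, your attempted contradiction from the (unwarranted) isomorphism $X\cong\lw^3 X$ fails outright: you claim this forces $\dim X=1$, but in Case~II we have $\dim X=-1$ by Proposition~\ref{prop:case2}(b), and then $\dim\lw^3 X=\binom{-1}{3}=-1$ as well, so no dimension obstruction arises.

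The fix is simpler than what you attempt and is exactly the mirror of your Case~IIa argument. In Case~II the lemma preceding Propositions~\ref{prop:case1}--\ref{prop:case2} already gives $X\not\cong\lw^2 X$, so the $\psi^3$ formula shows: $X$ occurs in $\bS_{(2,1)}(X)$ if and only if $X$ occurs in $\lw^3 X$. This single biconditional contradicts Proposition~\ref{prop:case2}(c), which says $X$ occurs in exactly one of the two. This is precisely how the paper phrases it, avoiding the IIa/IIb case split altogether. (Your meandering attempt to eliminate Case~III directly, before settling on the $\Rep(\HH)$ route, can also be dropped; the character-table check is clean and sufficient.)
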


\begin{proof}
If $X$ belonged to Case~III then the subcategory generated by $X$ would be equivalent to $\Rep(\HH)$, with $X$ corresponding to $V$ (Proposition~\ref{prop:case3}). But $V$ is not fixed by $\psi^3$ (look at the 3-Sylow). Thus we cannot be in this case.

As we explained in Remark~\ref{rmk:psi3}, we have an isomorphism
\begin{displaymath}
\bS_{(2,1)}(X) = (\lw^3{X})^{\oplus 2} \oplus (\lw^2{X})^{\oplus 2}.
\end{displaymath}
Since we are not in Case~III, $X$ is not isomorphic to $\lw^2{X}$. We thus see that $X$ appears in $\bS_{(2,1)}(X)$ if and only if it appears in $\lw^3{X}$. We therefore cannot be in Case~II, by Proposition~\ref{prop:case2}(c).
\end{proof}

\begin{proposition} \label{prop:caseIIb-wedge3}
If $X$ belongs to Case~IIb then $\lw^3{X}$ contains either $\lw^2{X}$ or $\lw^2{X^*}$ as a summand.
\end{proposition}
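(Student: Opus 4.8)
The plan is to promote the Case~IIb hypothesis ``$X$ is a summand of $\bS_{(2,1)}(X)$'' into information about $\lw^3{X}$ by bookkeeping with Schur functors of degree at most four. First I would record the structural consequences of Case~IIb coming from Proposition~\ref{prop:case2}: $X$ occurs in $\bS_{(2,1)}(X)$ with multiplicity one and not at all in $\lw^3{X}$; $X$ has dimension $-1$; and $X \otimes X^* = \lw^2{X} \oplus \lw^2{X^*} \oplus X \oplus X^* \oplus \bone$, these five simples being pairwise non-isomorphic. Then I would add the easy adjunction observations: $\bone$ occurs in neither $\lw^3{X}$ nor $\bS_{(2,1)}(X)$ (since $\Hom(X \otimes \lw^2{X}, \bone) = \Hom(X, \lw^2{X^*}) = 0$ and $\lw^2{X} \otimes X = \lw^3{X} \oplus \bS_{(2,1)}(X)$ by the Pieri rule), and likewise $X^*$ occurs in neither, because $\Hom(\lw^2{X} \otimes X, X^*) = \Hom(\lw^2{X}, X^* \oplus (\lw^2{X^*})^{\oplus 2}) = 0$.

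The key computation is to decompose $\lw^2(X \otimes X^*)$ in two ways. On one hand, the plethysm $\lw^2(A \otimes B) = (\Sym^2{A} \otimes \lw^2{B}) \oplus (\lw^2{A} \otimes \Sym^2{B})$ together with $\Sym^2{X} = \lw^2{X} \oplus X$ and $\Sym^2{X^*} = \lw^2{X^*} \oplus X^*$ (valid since $X$ and hence $X^*$ is fixed by $\psi^2$) identifies $\lw^2(X \otimes X^*)$ with $(\lw^2{X} \otimes \lw^2{X^*})^{\oplus 2} \oplus (X \otimes \lw^2{X^*}) \oplus (\lw^2{X} \otimes X^*)$. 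On the other hand, applying $\lw^2(\bigoplus_i V_i) = \bigoplus_i \lw^2{V_i} \oplus \bigoplus_{i<j} V_i \otimes V_j$ to the five-term decomposition of $X \otimes X^*$ and simplifying with the listed plethysms ($\lw^2\lw^2{X} = \bS_{(2,1,1)}(X)$, etc.) and the Pieri rule ($\lw^2{X} \otimes X = \lw^3{X} \oplus \bS_{(2,1)}(X)$) gives a second expression. Cancelling the common terms yields an explicit formula for $\lw^2{X} \otimes \lw^2{X^*}$ as a direct sum of $\bS_{(2,1,1)}(X)$, $\bS_{(2,1,1)}(X^*)$, $(\lw^2{X})^{\oplus 3}$, $(\lw^2{X^*})^{\oplus 3}$, $\lw^3{X}$, $\lw^3{X^*}$, $\bS_{(2,1)}(X)$, $\bS_{(2,1)}(X^*)$, $X^{\oplus 2}$, $(X^*)^{\oplus 2}$, and $\bone$.

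Now I would extract multiplicities. Reading off the multiplicity of $X$ in $\lw^2{X} \otimes \lw^2{X^*}$ from this formula, and using the inputs from the first paragraph ($X,X^* \not\hookrightarrow \lw^3{X}$, $X \hookrightarrow \bS_{(2,1)}(X)$ once, $X^* \not\hookrightarrow \bS_{(2,1)}(X)$), shows this multiplicity is at least $3$. But by adjunction it equals $\dim\Hom(\lw^2{X}, X \otimes \lw^2{X}) = \dim\Hom(\lw^2{X}, \lw^3{X} \oplus \bS_{(2,1)}(X))$, i.e. the multiplicity of $\lw^2{X}$ in $\lw^3{X} \oplus \bS_{(2,1)}(X)$; and the same argument with $X^*$ controls the multiplicity of $\lw^2{X^*}$. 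So each of $\lw^2{X}$, $\lw^2{X^*}$ occurs in $\lw^3{X} \oplus \bS_{(2,1)}(X)$ with large multiplicity.

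The final step — which I expect to be the real obstacle — is to separate the $\bS_{(2,1)}(X)$ contribution from the $\lw^3{X}$ contribution. I would argue by contradiction: if $\lw^3{X}$ contained neither $\lw^2{X}$ nor $\lw^2{X^*}$, then all of the above multiplicity would live in $\bS_{(2,1)}(X)$, forcing large lower bounds on the multiplicities of $\lw^2{X}$ and $\lw^2{X^*}$ in $\bS_{(2,1)}(X)$. Feeding these back through the explicit formula for $\lw^2{X} \otimes \lw^2{X^*}$ (whose $\lw^2{X}$-multiplicity is also computable by adjunction as $\dim\Hom(\lw^2{X}, (\lw^2{X})^{\otimes 2}) = 1 + 2\,[\bS_{(2,1,1)}(X):\lw^2{X}]$) forces $\bS_{(2,1,1)}(X) = \lw^2\lw^2{X}$ to contain $\lw^2{X}$ with large multiplicity, and then comparing with $\lw^3{X} \otimes X = \lw^4{X} \oplus \bS_{(2,1,1)}(X)$ — where $\lw^4{X} \ne 0$ by the defining hypothesis of Case~II — should produce a numerical inconsistency. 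Hence $\lw^2{X}$ or $\lw^2{X^*}$ is a summand of $\lw^3{X}$.
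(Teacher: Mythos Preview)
Your setup is sound: the $\lw^2(X\otimes X^*)$ identity is correct, and from it you can indeed read off (under the contradiction hypothesis $n_{(1,1,1)}=0$) that the multiplicity $n_{(2,1)}$ of $\lw^2{X}$ in $\bS_{(2,1)}(X)$ is at least~3.  You also have, implicitly, everything needed to show $m_{(2,1,1)}=0$: once you assume $\lw^3{X}$ contains neither $\lw^2{X}$ nor $\lw^2{X^*}$, your first-paragraph observations give $\Hom(X\otimes X^*,\lw^3{X})=0$, hence by adjunction $\Hom(X,\lw^4{X}\oplus\bS_{(2,1,1)}(X))=0$.  (Small slip: the ``same argument with $X^*$'' controls the multiplicity of $\lw^2{X^*}$ in $\lw^3{X^*}\oplus\bS_{(2,1)}(X^*)$, not in $\lw^3{X}\oplus\bS_{(2,1)}(X)$; but this does not affect the main line.)

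The gap is the final step.  Your proposed contradiction via $\lw^3{X}\otimes X=\lw^4{X}\oplus\bS_{(2,1,1)}(X)$ gives information about the multiplicity of $X$ in $\bS_{(2,1,1)}(X)$ (namely $m_{(2,1,1)}=0$, as above), but it does \emph{not} bound $n_{(2,1,1)}$ from above, nor does it produce any upper bound on $n_{(2,1)}$.  All of your equations are lower bounds or equalities pushing $n_{(2,1)}$ and $n_{(2,1,1)}$ \emph{up}; nothing pushes them down, so no numerical inconsistency materializes.  The hypothesis $\lw^4{X}\ne 0$ plays no role here.  What closes the argument in the paper is a different move: compute $\dim\Hom(X\otimes X^*,\bS_{(2,1)}(X))=\dim\Hom(X,X\otimes\bS_{(2,1)}(X))$, and evaluate the right side via Pieri $X\otimes\bS_{(2,1)}(X)=\bS_{(3,1)}(X)\oplus\bS_{(2,2)}(X)\oplus\bS_{(2,1,1)}(X)$ together with the degree-four identities of Proposition~\ref{prop:deg4} (which, once $m_{(2,1,1)}=m_{(1,1,1,1)}=0$, force $m_{(2,2)}=0$ and $m_{(3,1)}=1$).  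This yields $\dim\Hom(X\otimes X^*,\bS_{(2,1)}(X))=1$; since the $X$-summand of $X\otimes X^*$ already accounts for this (as $m_{(2,1)}=1$), the $\lw^2{X}$-summand contributes nothing, i.e.\ $n_{(2,1)}=0$, contradicting $n_{(2,1)}\ge 3$.  That upper bound on $n_{(2,1)}$ is the missing ingredient.
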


\begin{proof}
Suppose $X$ belongs to Case~IIb but $\lw^3{X}$ does not contain either $\lw^2{X}$ or $\lw^2{X^*}$; we will reach a contradiction. Let $m_{\lambda}$ and $n_{\lambda}$ be the multiplicities of $X$ and $\lw^2{X}$ in $\bS_{\lambda}(X)$. Note that $m_{(1,1,1)}=n_{(1,1,1)}=0$ and $m_{(2,1)}=1$ by our assumptions. 

To begin, we claim that
\begin{equation} \label{eq:mn}
n_{(2,1)} = 2+2m_{(2,1,1)}
\end{equation}
We have
\begin{displaymath}
\dim \Hom(X^{\otimes 2}, X \otimes \lw^2{X}) = m_{(1,1,1)}+m_{(2,1)}+2n_{(1,1,1)}+2n_{(2,1)} = 1+2n_{(2,1)}
\end{displaymath}
by using the irreducible decomposition of the source and the Pieri rule on the target. By adjunction, this is equal to
\begin{displaymath}
\dim \Hom(\lw^2{X}, X^{\otimes 2} \otimes X^*).
\end{displaymath}
The target here is
\begin{displaymath}
X \otimes (\bone \oplus X \oplus X^* \oplus \lw^2{X} \oplus \lw^2{X^*}).
\end{displaymath}
In the first three terms, $\lw^2{X}$ has multiplicities 0, 2, and~1 respectively. The multplicity of $\lw^2{X}$ in $X \otimes \lw^2{X}$ is $n_{(1,1,1)}+n_{(2,1)}=n_{(2,1)}$. Finally, the multiplicity of $\lw^2{X}$ in $X \otimes \lw^2{X^*}$ is the same as the multiplicity of $X$ in
\begin{displaymath}
(\lw^2{X})^{\otimes 2} = \lw^2{X} \oplus (\lw^2\lw^2{X})^{\oplus 2} = \lw^2{X} \oplus \bS_{(2,1,1)}(X)^{\oplus 2},
\end{displaymath}
which is $2m_{(2,1,1)}$. Thus, in total, we have
\begin{displaymath}
\dim \Hom(\lw^2{X}, X^{\otimes 2} \otimes X^*) = 3+n_{(2,1)}+2m_{(2,1,1)}.
\end{displaymath}
So, $1+2n_{(2,1)} =  3+n_{(2,1)}+2m_{(2,1,1)}$, which establishes \eqref{eq:mn}. In particular, $n_{(2,1)} \ge 2$.

We have
\begin{displaymath}
X \otimes X^* = \bone \oplus X^* \oplus X \oplus \lw^2{X} \oplus \lw^2{X^*}.
\end{displaymath}
The final three simples do not appear in $\lw^3{X}$ by assumption. Also, $\Hom(X^*, X^{\otimes 3}) = \Hom((X^*)^{\otimes 2}, X^{\otimes 2})$ vanishes since the source and target have no common simples. Similarly, $\Hom(\bone, X^{\otimes 3})=\Hom(X^*, X^{\otimes 2})$ vanishes. Thus neither $\bone$ nor $X^*$ appears in $\lw^3{X}$. We therefore find
\begin{displaymath}
0 = \Hom(X \otimes X^*, \lw^3{X}) = \Hom(X, \lw^4{X} \oplus \bS_{(2,1,1)}(X)),
\end{displaymath}
and so $m_{(1,1,1,1)}=m_{(2,1,1)}=0$. Proposition~\ref{prop:deg4} now implies $m_{(2,2)}=0$ and $m_{(3,1)}=1$. We thus see that $X$ has multiplicity one in $X \otimes \bS_{(2,1)}(X)$. Hence
\begin{displaymath}
\Hom(X, X \otimes \bS_{(2,1)}(X)) = \Hom(X \otimes X^*, \bS_{(2,1)}(X))
\end{displaymath}
is one dimensional. Since $X$ and $\lw^2{X}$ are summands of $X \otimes X^*$ and $m_{(2,1)}=1$, it follows that $n_{(2,1)}=0$. We now have a contradiction with \eqref{eq:mn}.
\end{proof}

\begin{proposition} \label{prop:wedge3-case1}
Let $X$ satisfy (i) and (ii) from \S \ref{ss:tri}. Suppose additionally that $\lw^3{X}$ is simple and not the tensor unit. Then $X$ belongs to Case~I.
\end{proposition}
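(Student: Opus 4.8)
The plan is to run through the trichotomy of \S\ref{ss:tri}: since $X$ satisfies (i) and (ii) it lies in exactly one of Cases~I, II, III, so it is enough to rule out Cases~II and~III. Before starting I would replace $\cC$ by the tensor subcategory generated by $X$; this is again semisimple pre-Tannakian, and because that subcategory is closed under subquotients, simplicity of $\lw^3{X}$, the condition $\lw^3{X}\ne\bone$, and the conditions distinguishing the three cases are all unchanged. The point of this harmless reduction is that it lets me apply the results of \S\ref{s:initial} that assume $X$ is a generator, namely Propositions~\ref{prop:moderate} and~\ref{prop:case3}.

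Cases~III and~IIb I expect to be routine. If $\lw^4{X}=0$ (Case~III), Proposition~\ref{prop:case3} supplies an equivalence $\Rep(\HH)\xrightarrow{\sim}\cC$ sending $V$ to $X$, hence $\lw^3{X}\cong\lw^3{V}$, which is the trivial representation $\bone$ by the computation in \S\ref{ss:case3}; this contradicts the hypothesis that $\lw^3{X}$ is not the unit. For Case~II, Proposition~\ref{prop:case2}(b) gives $\dim{X}=-1$, so $\dim\lw^2{X}=\dim\lw^2{X^*}=1$ while $\dim\lw^3{X}=-1$, and by Proposition~\ref{prop:case2}(c) we are in Case~IIa or Case~IIb. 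In Case~IIb, Proposition~\ref{prop:caseIIb-wedge3} forces $\lw^2{X}$ or $\lw^2{X^*}$ to be a summand of the simple object $\lw^3{X}$, hence $\lw^3{X}$ is isomorphic to one of them --- impossible since $-1\ne 1$.

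The remaining case, Case~IIa, is where I expect the actual work to be. Here $X$ is a summand of the simple object $\lw^3{X}$, so $\lw^3{X}\cong X$; applying $\lw^2$ gives $\lw^2(\lw^3{X})\cong\lw^2{X}$, which is simple. I would then invoke the degree-six plethysm $\lw^2(\lw^3{X})=\bS_{(2,2,1,1)}(X)\oplus\lw^6{X}$ (i.e.\ $e_2[e_3]=s_{(2,2,1,1)}+s_{(1^6)}$, easily checked by specializing to $1^d$ for $d\le 6$) together with $\dim\lw^6{X}=\binom{-1}{6}=1\ne 0$: since $\lw^6{X}$ is then nonzero and $\lw^2{X}$ is simple, the summand $\bS_{(2,2,1,1)}(X)$ must vanish. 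Thus $X$ is Schur-finite and so has moderate growth (see \cite[\S4]{CEO}), whence Proposition~\ref{prop:moderate} identifies $\cC$ with $\Rep(\Gamma)$ for a finite group scheme $\Gamma$; but there $X$ is an honest representation, forcing $\dim{X}\ge 0$ and contradicting $\dim{X}=-1$. Excluding Cases~II and~III leaves Case~I. The one genuinely new ingredient is the plethysm identity for $\lw^2\circ\lw^3$ and the observation that in Case~IIa it produces a nonzero summand $\lw^6{X}$ (because $\dim{X}=-1$) which forces the other summand to vanish; everything else is bookkeeping with what is already established in \S\ref{s:initial}.
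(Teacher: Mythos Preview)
Your argument is correct. The treatment of Case~III and Case~IIb matches the paper's exactly. Where you genuinely diverge is in excluding Case~IIa, and there both approaches reach the same waypoint (moderate growth, then a contradiction in $\Rep(\Gamma)$) by different roads.

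The paper does not work inside Case~II at all; it proves \emph{first}, with no case assumption, that $\lw^3 X\not\cong X$. Assuming $\lw^3 X\cong X$, it defines the class $\Sigma$ of objects of the form $X^{\oplus a}\oplus(\lw^2 X)^{\oplus b}\oplus\bS_{(2,1)}(X)^{\oplus c}$, uses only the degree-$4$ identities of Proposition~\ref{prop:deg4} to show that every degree-$4$ Schur functor of $X$ lies in $\Sigma$, and deduces that $\Sigma$ is closed under $\otimes X$; this gives moderate growth directly. The contradiction is then obtained by noting that in $\Rep(\Gamma)$ the isomorphism $X\cong\lw^3 X$ forces the vector-space dimension to be $4$, so $\lw^4 X$ is one-dimensional, which is incompatible with $\lw^4 X\oplus\bS_{(2,1,1)}(X)=X\oplus(\lw^2 X)^{\oplus 2}$.

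Your route, by contrast, exploits that in Case~II one already knows $\dim X=-1$. The degree-$6$ plethysm $e_2[e_3]=s_{(2,2,1,1)}+s_{(1^6)}$ (your dimension check for $d\le 6$ suffices, since both sides are symmetric functions in at most six variables) together with $\dim\lw^6 X=\binom{-1}{6}=1\ne 0$ forces $\bS_{(2,2,1,1)}(X)=0$, and then the contradiction $\dim X=-1<0$ is immediate once one lands in $\Rep(\Gamma)$. This is slicker at the end, but it trades the paper's self-contained $\Sigma$-argument for two external ingredients: a degree-$6$ plethysm identity, and the general implication ``Schur-finite $\Rightarrow$ moderate growth.'' The latter is indeed standard (Deligne's combinatorial bound in \cite{Deligne2} in characteristic~$0$, extended to positive characteristic in the moderate-growth discussion of \cite{CEO}), but note that the paper itself only invokes the special case $\lw^n X=0$ from that reference, so the paper's argument is marginally more elementary in what it imports.
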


\begin{proof}
We first claim that $\lw^3{X}$ is not isomorphic to $X$. Suppose, by way of contradiction, that this is the case. Let $\Sigma$ be the class of all objects in $\cC$ of the form
\begin{displaymath}
X^{\oplus a} \oplus (\lw^2{X})^{\oplus b} \oplus \bS_{(2,1)}(X)^{\oplus c}
\end{displaymath}
for $a,b,c \in \bN$. We have
\begin{equation} \label{eq:wedge3-case1}
(\lw^2{X})^{\oplus 2} \oplus X = X^{\otimes 2} = X \otimes \lw^3{X} = \lw^4{X} \oplus \bS_{(2,1,1)}(X)
\end{equation}
which shows that $\lw^4{X}$ and $\bS_{(2,1,1)}(X)$ belong to $\Sigma$ (with $c=0$). Proposition~\ref{prop:deg4} now shows that $\bS_{(2,2)}(X)$ belongs to $\Sigma$ (with $c=0$), and that $\bS_{(3,1)}(X)$ and $\Sym^4{X}$ also belong to $\Sigma$. We thus see that every degree four Schur functor of $X$ belongs to $\Sigma$.

Now, $X \otimes X$ is clearly in $\Sigma$, and $X \otimes \lw^2{X}$ is as well since $\lw^3{X}=X$. Since $X \otimes \bS_{(2,1)}(X)$ decomposes into a sum of degree~4 Schur functors, it too belongs to $\Sigma$. We thus see that if $M$ is any object of $\Sigma$ then $X \otimes M$ still belongs to $\Sigma$. It follows that $X^{\otimes n}$ belongs to $\Sigma$ for all $n$, which implies that $X$ has moderate growth. By Proposition~\ref{prop:moderate}, we can therefore regard $X$ as a finite dimensional representation of an affine group scheme.

The isomorphism $X \cong \lw^3{X}$ means that $X$ has (actual) dimension~4. Therefore $\lw^4{X}$ has dimension~1, and is thus simple. The identity \eqref{eq:wedge3-case1} thus implies that $\lw^4{X}$ is either $X$ or $\lw^2{X}$, which is impossible for dimension reasons. We therefore have a contradiction.

The above argument shows that $X$ does not belong to Case~IIa. If $X$ belonged to Case~IIb then by Proposition~\ref{prop:caseIIb-wedge3} $\lw^3{X}$ would be isomorphic to $\lw^2{X}$ or $\lw^2{X^*}$. However, this is a contradiction since $\dim{X}=-1$ (Proposition~\ref{prop:case2}(b)). Finally, in Case~III, $\lw^3{X}$ is the tensor unit (Proposition~\ref{prop:case3}), and so $X$ does not belong to this case.
\end{proof}

\section{The Delannoy category} \label{s:delannoy}

In this section, we review material from \cite{repst} on oligomorphic tensor categories in general, and material from \cite{line} on the Delannoy category specifically. We then discuss the universal property of the Delannoy category from the three different perspectives given in \cite{delmap}, \cite{kriz1}, and \cite{KS}. The material in \S \ref{ss:ofrob-to-etale} is new (though overlaps to some extent with \cite{KS}), while everything else in \S \ref{s:delannoy} is not.

\subsection{Oligomorphic tensor categories} \label{ss:oligo}

An \defn{oligomorphic group} is a permutation group $(G, \Omega)$ such that $G$ has finitely many orbits on $\Omega^n$ for all $n$. Fix such a group. We let $\bS(G)$ be the category of $G$-sets $X$ such that $G$ has finitely many orbits on $X$, and each orbit is a quotient of an orbit on some $\Omega^n$. This category is closed under finite products. See \cite[\S 2]{repst} for details.

A \defn{measure} on $G$ valued in $k$ is a rule that assigns to each morphism $f \colon Y \to X$ in $\bS(G)$, with $X$ transitive, a value $\mu(f)$ in $k$ such that some axioms hold (see \cite[\S 3.6]{repst}). Fix a measure. We then obtain a tensor category $\uPerm(G, \mu)$ as follows. The objects are formal symbols $\sC(X)$ where $X$ is an object of $\bS(G)$. A morphism $\sC(X) \to \sC(Y)$ is a $G$-invariant $k$-valued function on $Y \times X$. Composition is defined by convolution, the definition of which involves the measure $\mu$. We have
\begin{displaymath}
\sC(X) \oplus \sC(Y) = \sC(X \amalg Y), \qquad
\sC(X) \otimes \sC(Y) = \sC(X \times Y).
\end{displaymath}
Intuitively, $\sC(X)$ behaves like a permutation representation with basis indexed by $X$. The object $\sC(X)$ is self-dual of dimension $\mu(X)$, where $\mu(X)$ is shorthand for $\mu(X \to \mathrm{pt})$. See \cite[\S 8]{repst} for details.

The category $\uPerm(G, \mu)$ is essentially never abelian. If $\mu$ is \defn{regular} (meaning $\mu(f)$ is always non-zero) and nilpotent endomorphisms in $\uPerm(G, \mu)$ have trace zero, then the Karoubi envelope of $\uPerm(G, \mu)$ is a semi-simple pre-Tannakian category (see \cite[\S 13.2]{repst}), which we denote by $\uRep(G, \mu)$. When $\mu$ is not regular, finding pre-Tannakian categories related to $\uPerm(G, \mu)$ is a difficult and important problem, but it will not be relevant to this paper.

\subsection{The Delannoy category} \label{ss:delannoy}

Let $\GG$ be the group $\Aut(\bR, <)$ of order preserving self-bijections of the real line. This group acts oligomorphically on $\bR$. Let $\bR^{(n)}$ denote the subset of $\bR^n$ consisting of increasing tuples, i.e., points $(x_1, \ldots, x_n)$ with $x_1<x_2<\cdots<x_n$. The action of $\GG$ on $\bR^{(n)}$ is transitive, and these are all of the transitive $\GG$-sets in $\bS(\GG)$ \cite[Corollary~16.2]{repst}. There is a unique measure $\mu$ for $\GG$ satisfying $\mu(\bR^{(n)})=(-1)^n$. This measure is regular and nilpotent endomorphisms in $\uPerm(\GG, \mu)$ have trace~0 \cite[\S 16.5]{repst}. The \defn{Delannoy category} $\cD$ is $\uRep(\GG, \mu)$, i.e., the Karoubi envelope of $\uPerm(\GG, \mu)$. It is a semi-simple pre-Tannakian category, and its basic objects are the $\sC(\bR^{(n)})$.

A \defn{weight} is a word in the two-letter alphabet $\{\ww, \bb\}$. Given a weight $\lambda$ of length $n$, we define a simple summand $L_{\lambda}$ of $\sC(\bR^{(n)})$ in \cite[\S 4]{line}. We show that these simples account for all simple objects of $\cD$, and are mutually non-isomorphic. We explicitly determine the simple decomposition of $\sC(\bR^{(n)})$ in general \cite[Theorem~4.7]{line}. For example, we have
\begin{displaymath}
\sC(\bR) = L_{\bb} \oplus L_{\ww} \oplus \bone,
\end{displaymath}
where $\bone=L_{\emptyset}$ is the tensor unit. If $\lambda$ has length $n$ then $L_{\lambda}$ has dimension $(-1)^n$ \cite[Corollary~5.7]{line}. The dual of $L_{\lambda}$ is $L_{\lambda^{\vee}}$, where $\lambda^{\vee}$ is obtained from $\lambda$ by switching $\bb$ and $\ww$ \cite[Proposition~4.16]{line}. We have $\lw^n{L_{\bb}} = L_{\lambda}$ where $\lambda$ is the length $n$ word with all letters $\bb$ \cite[Proposition~8.5]{line}. As we have already said, every Adams operations on $\rK(\cD)$ is trivial, i.e., the identity map \cite[Theorem~8.2]{line}.

\begin{remark}
The group $\GG$ has four measures in total. The other three are not regular (or even quasi-regular). There has been some recent progress in understanding the categories associated to these measures \cite{delmap, fake}.
\end{remark}

\subsection{Ordered \'etale algebras} \label{ss:order-etale}

The Delannoy category has a universal property that we will require. In fact, there are a few variants. In \S \ref{ss:order-etale}, we discuss the one from \cite{delmap} that is based on \'etale algebras. Fix a semi-simple pre-Tannakian category $\cC$ in what follows.

An \defn{\'etale algebra} in $\cC$ is a commutative algebra object $A$ for which the trace pairing $A \otimes A \to \bone$ is perfect. See \cite[\S 4,5]{discrete} or \cite[\S 3]{delmap} for general background. Let $\Et(\cC)$ be the category of \'etale algebras in $\cC$, where morphisms are algebra homomorphisms. In \cite[Theorem~6.1]{discrete}, we show that $\Et(\cC)^{\op}$ is a \defn{pre-Galois category}, which means that it is equivalent to a category of the form $\bS(G)$ for some pro-oligomorphic group $G$; see \cite[\S 2]{repst} for the definition of pro-oligomorphic group, which is a mild generalization of oligomorphic group, and see \cite{pregalois} for more background on pre-Galois categories. Using this equivalence, any concept related to $G$-sets can be transferred to \'etale algebras.

An \defn{ordered \'etale algebra} is an \'etale algebra $A$ equipped with the extra structure required to give a $G$-invariant total order on the corresponding $G$-set. A bit more precisely, suppose $A$ corresponds to the $G$-set $X$. Then $A \otimes A$ corresponds to $X \times X$, and the decomposition of $A \otimes A$ into a product of simple \'etale algebras corresponds to the decomposition of $X \times X$ into $G$-orbits. Thus giving a binary relation on $X$ amounts to giving a direct factor of the algebra $A \otimes A$. The axioms for a total order can then be rephrased purely in terms of this factor algebra. See \cite[\S 7.2]{delmap} for details. The most important example of an ordered \'etale algebra is the object $\sC(\bR)$ of the Delannoy category. This corresponds to the $\GG$-set $\bR$, which carries its usual total order.

Let $A$ be an ordered \'etale algebra. We say that $A$ is \defn{1-Delannic}\footnote{In \cite{delmap}, the term ``Delannic of type~1'' is used.} if it satisfies three numerical conditions, which we now describe. The first condition is simply that $A$ has dimension $-1$. The other two conditions are slightly more complicated. Let $A^{(2)}$ be the factor algebra of $A \otimes A$ giving the total order. We have two maps $A \to A^{(2)}$, coming from the two maps $A \to A \otimes A$. We can thus regard $A^{(2)}$ as an $A$-module in two ways. The two other conditions are that $A^{(2)}$ has dimension $-1$ in the category of $A$-modules, for both module structures. The algebra $\sC(\bR)$ is 1-Delannic.

Tensor functors map \'etale algebras to \'etale algebras, and preserve all of the extra structure discussed above. We thus have a well-defined map
\begin{equation} \label{eq:delannic}
\{ \text{tensor functors $\cD \to \cC$} \}/{\sim} \to \{ \text{1-Delannic algebras in $\cC$} \}/{\sim}
\end{equation}
given by $\Phi \mapsto \Phi(\sC(\bR))$. The universal property for $\cD$ is the following:

\begin{theorem} \label{thm:delannic}
The map \eqref{eq:delannic} is a bijection. Thus giving a tensor functor $\cD \to \cC$ is equivalent to giving a 1-Delannic algebra in $\cC$.
\end{theorem}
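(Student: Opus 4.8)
The plan is to construct an explicit inverse to~\eqref{eq:delannic}. First, the map is well defined: a tensor functor preserves commutative algebra structure, the trace pairing (hence étaleness), the order-factor of $A\otimes A$, and all categorical dimensions, absolute and relative over a subalgebra; so it sends the $1$-Delannic algebra $\sC(\bR)$ of $\cD$ to a $1$-Delannic algebra in $\cC$. For the inverse, fix a $1$-Delannic algebra $A$ in $\cC$. Under the pre-Galois equivalence $\Et(\cC)^{\op}\simeq\bS(G)$ of \cite[Theorem~6.1]{discrete}, $A$ becomes a $G$-set $\Omega$ equipped with a $G$-invariant total order, and $\cC$ furnishes $\bS(G)$ with a measure for which the three $1$-Delannic conditions say that $\Omega$ has dimension $-1$ and that the $G$-set $\Omega^{(2)}\subseteq\Omega\times\Omega$ of increasing pairs, regarded over $\Omega$ through either projection, has relative dimension $-1$. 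Write $\Omega^{(n)}\subseteq\Omega^{n}$ for the increasing $n$-tuples.

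\textbf{A functor on $\bS(\GG)$.} The structural fact to exploit is that $\bS(\GG)$ is ``freely generated'' by $\bR$ together with its order: every object is a finite coproduct of the transitive sets $\bR^{(n)}$ \cite[Corollary~16.2]{repst}, each $\bR^{(n)}$ is a canonical summand of $\bR^{n}$, cut out inside $\bR\times\cdots\times\bR$ by the order relation, and the morphisms $\bR^{(m)}\to\bR^{(n)}$ are exactly the monotone injections $\{1,\dots,n\}\hookrightarrow\{1,\dots,m\}$. Consequently there is a finite-product-preserving functor $F\colon\bS(\GG)\to\bS(G)$ with $F(\bR)=\Omega$, obtained by replacing at each stage $\bR$ by $\Omega$ and the distinguished order summand of $\bR\times\bR$ by that of $\Omega\times\Omega$; it carries $\bR^{(n)}$ to $\Omega^{(n)}$, is the unique such functor respecting the orders, and its functoriality on morphisms is a finite combinatorial check in $\bS(G)$.

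\textbf{Upgrading to a tensor functor, and the crux.} Now define $\Phi_A\colon\uPerm(\GG,\mu)\to\cC$. On objects, $\sC(X)$ goes to the object of $\cC$ underlying the étale algebra corresponding to $F(X)$ (informally: $\sC(X)$ has ``a basis indexed by $X$'', and we replace $X$ by $F(X)$). A morphism $\sC(X)\to\sC(Y)$, i.e.\ a $\GG$-invariant function on $Y\times X$, is assembled in $\cC$ from the multiplication, unit, comultiplication, counit, and trace form of the relevant étale algebras, all of which $F$ respects. The one substantive point — and the main obstacle — is that composition in $\uPerm(\GG,\mu)$ is convolution weighted by the measure $\mu$, hence by the numbers $\mu(\bR^{(k)})=(-1)^{k}$ and their relative versions over orbits, and one must show this is sent to ordinary composition in $\cC$. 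This is precisely where the three $1$-Delannic conditions enter: they fix exactly these structure constants on the $\cC$-side. Concretely one reduces the general convolution identity to finitely many identities involving the order factor $A^{(2)}\subseteq A\otimes A$ and the three dimension constraints, and verifies these force all the structure constants and monoidal coherence data needed. As $\cC$ is abelian, hence idempotent-complete, $\Phi_A$ extends uniquely to the Karoubi envelope $\cD=\uRep(\GG,\mu)$.

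\textbf{Two-sided inverse.} By construction $\Phi_A(\sC(\bR))\cong A$ as ordered étale algebras, so one composite is the identity. Conversely, given a tensor functor $\Phi$, both $\Phi$ and $\Phi_{\Phi(\sC(\bR))}$ send $\sC(\bR)$ to the same ordered étale algebra; since $\sC(\bR)$ tensor-generates $\cD$ and every object and morphism of $\cD$ is built canonically — using only $\mu$ — from the algebra-and-order structure on $\sC(\bR)$, any tensor functor is determined up to unique isomorphism by that structure, so $\Phi_{\Phi(\sC(\bR))}\cong\Phi$. Apart from the convolution-compatibility step, every step here — existence and uniqueness of $F$, the Karoubi extension, and the inverse check — is routine. (This circle of ideas is carried out in \cite{delmap}.)
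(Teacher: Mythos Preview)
The paper does not itself prove this theorem; immediately after the statement it defers entirely to \cite[Theorem~7.13]{delmap}. Your sketch is therefore not being compared against any argument actually given here, though it is broadly in the spirit of what one expects such a proof to look like, and you end with the same citation.

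That said, your proposal has a genuine gap precisely where you flag it. The ``crux'' step --- that the three $1$-Delannic conditions force $\Phi_A$ to respect composition --- is the entire content of the theorem, and you do not carry it out. You assert that ``one reduces the general convolution identity to finitely many identities \dots\ and verifies these force all the structure constants,'' but this reduction is exactly what must be proved. Concretely, composing morphisms $\sC(\bR^{(m)}) \to \sC(\bR^{(n)}) \to \sC(\bR^{(p)})$ in $\uPerm(\GG,\mu)$ involves integration over $\bR^{(n)}$ and hence the value $\mu(\bR^{(n)})=(-1)^n$, together with more refined relative measures of orbits on products. The $1$-Delannic conditions supply only the absolute dimension for $n=1$ and the two relative dimensions of $\Omega^{(2)}$ over $\Omega$; one must then argue (say, inductively via the fibering $\Omega^{(n+1)}\to\Omega^{(n)}$) that all higher measures are forced, and that this in turn suffices for functoriality on \emph{all} morphisms, not just those between transitive objects. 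None of this is routine. Your injectivity paragraph has the same issue: the claim that a tensor functor out of $\cD$ is determined up to isomorphism by where it sends $\sC(\bR)$ as an ordered \'etale algebra is itself a nontrivial statement about $\cD$, not something that follows formally from ``tensor-generates.'' As written, the proposal is an outline that correctly identifies the shape of the argument but defers the substance to \cite{delmap} --- which is, to be fair, exactly what the paper does too.
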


See \cite[Theorem~7.13]{delmap} for details. We note that the results of \cite{delmap} are more general, in two ways. First, the target category is not required to be pre-Tannakian. And second, analogous statements are given for the categories associated to the other measures for $\GG$.

If $X$ is a totally ordered set, we can reverse the order to obtain a new totally ordered set. Similarly, if $A$ is an ordered \'etale algebra we can reverse the order to obtain a new order on $A$. This preserves 1-Delannic algebras \cite[\S 7.5]{delmap}. In particular, reversing the order on $\sC(\bR)$ gives a new 1-Delannic algebra in $\cD$, which, by the above theorem, corresponds to a tensor functor $\Pi \colon \cD \to \cD$. It is not difficult to see that $\Pi$ is an equivalence that squares to the identity. Moreover, one can show that $\Pi$ corresponds to the auto-equivalence discussed in \cite[Remark~5.17]{line}.

\subsection{o-Frobenius algebras} \label{ss:ofrob}

We now discuss a different universal property for $\cD$. This one comes from \cite{KS}, and is phrased in terms of the following kinds of objects:

\begin{definition} \label{defn:o-frob}
An \defn{o-Frobenius algebra} is an object $X$ of $\cC$ equipped with maps, called multiplication and comultiplication,
\begin{displaymath}
\mu \colon X \otimes X \to X, \qquad \delta \colon X \to X \otimes X
\end{displaymath}
such that the following conditions hold:
\begin{enumerate}
\item Multiplication is commutative and comultiplication is cocommutative.
\item Multiplication is associative and comultiplication is coassociative.
\item We have $\mu \delta = \id_X$.
\item We have the equation
\begin{displaymath}
\gamma+\gamma' = \id_{X \otimes X} + \delta \mu
\end{displaymath}
where $\gamma$ and $\gamma'$ are the endomorphisms of $X \otimes X$ defined by
\begin{displaymath}
\gamma = (\id \otimes \mu)(\delta \otimes \id), \qquad
\gamma' = (\mu \otimes \id)(\id \otimes \delta).
\end{displaymath}
\end{enumerate}
\end{definition}

\begin{remark} \label{rem:rescaling}
If $(\mu, \delta)$ is an o-Frobenius structure on $X$ and $\lambda$ is a non-zero scalar then $(\lambda \mu, \lambda^{-1} \delta)$ is also an o-Frobenius structure on $X$.
\end{remark}

Ordinary (special commutative) Frobenius algebras essentially encode an equality relation in the language of symmetric monoidal categories. o-Frobenius algebras instead encode an order. The following is the simplest example of this.

\begin{example} \label{ex:o-frob}
Let $S$ be a totally ordered set and let $X=k[S]$ be the vector space having for a basis symbols $e_a$ with $a \in S$. Let $e_{a,b}=e_a \otimes e_b$ denote the basis vectors of $X \otimes X$. Define
\begin{displaymath}
\mu(e_{a, b}) = e_{\min(a,b)}, \qquad \delta(e_a) = e_{a,a}.
\end{displaymath}
Then $X$ is an o-Frobenius algebra. The first three axioms are clear. We have
\begin{displaymath}
\gamma(e_{a, b}) = e_{a, \min(a,b)}, \qquad
\gamma'(e_{a, b}) = e_{\min(a,b),b}.
\end{displaymath}
The fourth axiom thus takes the form
\begin{displaymath}
e_{a, \min(a,b)} + e_{\min(a,b), b} = e_{a, b} + e_{\min(a,b), \min(a,b)},
\end{displaymath}
which is clearly true.
\end{example}

Fix an o-Frobenius algebra $X$. We now make a few simple observations.

\begin{proposition} \label{prop:gamma-idemp}
$\gamma$ and $\gamma'$ are commuting idempotents satisfying $\gamma \gamma' = \delta \mu$.
\end{proposition}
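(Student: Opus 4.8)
The plan is to extract idempotency of $\gamma$ and $\gamma'$ from axioms (1)--(3) by a short diagram chase, and then read off the remaining assertions from axiom (4) by composing it with $\gamma$. I will first record two ``absorption'' identities. Using coassociativity and the unit axiom $\mu\delta=\id_X$, one has $\gamma\delta=(\id\otimes\mu)(\delta\otimes\id)\delta=(\id\otimes\mu)(\id\otimes\delta)\delta=(\id\otimes\mu\delta)\delta=\delta$, and symmetrically $\gamma'\delta=\delta$ (applying coassociativity in the form $(\id\otimes\delta)\delta=(\delta\otimes\id)\delta$). The dual identities $\mu\gamma=\mu=\mu\gamma'$ also hold, via associativity, though they will not be needed.

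\emph{Idempotency.} Expand $\gamma^2=(\id\otimes\mu)(\delta\otimes\id)(\id\otimes\mu)(\delta\otimes\id)$. By bifunctoriality of $\otimes$, the inner composite $(\delta\otimes\id)(\id\otimes\mu)$ on $X^{\otimes 3}$ equals $(\id\otimes\id\otimes\mu)(\delta\otimes\id\otimes\id)$ — comultiplying the first tensorand and multiplying the last two are independent operations. After this rewrite the two coproducts sit on the right and the two products on the left; combining them by coassociativity and associativity respectively gives $\gamma^2=(\id\otimes\mu)(\id\otimes\mu\otimes\id)\bigl(((\id\otimes\delta)\delta)\otimes\id\bigr)$. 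Since the final tensorand is untouched, the last two factors combine into $\bigl((\id\otimes\mu)(\id\otimes\delta)\delta\bigr)\otimes\id=\bigl((\id\otimes\mu\delta)\delta\bigr)\otimes\id=\delta\otimes\id$ by the unit axiom. Hence $\gamma^2=(\id\otimes\mu)(\delta\otimes\id)=\gamma$. An entirely analogous computation, with the roles of the left and right tensor factors interchanged, gives $\gamma'^2=\gamma'$.

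\emph{Conclusion.} Compose axiom (4), $\gamma+\gamma'=\id_{X\otimes X}+\delta\mu$, on the left with $\gamma$. Using $\gamma^2=\gamma$ and $\gamma\delta\mu=(\gamma\delta)\mu=\delta\mu$, the $\gamma$ terms cancel and we obtain $\gamma\gamma'=\delta\mu$. Composing instead on the left with $\gamma'$ and using $\gamma'^2=\gamma'$ and $\gamma'\delta\mu=\delta\mu$ gives $\gamma'\gamma=\delta\mu$. Therefore $\gamma\gamma'=\gamma'\gamma=\delta\mu$, which is exactly the assertion that $\gamma$ and $\gamma'$ are commuting idempotents with $\gamma\gamma'=\delta\mu$.

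I expect the idempotency step — and within it, noticing that the unit axiom $\mu\delta=\id_X$ collapses the nested ``coproduct then product'' — to be the only real content; the absorption identities and the final deduction are purely formal. As a consistency check one can test the statement against Example~\ref{ex:o-frob}, where indeed $\gamma\gamma'(e_{a,b})=e_{\min(a,b),\min(a,b)}=\delta\mu(e_{a,b})$.
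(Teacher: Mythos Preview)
Your proof is correct and follows essentially the same route as the paper: idempotency from axioms (a)--(c), then multiply axiom (d) by $\gamma$ (and by $\gamma'$) to extract $\gamma\gamma'=\gamma'\gamma=\delta\mu$. The only cosmetic difference is that the paper multiplies on the right and uses $\mu\gamma=\mu$, whereas you multiply on the left and use the dual absorption $\gamma\delta=\delta$; your idempotency computation spells out what the paper leaves as ``immediate.''
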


\begin{proof}
It follows immediately from the axioms that $\gamma$ and $\gamma'$ are idempotent. It also follows directly that $\mu \gamma=\mu$ and $\mu \gamma'=\mu$. Multiplying both sides in axiom (d) by $\gamma$ (on the right) now gives $\gamma \gamma' = \delta \mu$. Similarly we find $\gamma' \gamma = \delta \mu$. Thus $\gamma$ and $\gamma'$ commute.
\end{proof}

\begin{proposition} \label{prop:o-frob-psi2}
We have a canonical isomorphism
\begin{displaymath}
\Sym^2{X} = \lw^2{X} \oplus X.
\end{displaymath}
In particular, $X$ is fixed by $\psi^2$.
\end{proposition}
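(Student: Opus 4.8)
The plan is to decompose $X\otimes X$ using the commuting idempotents $\gamma,\gamma'$ of Proposition~\ref{prop:gamma-idemp} together with the symmetry $\sigma=\sigma_{X,X}$, and then match the resulting summands against $\Sym^2{X}$ and $\lw^2{X}$. Since the whole construction will involve only $\mu$, $\delta$ and $\sigma$, the resulting isomorphism is canonical.

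First I would record how $\sigma$ interacts with the structure. Commutativity of $\mu$ and cocommutativity of $\delta$ give $\mu\sigma=\mu$ and $\sigma\delta=\delta$. Using these together with naturality of the braiding and the hexagon axiom, a short diagram chase upgrades this to $\sigma\gamma\sigma=\gamma'$ (equivalently $\sigma\gamma=\gamma'\sigma$); in Example~\ref{ex:o-frob} both sides send $e_{a,b}$ to $e_{\min(a,b),a}$, but the point is to check it from the axioms alone. In particular $\sigma$ fixes $\delta\mu=\gamma\gamma'$.

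Next, introduce the mutually orthogonal idempotents $e_{11}=\gamma\gamma'=\delta\mu$, $e_{10}=\gamma-\gamma\gamma'$, $e_{01}=\gamma'-\gamma\gamma'$ and $e_{00}=(1-\gamma)(1-\gamma')$ on $X\otimes X$. Axiom (d), which reads $\gamma+\gamma'=\id+\delta\mu$, forces $e_{00}=0$, so $\id_{X\otimes X}=e_{11}+e_{10}+e_{01}$. From $\mu\delta=\id$ we get that $\mu$ restricts to an isomorphism $\im(e_{11})\to X$ with inverse $\delta$, and since $\sigma$ acts as the identity on $\im(e_{11})=\im(\delta)$ we have $\im(e_{11})\subseteq\Sym^2{X}$. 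Writing $\Sym^2{X}=\im\bigl(\tfrac12(1+\sigma)\bigr)$ and $\lw^2{X}=\im\bigl(\tfrac12(1-\sigma)\bigr)$, one checks $\tfrac12(1+\sigma)e_{11}=e_{11}$ and $\tfrac12(1-\sigma)e_{11}=0$, so
\begin{align*}
\Sym^2{X}&=\im(e_{11})\oplus\im\bigl(\tfrac12(1+\sigma)(e_{10}+e_{01})\bigr),\\
\lw^2{X}&=\im\bigl(\tfrac12(1-\sigma)(e_{10}+e_{01})\bigr).
\end{align*}

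Finally, since $\sigma\gamma\sigma=\gamma'$, the symmetry $\sigma$ restricts on $P:=\im(e_{10})\oplus\im(e_{01})$ to an involution interchanging the two summands; because $2$ is invertible, its $+1$- and $-1$-eigenspaces inside $P$ are each carried isomorphically onto $\im(e_{10})$ by the idempotent $e_{10}$ (with inverse $x\mapsto x\pm\sigma x$). Hence both images appearing above are isomorphic to $\im(e_{10})$, which yields the canonical isomorphism $\Sym^2{X}\cong\im(e_{11})\oplus\im(e_{10})\cong\lw^2{X}\oplus X$. The final sentence of the proposition is then immediate from $\psi^2([X])=[\Sym^2{X}]-[\lw^2{X}]$, or from the criterion recalled at the start of \S\ref{s:initial}. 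The one genuinely non-formal step is the identity $\sigma\gamma\sigma=\gamma'$ — everything after that is bookkeeping with orthogonal idempotents — so that verification is where I would spend the most care.
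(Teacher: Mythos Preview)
Your proof is correct. The identity $\sigma\gamma\sigma=\gamma'$ that you flag as the crux is indeed the point: it follows from naturality of the symmetry together with commutativity of $\mu$ and cocommutativity of $\delta$, exactly as your informal Sweedler-style check indicates. Once that is in hand, the idempotent bookkeeping with $e_{11},e_{10},e_{01}$ and the vanishing of $e_{00}$ via axiom~(d) goes through as you describe.

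The paper's proof is more compressed and takes a slightly different packaging. Rather than decomposing $X\otimes X$ via the four idempotents built from the commuting pair $\gamma,\gamma'$, it simply writes down explicit maps
\[
\phi(x)=(4f\gamma e\,x,\ \delta\mu\,x),\qquad \psi(y,z)=e\gamma f\,y+z
\]
(with $e=\tfrac12(1+\tau)$, $f=\tfrac12(1-\tau)$) and asserts that a direct calculation shows they are mutually inverse. Unwinding that calculation one finds it uses exactly the ingredients you isolate: $\tau\gamma\tau=\gamma'$, $\gamma\gamma'=\delta\mu$, and axiom~(d). Indeed, on $\Sym^2X$ the map $4f\gamma e$ is $2(e_{10}-e_{01})$, which is precisely (up to a scalar) your isomorphism from the $+1$-eigenspace of $\sigma$ on $P$ to the $-1$-eigenspace. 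So the two arguments are the same computation viewed from different angles: the paper gives the answer and leaves the verification implicit, while your approach explains \emph{why} such maps exist by first exhibiting the canonical three-piece decomposition $X\otimes X=\im(e_{11})\oplus\im(e_{10})\oplus\im(e_{01})$ and then reading off how $\sigma$ permutes the pieces. Your version is a bit longer but more transparent about where the isomorphism comes from.
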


\begin{proof}
Let $e=\tfrac{1}{2} (1+\tau)$ and $f=\tfrac{1}{2} (1-\tau)$. We identify $\Sym^2{X}$ and $\lw^2{X}$ as summands of $X^{\otimes 2}$ via $e$ and $f$, and we identify $X$ with a summand of $X^{\otimes 2}$ via $\delta \mu$. We define maps
\begin{displaymath}
\phi \colon \Sym^2{X} \to \lw^2{X} \oplus X, \qquad x \mapsto (4f\gamma ex, \delta \mu x)
\end{displaymath}
\begin{displaymath}
\psi \colon \lw^2{X} \oplus X \to \Sym^2{X}, \qquad (y,z) \mapsto e \gamma f y + z.
\end{displaymath}
A straightforward calculation shows that $\phi$ and $\psi$ are inverse to one another.
\end{proof}

Define $\alpha \colon \bone \to X$ to be the composition
\begin{displaymath}
\xymatrix@C=3em{
\bone \ar[r]^-{\cv} & X^* \otimes X \ar[r]^-{1 \otimes \delta} & X^* \otimes X \otimes X \ar[r]^-{\ev \otimes \id} & X. }
\end{displaymath}
Analogously define a map $\beta \colon X \to \bone$. We say that $X$ is \defn{strict} if $\alpha$ and $\beta$ vanish. Note that if $X$ is a non-trivial simple then $X$ is automatically strict.

\begin{proposition} \label{prop:trace-gamma}
We have $\tr(\gamma)=\beta \alpha$ and $\tr(\gamma \tau) = \tr(\tau \gamma) = \dim{X}$. If $X$ is strict then $\dim{X}$ is either~0 or $-1$.
\end{proposition}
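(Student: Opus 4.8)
The plan is to establish the two trace identities in the first line of the statement by string-diagram manipulations, and then read off the last claim by applying $\tr$ to axiom~(d); both trace identities are cleanest if one first takes a partial trace.

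For $\tr(\gamma)=\beta\alpha$, write $\tr_1\colon\End(X\otimes X)\to\End(X)$ for the trace over the first tensor factor. Unwinding $\gamma=(\id_X\otimes\mu)(\delta\otimes\id_X)$, the first output strand of $\gamma$ is a leg of $\delta$, and closing it against the first input (which also feeds $\delta$) is precisely the operation used to define $\alpha$; hence $\tr_1(\gamma)=\mu\circ(\alpha\otimes\id_X)$. Taking the remaining trace, the closed-up diagram built from $\mu$ is the one defining $\beta$ (one uses commutativity of $\mu$ here to match the free leg), so $\tr(\gamma)=\tr\bigl(\mu\circ(\alpha\otimes\id_X)\bigr)=\beta\alpha$. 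This step is formal: it uses only rigidity and the commutativity of $\mu$. A short computation with naturality of $\tau$ together with the (co)commutativity axioms also gives $\gamma'=\tau\gamma\tau$, hence $\tr(\gamma')=\tr(\gamma)=\beta\alpha$, which I will use at the end.

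For $\tr(\gamma\tau)=\tr(\tau\gamma)=\dim X$: the first equality is cyclic invariance of the categorical trace. Taking the partial trace over the first factor and unwinding definitions, closing the first output of $\gamma\tau$ against the first input collapses the diagram to the single loop $\mu\circ\tau\circ\delta$, i.e.\ $\tr_1(\gamma\tau)=\mu\tau\delta$. By cocommutativity $\tau\delta=\delta$, and by axiom~(c) $\mu\delta=\id_X$, so in fact $\tr_1(\gamma\tau)=\id_X$, and therefore $\tr(\gamma\tau)=\tr(\id_X)=\dim X$.

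Finally, if $X$ is strict then $\alpha=\beta=0$, so $\tr(\gamma)=\tr(\gamma')=0$ by the above. Applying $\tr$ to axiom~(d), $\gamma+\gamma'=\id_{X\otimes X}+\delta\mu$, the left side has trace $0$ while the right side has trace $(\dim X)^2+\tr(\delta\mu)$; and $\tr(\delta\mu)=\tr(\mu\delta)=\tr(\id_X)=\dim X$ by cyclicity and axiom~(c). Hence $\dim X(\dim X+1)=0$, so $\dim X\in\{0,-1\}$. I expect the only real work to be the bookkeeping behind the two string-diagram identities $\tr_1(\gamma)=\mu\circ(\alpha\otimes\id_X)$ and $\tr_1(\gamma\tau)=\mu\tau\delta$ — keeping straight which tensor factor plays which role when the loops are closed; everything downstream of those is formal.
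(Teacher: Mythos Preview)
Your proof is correct and follows essentially the same approach as the paper's: compute the two trace identities via a partial trace (string-diagram) calculation, then take the trace of axiom~(d) to get $d^2+d=0$. The only cosmetic difference is that you trace over the first tensor factor (obtaining $\tr_1(\gamma)=\mu\circ(\alpha\otimes\id_X)$ and $\tr_1(\gamma\tau)=\mu\tau\delta$), whereas the paper traces over the second factor (obtaining $\ptr(\gamma)=(\id\otimes\beta)\delta$ and $\ptr(\tau\gamma)=\mu\tau\delta$); the downstream reasoning is identical.
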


\begin{proof}
Recall that the \defn{partial trace} of a map $f \colon X \otimes Y \to X \otimes Y$, denoted $\ptr(f)$, is the composition
\begin{displaymath}
\xymatrix@C=3em{
X \ar[r]^-{\id \otimes \cv_Y} & X \otimes Y \otimes Y^* \ar[r]^-{f \otimes \id} & X \otimes Y \otimes Y^* \ar[r]^-{\id \otimes \ev_Y} & X. }
\end{displaymath}
Note that $\tr(f) = \tr(\ptr(f))$. 
Now, we have $\ptr(\gamma) = (\id \otimes \epsilon) \delta$. Thus, $\tr(\gamma) = \tr \ptr(\gamma) = \beta\alpha$. In the diagram calculus for tensor categories this calculation becomes:
$$
\tr(\gamma) \; = 
\begin{tikzpicture}[baseline=0]
\draw [white, use as bounding box] (-1.5,-2.5) rectangle (1.5,2.5);
\node[circle,draw] (delta) at (-.5,-.5) {$\delta$};
\node[circle,draw] (mu) at (.5,.5) {$\mu$};
\node (topright) at (.5,1.25) {};
\node (topleft) at (-.5,1.25) {};
\node (bottomleft) at (-.5,-1.25) {};
\node (bottomright) at (.5,-1.25) {};
\draw[->,thick] (delta.45) -- (0,0);
\draw[thick] (0,0) -- (mu.225);
\draw[thick,->] (mu.90) -- (topright.center);
\draw[thick,->] (delta.90) to (topleft.center);
\draw[thick] (bottomleft.center) to (delta.270);
\draw[thick,<-] (mu.270) to (bottomright.center);
\draw[thick] (topright.center) to[out=75,in=285, looseness=4] (bottomright.center);
\draw[thick] (topleft.center) to[out=105,in=255, looseness=4] (bottomleft.center);
\end{tikzpicture}
= 
\begin{tikzpicture}[baseline=0]
\draw [white, use as bounding box] (-1.5,-1.5) rectangle (1.5,1.5);
\node[circle,draw] (delta) at (-.5,-.5) {$\delta$};
\node[circle,draw] (mu) at (.5,.5) {$\mu$};
\draw[->,thick] (delta.45) -- (0,0);
\draw[thick] (0,0) -- (mu.225);
\draw[->,thick] (mu.90) to[out=90, in=315, looseness = 6] (mu.315);
\draw[->,thick] (delta.135) to[out=135, in=270, looseness = 6] (delta.270);
\end{tikzpicture} = \; \beta \alpha.$$

We also have $\ptr(\tau \gamma) = \mu \tau \delta = \mu \delta = \id$, so $\tr(\tau \gamma) = \tr(\ptr(\tau\gamma)) = \tr(\id) = \dim{X}$. Or in the diagram calculus:
$$
\tr(\tau \gamma) \; = 
\begin{tikzpicture}[baseline=0]
\draw [white, use as bounding box] (-2,-2.5) rectangle (2,2.5);
\node[circle,draw] (delta) at (-.5,-.5) {$\delta$};
\node[circle,draw] (mu) at (.5,.5) {$\mu$};
\node (midright) at (.5,1.25) {};
\node (topright) at (.5,1.75) {};
\node (midleft) at (-.5,1.25) {};
\node (topleft) at (-.5,1.75) {};
\node (bottomleft) at (-.5,-1.25) {};
\node (bottomright) at (.5,-1.25) {};
\draw[thick] (midleft.center) to (topright.center);
\draw[thick] (midright.center) to (topleft.center);
\draw[->,thick] (delta.45) -- (0,0);
\draw[thick] (0,0) -- (mu.225);
\draw[thick,->] (mu.90) -- (midright.center);
\draw[thick,->] (delta.90) to (midleft.center);
\draw[thick] (bottomleft.center) to (delta.270);
\draw[thick,<-] (mu.270) to (bottomright.center);
\draw[thick] (topright.center) to[out=22.5,in=270, looseness=2] (bottomright.center);
\draw[thick] (topleft.center) to[out=157.5,in=270, looseness=2] (bottomleft.center);
\end{tikzpicture}
=
\begin{tikzpicture}[baseline=0]
\draw [white,use as bounding box] (-1,-2.5) rectangle (1,2.5);
\node[circle,draw] (delta) at (0,-1) {$\delta$};
\node[circle,draw] (mu) at (0,1) {$\mu$};
\draw[->,thick] (delta.135) to[out=135,in=315] (mu.315);
\draw[->,thick] (delta.45) to[out=45,in=225] (mu.225);
\draw[->,thick] (mu.80) to[out=80, in= 280, looseness=4] (delta.280);
\end{tikzpicture}
= \; \dim{X}.
$$

Suppose now that $X$ is strict. Then $\tr(\gamma)=0$, and so $\tr(\gamma')=0$ as well, since $\gamma'$ is conjugate to $\gamma$. It now follows from axiom (d) that $\id_{X \otimes X} + \delta \mu$ has vanishing trace. On the other hand, its trace is $d^2+d$, where $d=\dim{X}$. The result follows.
\end{proof}

\begin{remark} \label{rem:AssocNotNeeded}
Note that (co-)associativity, axiom (b), is not used in the above proof. The calculation of traces above only uses axioms (a) and (c), while the calculation of $\dim X$ also uses axiom (d).
\end{remark}

Recall that $\sC(\bR)$ decomposes as $L_{\bb} \oplus \bone \oplus L_{\ww}$. One can show that $L_{\bb}$ carries the structure of an o-Frobenius algebra, which is strict of dimension $-1$. In \cite{KS}, it is proved that this is the universal such algebra. That is, giving a tensor functor $\Phi \colon \cD \to \cC$ is equivalent to giving an o-Frobenius algebra in $\cC$ that is strict of dimension $-1$, via $\Phi \mapsto \Phi(L_{\bb})$.

\subsection{From o-Frobenius to ordered \'etale} \label{ss:ofrob-to-etale}

We have just seen that functors $\cD \to \cC$ correspond to both 1-Delannic algebras in $\cC$ and to strict o-Frobenius algebras in $\cC$ of dimension $-1$. Thus 1-Delannic algebras in $\cC$ correspond to such o-Frobenius algebras. We now explain how this works (in one direction), in the special case that is relevant to us. We include this discussion for two reasons. First, it is enlightening to understand how this correspondence works directly (i.e., without passing through $\cD$). And second, as a practical matter: as of the time of writing, the paper \cite{KS} has not yet appeared. The discussion in this section means that this paper only logically depends on the universal property from \cite{delmap}.

Let $X$ be an o-Frobenius algebra such that $X$ and $\lw^2{X}$ are simple, and $\lw^4{X} \ne 0$. A few observations. First, $X$ is not the trivial object and so $X$ is necessarily strict. Second, $\dim{X}=-1$ by Proposition~\ref{prop:trace-gamma}, since $\dim{X}=0$ is impossible (simples have non-zero dimensions). And third, $X$ is fixed by $\psi^2$ (Proposition~\ref{prop:o-frob-psi2}). We thus see that $X$ belongs to either Case~I or Case~II from \S \ref{ss:tri}. Define
\begin{displaymath}
A = X \oplus \bone \oplus X^*.
\end{displaymath}
The following is the main result we wish to explain. 

\begin{proposition} \label{prop:ofrob-to-etale}
The object $A$ is admits an \'etale algebra structure, which is unique up to isomorphism. As an \'etale algebra, it admits exactly two orders, which are reverses of each other. Under either order, it is a 1-Delannic algebra.
\end{proposition}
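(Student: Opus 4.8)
\textbf{Proof proposal for Proposition~\ref{prop:ofrob-to-etale}.}

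The plan is to build the \'etale algebra structure on $A = X \oplus \bone \oplus X^*$ directly out of the o-Frobenius data on $X$, rather than invoke the (not-yet-available) universal property from \cite{KS}. The key observation is that an o-Frobenius algebra is ``almost'' an \'etale algebra: the failure of $X$ itself to be an \'etale algebra is precisely the presence of the comultiplication term $\delta\mu$, and the defect is measured by axiom~(d). One should expect that adjoining the unit $\bone$ and the dual $X^*$ and assembling a suitable multiplication on the sum repairs this. Concretely, first I would write down the multiplication $m \colon A \otimes A \to A$: on the $X \otimes X$ summand it is essentially $\mu$ composed with the inclusion $X \hookrightarrow A$ (this is where commutativity and associativity of $\mu$ get used), the $\bone$ summand acts as the identity, and the cross terms $X \otimes X^*$, $X^* \otimes X$, and $X^* \otimes X^*$ are built from $\mu$, $\delta$, the duality maps $\ev,\cv$, and the canonical maps $\alpha \colon \bone \to X$, $\beta \colon X \to \bone$ of \S\ref{ss:ofrob} — one has enough room because $\dim X = -1$ forces $X \otimes X^* = \lw^2 X \oplus \lw^2 X^* \oplus X \oplus X^* \oplus \bone$ (Proposition~\ref{prop:case2}(b)) in Case~II, and similarly with $Y_1, Y_2$ in Case~I, so in particular $\Hom(X\otimes X^*, \bone)$ and $\Hom(X \otimes X^*, X)$ are one-dimensional and give the needed structure maps. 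The unit map is $\bone \hookrightarrow A$.

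Next I would verify commutativity and associativity of $m$. Commutativity reduces to cocommutativity/commutativity of $\mu$ and $\delta$ together with the compatibility of $\alpha,\beta$ with the symmetry, all of which are either axioms or formal consequences. Associativity is the real computation: on $X^{\otimes 3}$ it is associativity of $\mu$; the mixed associativity constraints involving $X^*$ are where axiom~(d), in the form $\gamma + \gamma' = \id + \delta\mu$ (equivalently Proposition~\ref{prop:gamma-idemp}: $\gamma\gamma' = \delta\mu$ and $\gamma,\gamma'$ commuting idempotents), is essential — this is the identity that makes the ``$\min$''-style multiplication of Example~\ref{ex:o-frob} associative, and it is exactly what one needs. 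I would organize this using the diagrammatic calculus, reducing each associativity pentagon on a summand of $A^{\otimes 3}$ to a planar identity that follows from (a)--(d). Then I would check that the trace pairing $A \otimes A \to \bone$ is perfect: since $A$ is manifestly self-dual ($X^* \oplus \bone \oplus X^{**} = X^* \oplus \bone \oplus X = A$) and the pairing is the obvious one pairing $X$ with $X^*$ and $\bone$ with $\bone$, nondegeneracy is immediate once one knows the pairing is a morphism of algebras in the appropriate sense; this makes $A$ \'etale.

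For uniqueness of the \'etale structure up to isomorphism, I would argue that any \'etale algebra structure on $A$ is determined by its restriction to the $X$ summand, which must recover an o-Frobenius structure on $X$ (using that the trace pairing identifies comultiplication with the dual of multiplication); the rescaling freedom of Remark~\ref{rem:rescaling} is absorbed by an automorphism of $A$, and the hom-space dimension counts above leave no further moduli. For the order: an order on an \'etale algebra is a choice of direct factor of $A \otimes A$ satisfying the total-order axioms (\S\ref{ss:order-etale}), and I would show the relevant factor is forced to be the one cut out by the idempotent $\gamma$ (or $\gamma'$) on the $X \otimes X^*$-part, so that there are exactly two such factors, $\gamma$ and $\gamma'$, interchanged by the symmetry — this is the order-reversal. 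Finally, to see either order is 1-Delannic, I would check the three numerical conditions of \S\ref{ss:order-etale}: $\dim A = \dim X + 1 + \dim X^* = -1 + 1 - 1 = -1$, and the two conditions on $\dim_A A^{(2)}$, where $A^{(2)}$ is the order factor of $A \otimes A$; these dimension-in-$A$-modules computations should come down to the trace identities $\tr(\gamma) = 0$ and $\tr(\tau\gamma) = \dim X = -1$ from Proposition~\ref{prop:trace-gamma}, suitably relativized. The main obstacle I anticipate is the associativity verification for $m$ on the mixed summands of $A^{\otimes 3}$ involving $X^*$: getting the structure constants in the cross-term multiplications exactly right so that axiom~(d) closes up the diagrams, rather than leaving an error term, is the delicate bookkeeping step, and it is also where the restriction to (essentially) Cases~I and~II — i.e.\ $\lw^4 X \neq 0$, which pins down $\dim X = -1$ and the decomposition of $X \otimes X^*$ — is doing real work.
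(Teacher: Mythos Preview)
Your construction of the multiplication on $A$ and the verification of associativity (with axiom~(d) closing up the mixed $XXX^*$ case), \'etaleness, and uniqueness are essentially what the paper does; the paper writes the $X\otimes X^*$ component of the product explicitly as $\delta^* - \ev + \mu^*$ and runs the same associativity analysis you sketch. (The maps $\alpha,\beta$ play no role here: they vanish since $X$ is a non-trivial simple.)

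The divergence, and the gap, is in the order and 1-Delannic steps. You want the order factor of $A\otimes A$ to be ``cut out by the idempotent $\gamma$ (or $\gamma'$) on the $X\otimes X^*$-part,'' but $\gamma,\gamma'$ are idempotent \emph{endomorphisms} of $X\otimes X$, not central algebra idempotents in $A\otimes A$; an order is by definition a direct \emph{factor algebra} of $A\otimes A$, and promoting $\gamma$ to such a thing --- and then verifying the total-order axioms, in particular transitivity, which is a statement about $A^{\otimes 3}$ --- is substantial work you have not outlined. The paper bypasses all of this: it computes $\dim\Gamma(A^{\otimes n})=1,3,13$ for $n\le 3$, passes through the equivalence $\Et(\cC)^{\op}\simeq\bS(G)$ to the $G$-set $U$ corresponding to $A$, and proves a purely combinatorial lemma that a $G$-set with $1,2,6$ orbits on $U^{[1]},U^{[2]},U^{[3]}$ (and $\fS_2$ swapping the two orbits on $U^{[2]}$) carries exactly two $G$-invariant total orders, reverses of each other. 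The 1-Delannic property then follows from \cite[Proposition~7.10]{delmap}, which requires only $\Gamma(A)=k$ and $\dim A=-1$; the relative dimensions $\dim_A A^{(2)}$ are never computed, so your proposed reduction to $\tr(\gamma)$ and $\tr(\tau\gamma)$ is neither needed nor clearly correct.
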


The proof will take the remainder of \S \ref{ss:ofrob-to-etale}. Write $\mu$ and $\delta$ for the given multiplication and co-multiplication on $X$. By adjunction, we can convert $\mu$ and $\delta$ into maps
\begin{displaymath}
\mu^* \colon X \otimes X^* \to X^*, \qquad
\delta^* \colon X \otimes X^* \to X.
\end{displaymath}
We now define an algebra structure on $A$. The summand $\bone$ is defined to be the unit, and $X$ and $X^*$ are defined to be (non-unital) subalgebras; here $X^*$ is an algebra by dualizing the co-multiplication on $X$. The multiplication map on $X \otimes X^* \subset A \otimes A$ is defined to be $\delta^* - \ev + \mu^*$, where $\ev$ is the evaluation map. The multiplication on $X^* \otimes X$ is defined similarly, so that the product on $A$ is commutative. These definitions are chosen to agree with what happens in the Delannoy category.

\begin{lemma} \label{lem:ofrob-to-etale-1}
The product on $A$ is associative.
\end{lemma}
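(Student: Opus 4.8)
The plan is to reduce associativity of the multiplication on $A=X\oplus\bone\oplus X^*$ to a short list of identities among maps $A^{\otimes 3}\to A$ obtained by restricting the three inputs to homogeneous summands, and then to verify each identity by unwinding the definitions of $\mu^*$ and $\delta^*$ and invoking the o-Frobenius axioms of Definition~\ref{defn:o-frob}. (Only the axioms will enter; the simplicity hypotheses on $X$ and $\lw^2 X$ and the nonvanishing of $\lw^4 X$ play no role in this lemma.) First I would dispose of the trivial cases: since $\bone$ is declared to be the unit of $A$, any bracketing with an input in the $\bone$-summand is automatically associative, so only inputs from $\{X,X^*\}$ matter. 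Second, I would use the evident symmetries of the construction — commutativity of the product, and the interchange $X\leftrightarrow X^*$ (under which $X^*$ with multiplication $\delta^\vee$ and comultiplication $\mu^\vee$ is again an o-Frobenius algebra, since the axioms are invariant under swapping multiplication and comultiplication and dualizing, and the algebra $A$ is built symmetrically from the two) — to cut the remaining homogeneous cases down to two: the triple $(X,X,X)$ and the orderings of $(X,X,X^*)$.

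The case $(X,X,X)$ is immediate: the two bracketings are $\mu(\mu\otimes\id)$ and $\mu(\id\otimes\mu)$, so it is exactly associativity of $\mu$, axiom~(b) (and dually $(X^*,X^*,X^*)$ is coassociativity of $\delta$). For $(X,X,X^*)$ I would expand both bracketings using the formula $\delta^*-\ev+\mu^*$ for the product on $X\otimes X^*\subseteq A\otimes A$, and compare the three resulting components landing in $X$, $\bone$, and $X^*$. Two of the three comparisons are routine: the $X^*$-component reads $\mu^*(\mu\otimes\id_{X^*})=\mu^*(\id_X\otimes\mu^*)$, i.e.\ that $\mu^*$ makes $X^*$ a module over $(X,\mu)$, which is axiom~(b) transported across the adjunction; and the $\bone$-component reads $\ev_X(\mu\otimes\id_{X^*})=\ev_X(\id_X\otimes\mu^*)$, which is just the adjunction defining $\mu^*$ (together with commutativity of $\mu$, used to reconcile its two natural adjoints).

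The one place I expect genuine work is the $X$-component of $(X,X,X^*)$-associativity, which after collecting signs takes the form
\begin{displaymath}
\delta^*(\mu\otimes\id_{X^*})=\mu(\id_X\otimes\delta^*)-(\id_X\otimes\ev_X)+\delta^*(\id_X\otimes\mu^*)
\end{displaymath}
as maps $X^{\otimes 2}\otimes X^*\to X$. The crux is to recognize this as a disguised form of axiom~(d). I would do so by transposing: bend the $X^*$-input into an extra $X$-output by composing with a coevaluation, using (co)commutativity to fix the conventions for $\delta^*$ and $\mu^*$. A short diagram chase then identifies the transpose of $\delta^*$ with $\delta$, the transpose of $\mu(\id_X\otimes\delta^*)$ with $\gamma'=(\mu\otimes\id)(\id\otimes\delta)$, the transpose of $\id_X\otimes\ev_X$ with $\id_{X\otimes X}$, and the transpose of $\delta^*(\id_X\otimes\mu^*)$ with $\gamma=(\id\otimes\mu)(\delta\otimes\id)$; so the displayed identity transposes to $\delta\mu=\gamma'-\id_{X\otimes X}+\gamma$, which is precisely axiom~(d). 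Equivalently, one can run the entire comparison in the graphical calculus already used in \S\ref{ss:ofrob}, where all of these transposes are visible at a glance and axiom~(d) appears by inspection.

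To finish, I would assemble the bookkeeping: the $\bone$-cases by the unit axiom, the triples $(X,X^*,X^*)$ and $(X^*,X^*,X^*)$ by the $X\leftrightarrow X^*$ symmetry, and the various orderings of the mixed triples by combining that symmetry with commutativity of the product. The upshot is that associativity of the product on $A$ is equivalent to axioms~(a), (b), and (d) of the o-Frobenius structure on $X$, all of which hold by hypothesis. The main obstacle — and really the only step with content — is the transposition argument showing that the $X$-valued part of the mixed associativity relation is exactly axiom~(d).
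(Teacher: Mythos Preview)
Your proposal is correct and follows essentially the same approach as the paper's proof: both reduce to the homogeneous triples in $\{X,X^*\}$, dispose of the pure cases by (co)associativity of $\mu$ and $\delta$, and handle the key mixed case $(X,X,X^*)$ by comparing the three target components, with the $X$-component identified (via adjunction/transposition) as axiom~(d) in the form $\delta\mu=\gamma+\gamma'-\id$. The paper is slightly more explicit in spelling out how the ordering $(X,X^*,X)$ is deduced from $(X,X,X^*)$ using commutativity alone, but this is exactly the bookkeeping you indicate.
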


\begin{proof}
We must show that the two multiplication maps $UVW \to A$ agree, where $U$, $V$, and $W$ can each be one of the three summands of $A$, and $UVW$ denotes their tensor product. This is clear if any one of $U$, $V$, or $W$ is $\bone$. It is also clear if $U$, $V$, and $W$ are all $X$ or all $X^*$, since the product on $X$ and $X^*$ is associative.

We now handle the case $U=V=X$ and $W=X^*$. The maps $UVW \to A$ have three components, corresponding to the three summands in the target. The $\bone$ components correspond, by adjunction, to two maps $XX \to X$, and one easily sees that both are equal to $-\mu$. The $X^*$ components correspond, by adjunction, to two maps $XXX \to X$, which one easily sees are the two maps built from $\mu$; they agree since $\mu$ is associative. Finally, the $X$ components correspond, by adjunction, to two maps $XX \to XX$. Call these two maps $f_1$ and $f_2$, where $f_1$ comes from multiplying $U$ and $V$ first, and $f_2$ from multiplying $V$ and $W$ first. One finds that $f_1=\delta \mu$ and $f_2 = \gamma+\gamma'-\id$. We have $f_1=f_2$ by Definition~\ref{defn:o-frob}(d), and so this case is complete.

The case $U=W=X$ and $V=X^*$ now formally follows using the fact that multiplication is commutative. Indeed, we have
\begin{displaymath}
(uv)w = w(uv) = (wu)v = (uw)v = u (wv) = u (vw).
\end{displaymath}
Here each expression denotes a map $UVW \to A$ built from multiplication and the symmetry, and we have only used commutativity of multiplication and the special case of associativity established in the previous paragraph.

All other remaining cases of associativity are similar to the above two.
\end{proof}

\begin{lemma} \label{lem:ofrob-to-etale-2}
The algebra $A$ is \'etale.
\end{lemma}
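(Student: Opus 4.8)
The plan is to show that the trace pairing $A \otimes A \to \bone$ is perfect, i.e., that the induced map $A \to A^*$ is an isomorphism. Since $A = X \oplus \bone \oplus X^*$ is self-dual in an obvious way, it suffices to understand the trace pairing explicitly on each pair of summands and check that the resulting ``matrix'' of pairings is nondegenerate. First I would record the general principle: for a commutative algebra $A$ with multiplication $m$, the trace form is $(x,y) \mapsto \tr(L_{xy})$ where $L_a$ is left-multiplication by $a$; equivalently it is the composition $A \otimes A \xrightarrow{m} A \xrightarrow{\text{(multiplication-trace)}} \bone$. So the computation reduces to understanding the linear functional $\lambda_A \colon A \to \bone$ sending $a$ to $\tr(L_a)$, together with the multiplication already defined on $A$.

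The key computational step is to evaluate $\lambda_A$ on the three summands. On $\bone$ it is multiplication by $\dim A = \dim X + 1 + \dim X^* = -1 + 1 - 1 = -1$ (using $\dim X = -1$ from the third observation preceding the lemma). On $X$, the functional $\lambda_A|_X \colon X \to \bone$ is, by unwinding definitions, built from the structure maps of the o-Frobenius algebra; I expect it to be (a scalar multiple of) the map $\beta \colon X \to \bone$ from \S\ref{ss:ofrob}, which vanishes because $X$ is strict (here is where strictness, hence simplicity of $X$, is used). The same holds for $X^*$ by duality. Thus $\lambda_A$ is supported entirely on the $\bone$-summand, where it is the isomorphism ``multiply by $-1$.'' Combined with the fact that the multiplication $A \otimes A \to A$ pairs $X$ with $X^*$ nondegenerately into $\bone$ — indeed the $\bone$-component of the product $X \otimes X^* \to A$ is $-\ev$, which is a perfect pairing — and pairs $\bone$ with $\bone$ into $\bone$ via the identity, one reads off that the trace form restricted to $(\bone \oplus X \oplus X^*)^{\otimes 2}$ is, in block form, the anti-diagonal pairing matching $\bone \leftrightarrow \bone$, $X \leftrightarrow X^*$, $X^* \leftrightarrow X$, each block an isomorphism. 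Hence the pairing is perfect.

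The main obstacle I anticipate is the bookkeeping in identifying $\lambda_A|_X$ with $-\beta$ (up to a harmless scalar): one must carefully compose the definition of $L_a$ for $a \in X$ — which involves the multiplication map $\mu^*\colon X \otimes X^* \to X^*$ on the $X^*$-summand, the map $\delta^* \colon X \otimes X^* \to X$ on... wait, rather the components $X \otimes X \to X$ (which is $\mu$) and $X \otimes X^* \to X$ (which is $\delta^*$) and $X \otimes \bone \to X$ (identity) — take the partial trace over $A$, and recognize the result. The cleanest route is to compute the three partial-trace contributions separately: the $\bone$-summand contributes $0$ (a map $X \to \bone$ that is $\tr$ of something factoring through the unit, giving $\alpha$-type maps), the $X$-summand contributes $\ptr(\mu\text{-precomposed})$, and the $X^*$-summand contributes $\ptr(\mu^*)$, and observe these assemble into $\beta$ via the adjunction identities; alternatively, one can sidestep this entirely by invoking the universal property of \S\ref{ss:ofrob}: the o-Frobenius structure on $X$ of dimension $-1$ gives a tensor functor $\cD \to \cC$ sending $\sC(\bR)$ (which is $L_{\bb} \oplus \bone \oplus L_{\ww}$, the image of our $A$) to an \'etale algebra, since $\sC(\bR)$ is \'etale in $\cD$ and tensor functors preserve \'etale algebras. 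I would present the direct computation as the primary argument for self-containedness, with the functorial argument as a remark.
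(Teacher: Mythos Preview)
Your approach is correct and follows the same overall strategy as the paper: compute the trace functional $\epsilon=\lambda_A$ on each summand, show it is supported on $\bone$ with value $\dim A=-1$, and then read off that the trace pairing is block anti-diagonal and hence perfect. However, your ``main obstacle'' is not an obstacle at all. The paper observes that $X$ is a non-trivial simple, so $\Hom(X,\bone)=0$ and therefore $\epsilon|_X=0$ automatically; there is no need to identify $\epsilon|_X$ with $\beta$ or trace through partial-trace computations. The same remark handles $X^*$, and the vanishing of the $X$--$X$ block follows because the product $X\otimes X\to A$ lands in $X$ (via $\mu$) and $\epsilon|_X=0$; the paper phrases this slightly differently, noting directly that $X^{\otimes 2}$ contains only the non-trivial simples $X$ and $\lw^2 X$, so there is no nonzero map $X^{\otimes 2}\to\bone$.

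One genuine issue: your proposed alternative argument via the universal property of \S\ref{ss:ofrob} is circular in context. The entire purpose of \S\ref{ss:ofrob-to-etale}, as stated explicitly in its opening paragraph, is to make the paper logically independent of \cite{KS}, which has not yet appeared. Invoking the tensor functor $\cD\to\cC$ produced by that universal property would reintroduce the dependence you are meant to be avoiding. Drop that remark entirely.
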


\begin{proof}
Let $\epsilon \colon A \to \bone$ be the trace map on $A$. Since $X$ is a non-trivial simple object, $\epsilon$ restricts to~0 on $X$ and $X^*$. We have $\epsilon(1)=\dim{A}=-1$, and so $\epsilon$ is non-zero on $\bone$. Now, it is clear that $\bone$ is orthogonal to $X$ and $X^*$ under the trace pairing on $A$ (since any map $X \to \bone$ is zero), and that this pairing is perfect on $\bone$. Since $X^{\otimes 2}$ only contains the simples $X$ and $\lw^2{X}$, which are non-trivial (\S \ref{ss:tri}), there is no non-zero map $X^{\otimes 2} \to \bone$, and so $X$ is self-orthogonal under the trace pairing on $A$. Similarly $X^*$ is self-orthogonal. The formulas for the multiplication on $A$ and for $\epsilon$ shows that the trace pairing between $X$ and $X^*$ is the canonical pairing. We thus see that the trace pairing on $A$ is perfect, and so $A$ is \'etale.
\end{proof}

Note that since the $\bone$ component of the trace is $-1$, the restriction of $\epsilon \circ \mu$ to $X \otimes X^*$ is $(-1) \cdot (-1) \cdot \ev = \ev$, which explains the appearance of $- \ev$ in the definition of multiplication above.

\begin{lemma} \label{lem:ofrob-to-etale-3}
The dimension of $\Gamma(A^{\otimes n})$ is 1, 3, and 13 for $n=1,2,3$.
\end{lemma}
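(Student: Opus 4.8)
The plan is to read off all three dimensions from the decomposition $A = X \oplus \bone \oplus X^\ast$, using only that $\Gamma = \Hom(\bone,-)$ records multiplicities of $\bone$, that $A$ is self-dual (indeed $A^\ast = X^\ast \oplus \bone \oplus X \cong A$, or invoke that $A$ is \'etale by Lemma~\ref{lem:ofrob-to-etale-2}), and the decompositions of $X \otimes X$ and $X \otimes X^\ast$ together with the pairwise non-isomorphism statements from \S\ref{ss:tri}. Recall that, as noted just before Proposition~\ref{prop:ofrob-to-etale}, $X$ is fixed by $\psi^2$ and lies in Case~I or Case~II, so all of those structural facts are available.

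First, the case $n = 1$: since $X$ and $X^\ast$ are non-trivial simple objects, $\Gamma(X) = \Gamma(X^\ast) = 0$, so $\Gamma(A) = \Gamma(\bone) = k$ is one-dimensional. Next, $n = 2$: by self-duality, $\Gamma(A^{\otimes 2}) = \Hom(A^\ast, A) = \End(A)$, and since $X$, $\bone$, $X^\ast$ are pairwise non-isomorphic simple objects this endomorphism algebra is $k^{\oplus 3}$, of dimension $3$. Finally, $n = 3$: again by self-duality $\Gamma(A^{\otimes 3}) = \Hom(A, A^{\otimes 2})$, which, decomposing $A = X \oplus \bone \oplus X^\ast$, equals the sum of the multiplicities of $X$, $\bone$, and $X^\ast$ in $A^{\otimes 2}$. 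The plan is to expand
\[
A^{\otimes 2} = X^{\otimes 2} \oplus 2\,(X \otimes X^\ast) \oplus (X^\ast)^{\otimes 2} \oplus 2X \oplus 2X^\ast \oplus \bone,
\]
substitute $X^{\otimes 2} = X \oplus (\lw^2 X)^{\oplus 2}$ (Proposition~\ref{prop:o-frob-psi2}) and $X \otimes X^\ast = \bone \oplus X \oplus X^\ast \oplus Y_1 \oplus Y_2$ (Propositions~\ref{prop:case1} and~\ref{prop:case2}), together with its dual for $(X^\ast)^{\otimes 2}$, and then read off: $\bone$ occurs $2\cdot 1 + 1 = 3$ times, $X$ occurs $1 + 2\cdot 1 + 2 = 5$ times, and $X^\ast$ likewise $5$ times (by the symmetry $X \leftrightarrow X^\ast$). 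Hence $\dim \Gamma(A^{\otimes 3}) = 5 + 3 + 5 = 13$.

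There is no substantive obstacle here; the computation is bookkeeping. The only point needing care is that one must know enough of the simple constituents $\bone, X, X^\ast, \lw^2 X, \lw^2 X^\ast, Y_1, Y_2$ are pairwise non-isomorphic to justify reading off multiplicities — in particular $X \not\cong X^\ast$, $X \not\cong \lw^2 X$, $X \not\cong \lw^2 X^\ast$, and $Y_1, Y_2 \not\cong \bone, X, X^\ast$ — all of which are supplied by the distinctness statements of \S\ref{ss:tri} (Propositions~\ref{prop:case1}(a,b), \ref{prop:case2}(a,b)). One mild subtlety: $Y_1, Y_2$ may coincide with $\lw^2 X$ or $\lw^2 X^\ast$ (they do in Case~II), so the count of the $X$- and $X^\ast$-multiplicities must invoke only $Y_i \not\cong \bone, X, X^\ast$, which holds uniformly in both cases; thus the answer $1, 3, 13$ is case-independent even though the full decomposition of $A^{\otimes 2}$ is not.
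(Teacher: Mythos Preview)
Your proof is correct. The approach is essentially the same as the paper's---expand $A^{\otimes n}$ using $A = X \oplus \bone \oplus X^*$ and count copies of $\bone$ using duality/adjunction and the known simple decompositions of $X^{\otimes 2}$ and $X \otimes X^*$---but you organize the bookkeeping slightly differently: you dualize one $A$-factor globally (rewriting $\Gamma(A^{\otimes 3})$ as $\Hom(A, A^{\otimes 2})$ and then counting the multiplicities of $X$, $\bone$, $X^*$ in $A^{\otimes 2}$), whereas the paper expands $A^{\otimes 3}$ fully and applies adjunction term-by-term to reduce quantities like $\Gamma(X^{\otimes 3})$ and $\Gamma(X^* \otimes X^{\otimes 2})$ to $\Hom(X^*, X^{\otimes 2})$ and $\Hom(X, X^{\otimes 2})$. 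Your version is a touch more economical (you never need the degree-three pieces $\Gamma(X^{\otimes 3})$, $\Gamma(X^* \otimes X^{\otimes 2})$ explicitly), at the cost of invoking the full $X \otimes X^*$ decomposition with its $Y_1, Y_2$ and the attendant distinctness checks, which you handle correctly and uniformly across Cases~I and~II.
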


\begin{proof}
We have
\begin{displaymath}
\dim \Gamma(X^{\otimes 2}) = \dim \Gamma(X^{\otimes 3}) = 0, \qquad \dim \Gamma(X^* \otimes X) = \dim \Gamma(X^* \otimes X^{\otimes 2}) = 1,
\end{displaymath}
as we now explain. We know the simple decompositions of $X^{\otimes 2}$ and $X^* \otimes X$ from \S \ref{ss:tri}, which explains those cases. Next, we have
\begin{displaymath}
\Hom(\bone, X^{\otimes 3}) = \Hom(X^*, X^{\otimes 2}) = 0
\end{displaymath}
and
\begin{displaymath}
\Hom(\bone, X^* \otimes X^{\otimes 2}) = \Hom(X, X^{\otimes 2}) = k,
\end{displaymath}
where, again, we are simply making use of the simple decomposition of $X^{\otimes 2}$. This explains the other two cases. The result now follows by simply expanding the tensor powers of $A^{\otimes n}$ and using the above formulas (and similar ones obtained from duality).
\end{proof}

For a set $U$, we let $U^{[n]}$ be the subset of $U^n$ where the coordinates are pairwise distinct. The symmetric group $\fS_n$ acts on $U^n$ by permuting coordinates, and the subset $U^{[n]}$ is stable under this action.

\begin{lemma} \label{lem:ofrob-to-etale-4}
Let $U$ be a $G$-set for some group $G$. Suppose that:
\begin{enumerate}[(i)]
\item The number of $G$-orbits on $U$, $U^{[2]}$, and $U^{[3]}$ is~1, 2, and 6.
\item The two $G$-orbits on $U^{[2]}$ are interchanged by $\fS_2$.
\item The cardinality of $U$ is at least four.
\end{enumerate}
Then either $G$-orbit on $U^{[2]}$ defines a total order on $U$.
\end{lemma}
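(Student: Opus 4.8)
The plan is to translate the two orbits on $U^{[2]}$ into a tournament relation on $U$ and then use the orbit count on $U^{[3]}$ to force that tournament to be transitive, hence a total order. Write $R, R'$ for the two $G$-orbits on $U^{[2]}$. Hypothesis (ii) says $R' = \{(y,x) : (x,y) \in R\}$, and since there are two distinct orbits this means that for every pair of distinct $x, y \in U$ exactly one of $(x,y) \in R$, $(y,x) \in R$ holds; i.e.\ $R$ is a total antisymmetric relation (a ``tournament''). So it suffices to show $R$ is transitive, since then both $R$ and its reverse $R'$ are strict total orders.

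To prove transitivity I would introduce the \emph{type} map $\tau \colon U^{[3]} \to T$, where $T$ is the $8$-element set of tournaments on the vertex set $\{1,2,3\}$, sending a triple $(u_1,u_2,u_3)$ to the tournament whose edge between $i$ and $j$ is oriented according to whether $u_i R u_j$ or $u_j R u_i$ (well-defined since the $u_i$ are distinct). As $R$ is $G$-stable, $\tau$ is $G$-invariant, hence constant on each of the six $G$-orbits of $U^{[3]}$, so $|\tau(U^{[3]})| \le 6$. As $\tau$ is also $\fS_3$-equivariant for the coordinate-permutation action on the source and the vertex-relabelling action on $T$, its image is a union of $\fS_3$-orbits in $T$. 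The $\fS_3$-orbits in $T$ are exactly the two directed $3$-cycles (one orbit of size $2$) and the six transitive tournaments, which correspond bijectively and $\fS_3$-equivariantly to the six linear orders of $\{1,2,3\}$ (one orbit of size $6$). Since $U^{[3]} \ne \emptyset$ and $|\tau(U^{[3]})| \le 6 < 8$, the image is either the orbit of size $6$ or the orbit of size $2$.

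If the image is the size-$6$ orbit, then no triple of distinct elements of $U$ induces a directed $3$-cycle; since a tournament with no directed $3$-cycle is transitive, $R$ is a strict total order and we are done. It remains to exclude the case that the image is the size-$2$ orbit, i.e.\ that \emph{every} triple of distinct elements induces a $3$-cycle, and this is exactly where hypothesis (iii) enters: choose four distinct elements and name three of them $x,y,z$ so that the induced $3$-cycle is $x\to y\to z\to x$; writing $w$ for the fourth element, the triple $\{x,y,w\}$ is a $3$-cycle containing the edge $x\to y$, which forces it to be $x\to y\to w\to x$, so $y\to w$. But then $\{y,z,w\}$ contains both $y\to z$ and $y\to w$, so $y$ has out-degree $2$ in it and it cannot be a $3$-cycle — a contradiction, so this case does not arise.

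I do not expect a serious obstacle here; the only point needing care is verifying the $\fS_3$-orbit decomposition of $T$ and the equivariance of $\tau$, after which the argument is immediate. I note that the count ``$6$'' is used only through the inequality $|\tau(U^{[3]})| \le 6$, and that the ``one orbit on $U$'' part of (i) is not actually needed for this argument.
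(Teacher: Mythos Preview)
Your proof is correct and follows the same approach as the paper: translate $R$ into a tournament, classify triples in $U^{[3]}$ by their induced tournament type on $\{1,2,3\}$, use the orbit bound together with $\fS_3$-equivariance to reduce to the dichotomy ``all triples transitive'' versus ``all triples cyclic,'' and rule out the cyclic case with four distinct elements. The only cosmetic differences are that you phrase things via the type map $\tau$ and its image (the paper instead partitions $U^{[3]}$ into pieces $V_\Gamma$ and counts $G$-orbits in each, using the exact value $6$ rather than just the inequality), and you spell out the four-element contradiction that the paper leaves as ``not difficult to show.''
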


\begin{proof}
Fix an orbit $R$ on $U^{[2]}$. For $x,y \in X$, define $x \le y$ if $(x,y) \in R$, or $x=y$. This relation is reflexive, anti-symmetric, and total. We must show that it is also transitive. Let $\Gamma$ be a tournament on the set $\{1,2,3\}$, i.e., a complete graph where each of the edges has been given a direction. Define $V_{\Gamma}$ to be the subset of $U^{[3]}$ consisting of tuples $(x_1, x_2, x_3)$ such that $x_i<x_j$ if there is an edge pointing from $i$ to $j$ in $\Gamma$. Clearly, $V_{\Gamma}$ is a $G$-stable subset and $U^{[3]}$ is the disjoint union of the $V_{\Gamma}$'s. Moreover, the action of $\fS_3$ on $U^{[3]}$ permutes the $V_{\Gamma}$'s in the natural manner.

Now, there are eight tournaments on three vertices, two of which are 3-cycles. Since $U^{[3]}$ has exactly six $G$-orbits, we see that there are two possibilities:
\begin{enumerate}
\item $V_{\Gamma}$ is empty if $\Gamma$ is a 3-cycle and a transitive $G$-set otherwise.
\item $V_{\Gamma}$ contains three $G$-orbits if $\Gamma$ is a 3-cycle and is empty otherwise.
\end{enumerate}
In case (a), the transitive law holds. We must therefore show that case (b) is impossible.

Suppose we are in case (b). Consider distinct elements $x,y,z \in U$ satisfying $x<y$ and $y<z$. We cannot have $x<z$, as this would imply that $(x,y,z)$ belongs to $V_{\Gamma}$ where $\Gamma$ is not a 3-cycle, but such $V_{\Gamma}$ are empty. We also cannot have $x=z$ since then we would have $x<y$ and $y<x$. We thus see that $z<x$. We therefore find that $<$ satisfies the following variant of transitivity: $x<y$ and $y<z$ implies $z<x$. However, it is not difficult to show that no such relation exists on a set with at least four elements, and this completes the proof.
\end{proof}

\begin{lemma} \label{lem:ofrob-to-etale-5}
The \'etale algebra $A$ carries two orders, which are reverses of each other. Under each, it is 1-Delannic.
\end{lemma}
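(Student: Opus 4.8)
The plan is to pass from $A$ to the $G$-set $U$ corresponding to it under the pre-Galois equivalence $\Et(\cC)^{\op} \simeq \bS(G)$ of \cite{discrete}, and then apply Lemma~\ref{lem:ofrob-to-etale-4}. Since $A$ is \'etale (Lemma~\ref{lem:ofrob-to-etale-2}) this is legitimate; moreover $A^{\otimes n}$ corresponds to $U^n$, and since each simple factor of an \'etale algebra has one-dimensional invariants, $\dim\Gamma(A^{\otimes n})$ equals the number of $G$-orbits on $U^n$. Decomposing $U^n$ according to coincidence patterns of the coordinates — $U^2 = U^{[2]} \amalg \Delta$ with $\Delta \cong U$, and $U^3 = U^{[3]} \amalg (\text{three copies of }U^{[2]}) \amalg \Delta_3$ with $\Delta_3 \cong U$ — and substituting the values $1, 3, 13$ from Lemma~\ref{lem:ofrob-to-etale-3}, I find that $G$ has $1$, $2$, and $6$ orbits on $U$, $U^{[2]}$, and $U^{[3]}$ respectively, which is hypothesis (i) of Lemma~\ref{lem:ofrob-to-etale-4}. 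Note also that $A$ itself then corresponds to a transitive $G$-set, so it is a simple \'etale algebra and $A \otimes A$ splits into exactly three simple \'etale factors: the diagonal factor $C_0$, which is the image of $\mu_A \colon A \otimes A \to A$ and hence isomorphic to $A$, together with two factors $C_1, C_2$ corresponding to the two orbits on $U^{[2]}$.

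Next I would verify hypotheses (ii) and (iii) of Lemma~\ref{lem:ofrob-to-etale-4}. For (ii): the symmetry $\tau$ of $A \otimes A$ is an algebra automorphism that fixes $C_0$ and permutes $\{C_1, C_2\}$; an algebra automorphism fixing a simple \'etale factor fixes that factor's unit and hence acts trivially on its invariants, so if $\tau$ fixed both $C_1$ and $C_2$ it would act as the identity on $\Gamma(A \otimes A)$, forcing $\Gamma(\lw^2{A}) = 0$. But $\lw^2{A} = \lw^2{X} \oplus \lw^2{X^*} \oplus X \oplus X^* \oplus (X \otimes X^*)$, and of these summands only $X \otimes X^*$ has invariants, with $\Hom(\bone, X \otimes X^*) = \Hom(X,X) = k$; thus $\dim\Gamma(\lw^2{A}) = 1$, and $\tau$ must swap $C_1$ and $C_2$. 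For (iii), that $|U| \geq 4$: if $|U| \leq 2$ then $U^{[3]}$ is empty and has no orbits, not $6$; and if $|U| = 3$ then the stabilizer in $G$ of a point of $U^{[3]}$ fixes all of $U$, so $G$ acts trivially on $U$, giving $6$ orbits on $U^{[2]}$ rather than $2$.

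With all three hypotheses in hand, Lemma~\ref{lem:ofrob-to-etale-4} tells us each orbit on $U^{[2]}$ is a $G$-invariant total order on $U$. Conversely, any $G$-invariant total order is a $G$-stable subset of $U^{[2]}$, hence a union of orbits; it is neither empty (totality would fail, as $|U| \geq 2$) nor all of $U^{[2]}$ (antisymmetry would fail), so it is precisely one of the two orbits. Hence $A$ has exactly two orders; since reversing an order corresponds to applying the coordinate swap $\tau$, which interchanges $C_1$ and $C_2$, the two orders are reverses of one another. Finally, for the $1$-Delannic conditions: the first, $\dim_\cC{A} = -1$, was already observed in the proof of Lemma~\ref{lem:ofrob-to-etale-2}; for the other two, the factor $A^{(2)}$ cutting out a chosen order is $C_1$ or $C_2$, and from $\dim_\cC(A \otimes A) = (\dim_\cC{A})^2 = 1$, the splitting $A \otimes A = C_0 \oplus C_1 \oplus C_2$, $\dim_\cC{C_0} = \dim_\cC{A} = -1$, and $\dim_\cC{C_1} = \dim_\cC{C_2}$ (as $\tau$ identifies them), one gets $\dim_\cC{C_1} = \dim_\cC{C_2} = 1$; using the relation $\dim_A{M} = \dim_\cC{M}/\dim_\cC{A}$ for a module $M$ over the \'etale algebra $A$ (see \cite{delmap}) — which sees only the underlying object of $M$ — we conclude $\dim_A{A^{(2)}} = 1/(-1) = -1$ for each of its two $A$-module structures. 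The only part requiring a genuine idea is hypothesis (ii) of Lemma~\ref{lem:ofrob-to-etale-4}, which I would get from the computation $\dim\Gamma(\lw^2{A}) = 1$ above; everything else is bookkeeping with orbit counts and dimensions.
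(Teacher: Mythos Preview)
Your proof is correct and follows essentially the same route as the paper: pass to the $G$-set $U$, use Lemma~\ref{lem:ofrob-to-etale-3} to count orbits on $U^n$ and hence on $U^{[n]}$, verify the hypotheses of Lemma~\ref{lem:ofrob-to-etale-4}, and conclude. The one substantive difference is the 1-Delannic check at the end: the paper simply invokes \cite[Proposition~7.10]{delmap}, which says that any ordered \'etale algebra with $\Gamma(A)=k$ and $\dim A=-1$ is automatically 1-Delannic, whereas you verify the two $A$-module dimension conditions directly via the formula $\dim_A M = \dim_\cC M/\dim_\cC A$; both arguments are valid, with the citation being quicker and your computation more self-contained. (You also spell out the verification of hypotheses (ii) and (iii) of Lemma~\ref{lem:ofrob-to-etale-4} more carefully than the paper, which glosses over (iii) entirely.)
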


\begin{proof}
Let $G$ be a pro-oligomorphic group such that $\Et(\cC)^{\op}$ is equivalent to $\bS(G)$, and let $U$ be the $G$-set corresponding to $A$. The number of $G$-orbits on $U^n$ is equal to the dimension of $\Gamma(A^{\otimes n})$, which is 1, 3, and 13 for $n=1,2,3$ by Lemma~\ref{lem:ofrob-to-etale-3}. We have isomorphisms
\begin{displaymath}
U^2 \cong U^{[2]} \amalg U, \qquad U^3 \cong U^{[3]} \amalg (U^{[2]})^{\amalg 3} \amalg U,
\end{displaymath}
from which we see that the number of $G$-orbits on $U^{[n]}$ is 1, 2, and 6 for $n=1,2,3$. It is clear from the formula for $A$ that the $\fS_2$ action on $\Gamma(A^{\otimes 2})$ is non-trivial, and so it follows that $\fS_2$ permutes the two $G$-orbits on $U^{[2]}$. Lemma~\ref{lem:ofrob-to-etale-3} thus shows that $U$ carries exactly two $G$-invariant total orders, which are reverses of each other. Equip $A$ with either order. Since $\Gamma(A)=k$ and $\dim{A}=-1$, it follows that $A$ is 1-Delannic by \cite[Proposition~7.10]{delmap}.
\end{proof}

\begin{lemma} \label{lem:ofrob-to-etale-6}
Any \'etale algebra structure on $A$ is isomorphic to the one constructed above.
\end{lemma}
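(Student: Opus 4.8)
The plan is to show that an arbitrary étale algebra structure on the object $A = X \oplus \bone \oplus X^*$ is $1$-Delannic, use Theorem~\ref{thm:delannic} to convert it into a tensor functor $\cD \to \cC$, and then compare this functor with the one attached to the algebra $A$ constructed above.

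Let $A'$ denote $A$ equipped with an arbitrary étale algebra structure. The numbers $\dim \Gamma((A')^{\otimes n})$ depend only on $A'$ as an object, so by the computation in the proof of Lemma~\ref{lem:ofrob-to-etale-3} they equal $1,3,13$ for $n=1,2,3$; and the $\fS_2$-action on $\Gamma((A')^{\otimes 2})$ is non-trivial, because the symmetry interchanges the copies of $\bone$ coming from $X \otimes X^*$ and from $X^* \otimes X$ while fixing the one from $\bone \otimes \bone$. The proofs of Lemmas~\ref{lem:ofrob-to-etale-4} and~\ref{lem:ofrob-to-etale-5} then apply verbatim: the $G$-set attached to $A'$ carries an invariant total order, and as $\Gamma(A')=k$ and $\dim A' = -1$, the algebra $A'$ is $1$-Delannic by \cite[Proposition~7.10]{delmap}. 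By Theorem~\ref{thm:delannic} there is a tensor functor $\Phi \colon \cD \to \cC$ with $\Phi(\sC(\bR)) \cong A'$ as étale algebras, so we may take $A' = \Phi(\sC(\bR))$. Now $\Phi(L_{\bb})$ is a summand of $A'$ of dimension $-1$, hence $X$ or $X^*$; since $\Pi$ interchanges $L_{\bb}$ and $L_{\ww}$ while reversing only the order and not the underlying étale algebra, after possibly replacing $\Phi$ by $\Phi\Pi$ we may assume $\Phi(L_{\bb}) = X$, so $\Phi(L_{\ww}) \cong X^*$. Running the same argument on the constructed algebra $A$---itself $1$-Delannic by Lemma~\ref{lem:ofrob-to-etale-5}---produces a tensor functor $\Phi_0 \colon \cD \to \cC$ with $\Phi_0(\sC(\bR)) \cong A$ and $\Phi_0(L_{\bb}) = X$. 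It now suffices to show $\Phi(\sC(\bR)) \cong \Phi_0(\sC(\bR))$ as étale algebras.

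For this, recall that a pre-Tannakian category has no nonzero proper tensor ideal ($\bone$ is a summand of $Y \otimes Y^*$ for every $Y \neq 0$), so every tensor functor out of $\cD$ is faithful. Since $L_{\bb}$ is fixed by $\psi^2$ and $\lw^2 L_{\bb} = L_{\bb\bb}$ is simple and not isomorphic to $L_{\bb}$, we have $L_{\bb}^{\otimes 2} = L_{\bb} \oplus L_{\bb\bb}^{\oplus 2}$; a short computation with simple decompositions in $\cD$ then shows that every component of the multiplication map of $\sC(\bR)$ lies in a $\Hom$-space of dimension at most one, with preferred generator built from the o-Frobenius maps $\mu^{\cD}, \delta^{\cD}$ on $L_{\bb}$ and the duality data: namely $\mu^{\cD}$ for $L_{\bb}^{\otimes 2} \to L_{\bb}$, the leg-rotations of $\delta^{\cD}$ and of $\mu^{\cD}$ for $L_{\bb} \otimes L_{\ww} \to L_{\bb}$ and $L_{\bb} \otimes L_{\ww} \to L_{\ww}$, the full dual of $\delta^{\cD}$ for $L_{\ww}^{\otimes 2} \to L_{\ww}$, and $\ev$ for $L_{\bb} \otimes L_{\ww} \to \bone$ (all other components vanish). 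By faithfulness, $\Phi(\mu^{\cD})$ is a nonzero element of the one-dimensional space $\Hom_\cC(X^{\otimes 2}, X)$, hence equals $\lambda\mu$ for some $\lambda \in k^{\times}$, where $\mu$ is the o-Frobenius multiplication on $X$; the relation $\mu^{\cD}\delta^{\cD} = \id$ then forces $\Phi(\delta^{\cD}) = \lambda^{-1}\delta$, and similarly $\Phi_0(\mu^{\cD}) = \lambda_0\mu$, $\Phi_0(\delta^{\cD}) = \lambda_0^{-1}\delta$. As $\Phi$ and $\Phi_0$ preserve composition, tensor products, and duals, the structure constants of $\Phi(\sC(\bR))$ relative to the fixed generators $\mu, \delta^{\vee}, \ev, \delta^*, \mu^*$ of the relevant $\Hom$-spaces in $\cC$ are obtained from those of $\Phi_0(\sC(\bR))$ by multiplying the $X \otimes X \to X$ and $X \otimes X^* \to X^*$ constants by $\lambda/\lambda_0$, the $X^* \otimes X^* \to X^*$ and $X \otimes X^* \to X$ constants by $\lambda_0/\lambda$, and leaving the $X \otimes X^* \to \bone$ constant unchanged. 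This is precisely the effect of the automorphism of the object $A$ that scales $X$ by $\lambda_0/\lambda$, scales $X^*$ by $\lambda/\lambda_0$, and fixes the $\bone$-summand; since it fixes $\bone$ it also respects the units. Therefore $\Phi(\sC(\bR)) \cong \Phi_0(\sC(\bR)) \cong A$, completing the proof.

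The main obstacle is the last step: verifying that each multiplication component of $\sC(\bR)$ really lies in the predicted one-dimensional $\Hom$-space and is a scalar multiple of the predicted generator (routine but somewhat tedious, using the decompositions of small tensor powers in $\cD$ from \cite{line}), and then tracking how the single scalar $\lambda$ propagates through all of the components, so that the discrepancy between $\Phi$ and $\Phi_0$ is absorbed by exactly one automorphism of the object $A$. The remaining ingredients---that any étale structure on $A$ is $1$-Delannic, and that tensor functors out of $\cD$ are faithful---are a re-reading of the preceding lemmas and a standard fact, respectively.
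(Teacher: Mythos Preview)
Your argument is correct, but it takes a genuinely different route from the paper's.

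The paper proceeds by a direct computation: after normalizing the unit, it observes that each component of an arbitrary multiplication on $A$ lies in a one-dimensional $\Hom$-space, writes the $X \otimes X$, $X^* \otimes X^*$, and $X \otimes X^*$ components with undetermined scalars $(a, a^*, b, b^*, c)$, and then imposes the associativity constraint on the triple $XXX^*$. This yields $a = b$, $a^* = b^*$, and $aa^* = c$ (the last step using that $\id$, $\delta\mu$, $\gamma$, $\gamma'$ satisfy only the single linear relation of Definition~\ref{defn:o-frob}(d)), after which a diagonal rescaling of $A$ normalizes everything to $(1,1,1)$. No appeal to Theorem~\ref{thm:delannic} is made.

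Your route instead observes that the earlier Lemmas~\ref{lem:ofrob-to-etale-3}--\ref{lem:ofrob-to-etale-5} depend only on $A$ as an object, so any \'etale structure $A'$ on $A$ is automatically $1$-Delannic; then you invoke Theorem~\ref{thm:delannic} to realize both $A'$ and the constructed $A$ as $\Phi(\sC(\bR))$ and $\Phi_0(\sC(\bR))$ for tensor functors with $\Phi(L_{\bb}) \cong X \cong \Phi_0(L_{\bb})$, and finally compare the two functors component by component. This is more conceptual and recycles more of the preceding work, and it has the pleasant feature of explaining \emph{why} the answer should be unique (the universal property pins it down). On the other hand, the paper's argument is more self-contained: it avoids the round trip through $\cD$ and the universal property, and the associativity computation it performs is exactly the same one you end up doing implicitly when you track how the single scalar $\lambda$ propagates through all five components via the o-Frobenius relations. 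In particular, your final bookkeeping step (that the discrepancy is absorbed by scaling $X$ by $\lambda_0/\lambda$ and $X^*$ by $\lambda/\lambda_0$) is essentially the same rescaling that closes the paper's proof.

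One small point worth tightening: when you write ``$\Phi(L_{\bb}) = X$'' you are implicitly fixing an isomorphism, and the canonical identification $\Phi(L_{\ww}) \cong X^*$ you use comes from duality applied to that choice; you should note that the algebra isomorphism $\Phi(\sC(\bR)) \cong A'$ furnished by Theorem~\ref{thm:delannic} respects the simple decomposition (since $X$, $\bone$, $X^*$ are pairwise non-isomorphic), so it differs from your chosen identification only by a diagonal automorphism, which is harmless for the conclusion.
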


\begin{proof}
The unit map is necessarily a scalar multiple of the inclusion $\bone \to A$; composing with a linear automorphism of $A$, we can assume it is the standard inclusion. The multiplication map $X \otimes X \to A$ is a scalar multiple $a$ of $\mu$, since the space $\Hom(X \otimes X, A)$ is one dimensional. Similarly, the multiplication map $X^* \otimes X^* \to A$ is a scalar multiple $a^*$ of the dual of $\mu$. Now, consider the multiplication map $X \otimes X^* \to A$. It has the form
\begin{displaymath}
b \delta^* - c\ev + b^* \mu^*
\end{displaymath}
for scalars $b$, $b^*$, and $c$. The trace on $A$ is a scalar multiple of the projection onto the $\bone$ summand, and so (as in the proof of Lemma~\ref{lem:ofrob-to-etale-2}) we must have $c \ne 0$ for the trace pairing to be non-degenerate.

We now analyze the associativity condition for the multiplication on $XXX^*$, as in the second paragraph of the proof of Lemma~\ref{lem:ofrob-to-etale-1}. The $\bone$ components of the two maps correspond, by adjunction, to two maps $XX \to X$, which are $-ac \mu$ and $-cb \mu$. From this, we conclude $a=b$, and a symmetrical argument gives $a^*=b^*$. The $X$ components correspond to two maps $XX \to XX$; label them $f_1$ and $f_2$, as in the proof of Lemma~\ref{lem:ofrob-to-etale-1}. We find $f_1=a a^* \delta \mu$ and $f_2=a a^* \gamma' + a a^* \gamma - c \cdot \id$, and so
\begin{displaymath}
c \cdot \id + aa^* \delta \mu = (a^*)^2 \gamma + aa^* \gamma'.
\end{displaymath}
By looking at the images of the idempotents $\id$, $\gamma$, and $\delta \mu$, we see that they are linearly independent, and thus Definition~\ref{defn:o-frob}(d) is the only linear relation between $\id$, $\delta \mu$, $\gamma$, and $\gamma'$, up to scaling. Since $c \ne 0$, we thus see that the above linear relation is simply the one in Definition~\ref{defn:o-frob}(d) scaled by $c$. We conclude $aa^*=c$. This shows $a \ne 0$ and $a^* \neq 0$.

Now $A$ has a $2$-parameter family of isomorphic algebra structures with fixed unit, by rescaling multiplication by $r$ on the $X$ and by $r^*$ on the $X^*$ factor. This shows that the algebra with scalars $(a,a^*,c)$ is isomorphic to the one with scalars $(ra, sa^*, rsc)$, so choosing $r = a^{-1}$, $s=(a^*)^{-1}$, we get that our algebra with scalars $(a, a^*, aa^*)$ is isomorphic to the one above with scalars $(1,1,1)$.
\end{proof}

\begin{remark}
The proof of Lemma~\ref{lem:ofrob-to-etale-6} shows that $A$ admits a unique \'etale algebra structure such that the standard inclusion of $\bone$ is the unit, the product on $X \otimes X$ is $\mu$, and the product on $X^* \otimes X^*$ is the dual of $\delta$. The third condition here could also be replaced with the condition: the trace pairing between $X$ and $X^*$ is the natural pairing.
\end{remark}

\begin{remark}
With more care, one can give a natural bijection between the two orders on $A$ and the two non-trivial simple constituents of $A$. In Delannoy, the simple constituents are the images of the indicator functions of $x \leq y$ and $x \geq y$ where $x$ is the input variable and $y$ the output variable, so we can assign the former simple to the ordering $<$ and the latter to the ordering $>$.
\end{remark}

\begin{remark}
Suppose $X$ is a strict o-Frobenius algebra of dimension $-1$, but allow $X$ and $\lw^2{X}$ to be non-simple. It is still true that $A=X \oplus \bone \oplus X^*$ naturally has the structure of a 1-Delannic algebra, though the proof is now somewhat more complicated. See \cite{KS} for details.
\end{remark}

\subsection{Kriz algebras}

We have seen two universal properties for the Delannoy category, one using ordered \'etale algebras and one using o-Frobenius algebras. In fact, there is yet another version, due to S.~Kriz \cite[\S 4.8]{kriz1}, which was historically first. For the sake of completeness, we briefly recall the statement. A \defn{Kriz algebra} in $\cC$ is an \'etale algebra $A$ of dimension $-1$ equipped with a decomposition
\begin{displaymath}
A = X \oplus \bone \oplus Y,
\end{displaymath}
where $X$ and $Y$ are (non-unital) subalgebras such that $X$ and $Y$ are isotropic and orthogonal to $\bone$ under the trace pairing\footnote{In characteristic~2, there is an additional condition; see \cite[\S 4.8]{kriz1}.}. These conditions imply that the trace pairing induces a perfect pairing between $X$ and $Y$. The algebra $\sC(\bR)$ in the Delannoy category is a Kriz algebra, with $X=L_{\bb}$ and $Y=L_{\ww}$. Kriz's theorem is that this Kriz algebra is universal, that is, tensor functors $\Phi \colon \cD \to \cC$ correspond to Kriz algebras in $\cC$, via $\Phi \mapsto \Phi(\sC(\bR))$. It follows from the universal property of $1$-Delannic algebras, that any $1$-Delannic algebra is Kriz, and the reverse follows from the universal property of Kriz algebras. It follows from both universal properties that an algebra is $1$-Delannic if and only if it is Kriz.

Note that the Kriz condition concerns subobjects of $A$, and thus concerns idepmpotents in $\Hom(A,A)$ under composition, while $1$-Delannic concerns idempotents in $\Gamma(A^{\otimes 2})$ under multiplication. Although the self-duality of $A$ identifies $\Hom(A,A)$ with $\Gamma(A^{\otimes 2})$ as vector spaces, this vector space isomorphism does not induce an isomorphism between these algebra structures. Instead in the Delannoy category, the minimal idempotents in $\Gamma(A^{\otimes 2})$ used to define its $1$-Delannic structure are $\delta_{x<y}$, $\delta_{y<x}$, and $1$, while the minimal idempotents in $\Hom(A,A)$ used to define its Kriz structure are $\delta_{x \leq y}$, $\delta_{y \leq x}$, and $\delta_{x=y} = \id.$ Because of this change of basis a direct proof that $A$ is $1$-Delannic if and only if it is Kriz is non-trivial.

\section{The main theorems} \label{s:main}

In this section we prove the main theorems of this paper. We fix a semi-simple pre-Tannakian category $\cC$ throughout \S \ref{s:main}.

\subsection{The key result} \label{ss:key}

We have seen that giving a functor $\cD \to \cC$ is equivalent to giving a certain kind of o-Frobenius algebra in $\cC$ (\S \ref{ss:ofrob}). On the other hand, Theorem~\ref{thm:C} states that if we have an object $X$ of $\cC$ satisfying certain character theoretic conditions then we get a functor $\cD \to \cC$. (By ``character theoretic'' we mean they take place at the level of the Grothendieck group.) The following theorem promotes the character theoretic conditions to the kind of structure required to apply the universal properties. It is the key result underlying the main theorems of this paper.

\begin{theorem} \label{thm:key}
Let $X$ be an object of $\cC$ such that the following conditions hold:
\begin{enumerate}[(i)]
\item $X$ is fixed by $\psi^2$.
\item $X$ and $\lw^2{X}$ are simple.
\item $X$ does not appear as a summand $\bS_{(2,1)}(X)$.
\end{enumerate}
Then $X$ admits an o-Frobenius structure, which is unique up to isomorphism.
\end{theorem}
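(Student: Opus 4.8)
The plan is to use conditions (i)--(iii) to pin the multiplication and comultiplication down to a one-parameter family, observe that commutativity, cocommutativity and axiom~(c) are then essentially forced, and deduce associativity and axiom~(d) from a representation-theoretic fact about $\fS_3$ together with the trace computations already available in Proposition~\ref{prop:trace-gamma}. First I would record the numerics. By Corollary~\ref{cor:case3}, condition~(iii) excludes Case~III, so $\lw^4 X \ne 0$ and the first lemma of \S\ref{ss:tri} applies: the simples $\bone,X,X^*,\lw^2 X,\lw^2 X^*$ are pairwise non-isomorphic. Since $X\not\cong\lw^2 X$ and, by the $\psi^2$-relation, $X^{\otimes 2}=X\oplus(\lw^2 X)^{\oplus 2}$, the spaces $\Hom(X^{\otimes 2},X)$ and $\Hom(X,X^{\otimes 2})$ are one-dimensional, and the unique copy of $X$ in $X^{\otimes 2}$ lies inside $\Sym^2 X=\lw^2 X\oplus X$; hence any nonzero $\mu\colon X^{\otimes 2}\to X$ is commutative and any nonzero $\delta\colon X\to X^{\otimes 2}$ is cocommutative. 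Fixing such $\mu_0,\delta_0$, the composite $\mu_0\delta_0\in\End(X)=k$ is the inclusion of the copy of $X$ followed by the projection back onto it, hence a nonzero scalar; rescaling, I may assume $\mu\delta=\id_X$. This also settles uniqueness: an o-Frobenius structure on $X$ must consist of scalar multiples of $\mu_0,\delta_0$ constrained by axiom~(c), and the residual one-parameter freedom is exactly the action of the scalar automorphisms of $X$ (Remark~\ref{rem:rescaling}). So the content is existence, i.e.\ verifying axioms~(b) and~(d) for the normalized pair $(\mu,\delta)$.

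The heart of the matter is associativity, and this is where~(iii) enters. Viewing $\Hom(X^{\otimes 3},X)$ as an $\fS_3$-representation via precomposition, it decomposes as $\bigoplus_{\lambda\vdash 3}\Hom(\bS_\lambda(X),X)\otimes V_\lambda$, so the standard $2$-dimensional representation occurs in it if and only if $X$ is a summand of $\bS_{(2,1)}(X)$. By~(iii) it does not, so every map $X^{\otimes 3}\to X$ fixed by the transposition $(12)$ is automatically fixed by all of $\fS_3$. Now $\mu(\mu\otimes\id)$ is fixed by $(12)$ because $\mu$ is commutative, hence is fixed by all of $\fS_3$, in particular by $(13)$; and a direct string-diagram check, again using only commutativity of $\mu$, gives $\mu(\mu\otimes\id)\circ(13)=\mu(\id\otimes\mu)$. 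Therefore $\mu(\mu\otimes\id)=\mu(\id\otimes\mu)$, which is associativity. Applying the same argument to $X^*$ (which also satisfies (i)--(iii)) and dualizing yields coassociativity.

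It remains to establish axiom~(d), for which I would first deduce $\dim X=-1$. Granted (a), (b), (c), the usual Frobenius manipulations give $\mu\gamma=\mu$, $\gamma\delta=\delta$ and $\gamma^2=\gamma$; since $\Hom(X,\lw^2 X)=\Hom(\lw^2 X,X)=0$, the idempotent $\gamma$ is block-diagonal with respect to $X^{\otimes 2}=X\oplus(\lw^2 X)^{\oplus 2}$ and acts as the identity on the $X$-block (from $\mu\gamma=\mu$). Hence its image is $X\oplus(\lw^2 X)^{\oplus m}$ with $m\in\{0,1,2\}$, and comparing dimensions with $\tr(\gamma)=\beta\alpha=0$ (Proposition~\ref{prop:trace-gamma}, using that $X$ is a nontrivial simple so $\beta=0$) forces $m(\dim X-1)=-2$, whence $m=1$ and $\dim X=-1$. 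Finally set $\theta=\gamma+\gamma'-\id_{X^{\otimes 2}}-\delta\mu$. Using (a)--(c) one checks $\mu\theta=0$ and $\theta\delta=0$, so $\theta$ is an endomorphism of $(\lw^2 X)^{\oplus 2}$, i.e.\ a $2\times 2$ matrix over $k$; it commutes with conjugation by $\tau$, which acts on $(\lw^2 X)^{\oplus 2}$ as $\mathrm{diag}(1,-1)$, so $\theta$ is diagonal; and $\tr(\theta)=-(\dim X)^2-\dim X=0$ together with $\tr(\tau\theta)=0$ — both from the identities $\tr(\gamma)=\tr(\gamma')=0$, $\tr(\tau\gamma)=\tr(\tau\gamma')=\dim X$ of Proposition~\ref{prop:trace-gamma}, plus $\tr(\tau)=\tr(\tau\delta\mu)=\dim X$ — force both diagonal entries of $\theta$ to vanish. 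Thus $\theta=0$, which is axiom~(d), completing the construction.

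The step I expect to be the crux — less an obstacle than the key idea — is the associativity argument: recognizing that condition~(iii) is exactly the statement that the standard summand of $\Hom(X^{\otimes 3},X)$ is absent, which is precisely what upgrades the automatic commutativity of $\mu$ to associativity (and which is a genuinely Schur-functor condition, not a condition on tensor powers alone). Everything after that is forced: coassociativity by duality, $\dim X=-1$ and axiom~(d) by the trace bookkeeping above, the latter being legitimate because the trace identities of Proposition~\ref{prop:trace-gamma} need only axioms~(a) and~(c) (Remark~\ref{rem:AssocNotNeeded}). The one point to watch is that the whole argument must run uniformly for $X$ in Case~I and in Case~IIa; the $\fS_3$-argument does, since it invokes only~(iii), and the dimension and trace computations are insensitive to the case as well.
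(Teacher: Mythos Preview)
Your proof is correct and follows essentially the same route as the paper: exclude Case~III via Corollary~\ref{cor:case3}, pin down $\mu,\delta$ up to scalar from the one-dimensionality of $\Hom(X^{\otimes 2},X)$, deduce (co)associativity from~(iii) by the $\fS_3$-argument on $\Hom(X^{\otimes 3},X)$ (this is exactly the paper's Lemma~\ref{lem:key-11}), obtain $\dim X=-1$ from $\tr(\gamma)=0$ and the block shape of $\gamma$, and finish axiom~(d) by trace computations. The only organizational difference is in that last step: the paper computes $f\gamma f$ and $e\gamma e$ separately (with $e,f=\tfrac12(1\pm\tau)$) and adds them, while you package the same trace identities as $\tr(\theta)=\tr(\tau\theta)=0$ for $\theta=\gamma+\gamma'-\id-\delta\mu$ restricted to the $(\lw^2 X)^{\oplus 2}$ block; these are equivalent.
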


The rest of \S \ref{ss:key} is devoted to the proof. We fix $X$ as in the statement of the theorem, which belongs to one of the three cases in \S \ref{ss:tri}. By Corollary~\ref{cor:case3}, $X$ does not belong to Case~III. Thus by Propositions~\ref{prop:case1} and~\ref{prop:case2}, we see that $\bone$, $X$, $X^*$, $\lw^2{X}$, and $\lw^2{X^*}$ are non-isomorphic simple objects. We have simple decompositions
\begin{displaymath}
\Sym^2(X) = \lw^2{X} \oplus X, \qquad X^{\otimes 2} = (\lw^2{X})^{\oplus 2} \oplus X.
\end{displaymath}
It follows that there are non-zero maps
\begin{displaymath}
\mu \colon X \otimes X \to X, \qquad \delta \colon X \to X \otimes X
\end{displaymath}
which are unique up to scaling. We fix $\mu$ and $\delta$ such that $\mu \delta = \id_X$. Note that $\delta \mu$ is an idempotent endomorphism of $X \otimes X$ with image $X$. Clearly, $\mu$ and $\delta$ are (co-)commutative. We must now verify the remaining axioms of Definition~\ref{defn:o-frob}.

\begin{lemma} \label{lem:key-11}
$\mu$ and $\delta$ are (co-)associative.
\end{lemma}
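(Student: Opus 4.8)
The plan is to deduce both assertions from a single observation about the $\fS_3$-module structure on $\Hom(X^{\otimes 3},X)$ and on $\Hom(X,X^{\otimes 3})$. Let $\fS_3$ act on $X^{\otimes 3}$ by permuting tensor factors. The $\fS_3$-equivariant Schur--Weyl decomposition reads
\[
X^{\otimes 3} \;\cong\; \Sym^3{X} \;\oplus\; \bigl(\bS_{(2,1)}(X)\otimes\rho_{\mathrm{std}}\bigr) \;\oplus\; \lw^3{X},
\]
where $\rho_{\mathrm{std}}$ is the $2$-dimensional standard representation of $\fS_3$ and where $\fS_3$ acts on the three summands through the trivial, standard, and sign characters respectively; this is legitimate since $6$ is invertible in $k$. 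Applying $\Hom(-,X)$ and invoking hypothesis (iii) — which, as $\cC$ is semisimple and $X$ is simple, says precisely that $\Hom(\bS_{(2,1)}(X),X)=0$ — we conclude that the induced $\fS_3$-action on $\Hom(X^{\otimes 3},X)$ factors through the sign homomorphism $\fS_3\to\{\pm1\}$. In particular, every element of $\Hom(X^{\otimes 3},X)$ is fixed by the two $3$-cycles in $\fS_3$.

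To prove associativity of $\mu$, set $f=\mu\circ(\mu\otimes\id_X)$ and $g=\mu\circ(\id_X\otimes\mu)$, the two maps $X^{\otimes 3}\to X$ whose coincidence is axiom (b). Let $\sigma\in\fS_3$ be the $3$-cycle sending $a\otimes b\otimes c$ to $b\otimes c\otimes a$ (it is a product of two transpositions, hence even). Commutativity of $\mu$ gives $g=f\circ\sigma$, since on elements $a(bc)=(bc)a$. As $\sigma$ is a $3$-cycle it acts trivially on $\Hom(X^{\otimes 3},X)$ by the previous paragraph, so $f=g$ and $\mu$ is associative.

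Coassociativity is handled by the mirror-image argument: the same Schur--Weyl decomposition together with $\Hom(X,\bS_{(2,1)}(X))=0$ (again hypothesis (iii)) shows that $\fS_3$ acts on $\Hom(X,X^{\otimes 3})$ through the sign character, so the $3$-cycles act trivially; and cocommutativity of $\delta$ shows that the two maps $X\to X^{\otimes 3}$ appearing in coassociativity differ by precomposition with a $3$-cycle, hence are equal. (Alternatively, $X^*$ also satisfies the hypotheses of Theorem~\ref{thm:key}, the map $\delta$ is a nonzero scalar multiple of the dual of the multiplication on $X^*$, and coassociativity is scale-invariant, so it follows formally from the associativity statement applied to $X^*$.)

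I do not expect any genuine obstacle here. The two points requiring care are the precise $\fS_3$-equivariance in the decomposition of $X^{\otimes 3}$ — which is standard — and the identity $g=f\circ\sigma$ with $\sigma$ an honest $3$-cycle, which is a one-line manipulation using commutativity of $\mu$. The role of hypothesis (iii) is exactly that it removes the standard representation, the unique irreducible of $\fS_3$ on which a $3$-cycle acts nontrivially; note also that the normalization $\mu\delta=\id_X$ is irrelevant to this lemma.
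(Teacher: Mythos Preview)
Your argument is correct and is essentially the same as the paper's: both exploit the Schur--Weyl decomposition of $X^{\otimes 3}$, use hypothesis~(iii) to eliminate the $\bS_{(2,1)}$ summand from $\Hom(X^{\otimes 3},X)$, and then invoke commutativity of $\mu$ to conclude. The only cosmetic difference is that the paper first shows $f$ factors through $\Sym^3 X$ (hence is fully $\fS_3$-invariant), whereas you observe more economically that the $\fS_3$-action on the Hom space factors through sign, so $3$-cycles act trivially; also, in your coassociativity paragraph ``precomposition'' should read ``postcomposition'' (the cycle acts on the target $X^{\otimes 3}$), but the intent is clear.
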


\begin{proof}
Let $f \colon X^{\otimes 3} \to X$ be the map $\mu \circ (\mu \otimes \id )$. We have a decomposition
\begin{displaymath}
X^{\otimes 3} = \Sym^3{X} \oplus \bS_{(2,1)}(X)^{\oplus 2} \oplus \lw^3{X}.
\end{displaymath}
Since $\mu$ is commutative, $f$ is invariant under the transposition $(1\;2)$, and so $f$ restricts to zero on $\lw^3{X}$. Since $X$ is not a constituent of $\bS_{(2,1)}(X)$, we see that $f$ also restricts to zero on the second summand above. It follows that $f$ factors through $\Sym^3(X)$, which means that it is invariant under the action of the symmetric group $\fS_3$. Since $\mu$ is $\fS_2$-invariant and $\mu \circ (\mu \otimes \id)$ is $\fS_3$-invariant, it follows that $\mu$ is associative. A similar argument shows that $\delta$ is co-associative.
\end{proof}

It remains to check Definition~\ref{defn:o-frob}(d). Let $\gamma$ and $\gamma'$ be as defined there, and let $\tau$ be the symmetry on $X^{\otimes 2}$. Note that $\gamma'$ is simply $\tau \gamma \tau$. Some of the general results we proved about o-Frobenius algebras can be easily deduced in our setting.

\begin{lemma} \label{lem:key-2}
We have the following:
\begin{enumerate}
\item The map $\gamma$ is idempotent.
\item We have $\gamma \cdot \delta \mu = \delta \mu$.
\item We have $\tr(\gamma)=0$ and $\tr(\gamma \tau)=\dim(X)$ for $i=1,2$.
\end{enumerate}
\end{lemma}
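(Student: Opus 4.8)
The plan is to derive all three items formally from the structure already in place — (co-)commutativity of $\mu$ and $\delta$, (co-)associativity (Lemma~\ref{lem:key-11}), and the normalization $\mu\delta=\id_X$ — with no further input needed beyond these. The two auxiliary identities I would establish first are
\[
\gamma\circ\delta=\delta,\qquad \mu\circ\gamma=\mu .
\]
Each is a one-line string-diagram computation: for the first, coassociativity replaces $(\delta\otimes\id_X)\circ\delta$ by $(\id_X\otimes\delta)\circ\delta$, after which $\mu\delta=\id_X$ collapses the inner portion to the identity; the second is the mirror computation using associativity in place of coassociativity.

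From these, parts (a) and (b) are immediate. For (b): $\gamma\circ(\delta\mu)=(\gamma\circ\delta)\circ\mu=\delta\mu$. For (a): expand $\gamma^2=(\id_X\otimes\mu)(\delta\otimes\id_X)(\id_X\otimes\mu)(\delta\otimes\id_X)$; the interchange law slides the two inner factors past one another so that the two copies of $\delta$ merge (after coassociativity) into a triple comultiplication and the two copies of $\mu$ into $\mu\circ(\id_X\otimes\mu)$, and then a single application of $\mu\delta=\id_X$ (equivalently, one use of $\mu\gamma=\mu$) reduces the composite back to $\gamma$. This is precisely the manipulation behind the idempotency assertion in the proof of Proposition~\ref{prop:gamma-idemp}, and it uses none of axiom~(d).

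For (c), I would first record that $X$ is a non-trivial simple: it is simple by hypothesis~(ii), and $X\neq\bone$ because $\lw^2{X}$ is simple, hence nonzero. Therefore $\Hom(\bone,X)=\Hom(X,\bone)=0$, so the maps $\alpha\colon\bone\to X$ and $\beta\colon X\to\bone$ of \S\ref{ss:ofrob} vanish and $X$ is strict. The trace identities $\tr(\gamma)=\beta\alpha$ and $\tr(\gamma\tau)=\tr(\tau\gamma)=\dim{X}$ of Proposition~\ref{prop:trace-gamma} then give $\tr(\gamma)=0$ and $\tr(\gamma\tau)=\dim{X}$. Since $\gamma'=\tau\gamma\tau$ is conjugate to $\gamma$ and $\tau^2=\id$, the same values hold with $\gamma$ replaced by $\gamma'$, namely $\tr(\gamma')=0$ and $\tr(\gamma'\tau)=\tr(\tau\gamma)=\dim{X}$.

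The one point needing care — the closest thing here to an obstacle — is a logical one: I am invoking pieces of Propositions~\ref{prop:gamma-idemp} and~\ref{prop:trace-gamma}, which are stated for genuine o-Frobenius algebras, at a stage of the proof of Theorem~\ref{thm:key} where axiom~(d) has not yet been checked (verifying~(d) is exactly the remaining task). So I would be explicit that only the parts of those proofs depending on axioms~(a)--(c) are used; for Proposition~\ref{prop:trace-gamma} this is exactly the content of Remark~\ref{rem:AssocNotNeeded}, and for Proposition~\ref{prop:gamma-idemp} one checks directly that idempotency of $\gamma$ rests only on (a)--(c). Beyond that, everything is a short diagram chase and there is no real difficulty.
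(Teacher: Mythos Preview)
Your proposal is correct and follows essentially the same approach as the paper: both argue by pointing back to the computations in Propositions~\ref{prop:gamma-idemp} and~\ref{prop:trace-gamma}, observing that those computations do not use axiom~(d), and noting that $X$ is a non-trivial simple so $\alpha=\beta=0$. Your explicit recording of the auxiliary identities $\gamma\delta=\delta$ and $\mu\gamma=\mu$, and your careful handling of the logical point about axiom~(d), are exactly in the spirit of the paper's brief proof.
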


\begin{proof}
(a) and (b) are direct calculations, as in Proposition~\ref{prop:gamma-idemp}, and (c) follows as in Proposition~\ref{prop:trace-gamma}. Note that none of these results rely on the condition Definition~\ref{defn:o-frob}(d). Also note that $\alpha$ and $\beta$ (defined before Proposition~\ref{prop:trace-gamma}) vanish in the present setting since $X$ is a non-trivial simple.
\end{proof}

\begin{lemma} \label{lem:key-3}
We have $\dim{X}=-1$.
\end{lemma}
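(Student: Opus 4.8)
The plan is to read off $\dim X$ from the single numerical identity $\tr(\gamma)=0$ of Lemma~\ref{lem:key-2}(c), by splitting $X^{\otimes 2}$ into its two isotypic pieces and noting that $\gamma$ respects this splitting in a very rigid way. The point to keep in mind is that this must be done \emph{without} using axiom~(d) of Definition~\ref{defn:o-frob}, which has not yet been verified at this stage; this is exactly why the statement cannot simply be quoted from Proposition~\ref{prop:trace-gamma} (cf.\ Remark~\ref{rem:AssocNotNeeded}).

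\textbf{Step 1: $\gamma$ is block-scalar.} Since $\mu\delta=\id_X$, the map $e:=\delta\mu$ is an idempotent whose image $W:=\im(e)=\im(\delta)$ is the $X$-isotypic summand of $X^{\otimes 2}$ (it is isomorphic to $X$, which occurs in $X^{\otimes 2}$ with multiplicity one), with complement $U:=\ker(e)\cong(\lw^2 X)^{\oplus 2}$. Coassociativity of $\delta$ together with $\mu\delta=\id$ gives $\gamma\delta=\delta$, hence $\gamma e=e$, so $\gamma$ restricts to the identity on $W$; dually, associativity of $\mu$ gives $\mu\gamma=\mu$, hence $e\gamma=e$, so $\gamma$ preserves $U$. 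Thus $\gamma=\id_W\oplus P$ for an endomorphism $P$ of $U$, and $P$ is idempotent because $\gamma$ is. Only (co)associativity (Lemma~\ref{lem:key-11}) and the normalization $\mu\delta=\id$ are used here, so this is legitimate.

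\textbf{Step 2: trace computation.} Because $\lw^2 X$ is simple, $\End(U)\cong M_2(k)$, and an idempotent of $M_2(k)$ has matrix-trace $r\in\{0,1,2\}$; correspondingly the categorical trace of $P$ is $r\cdot\dim(\lw^2 X)$. Hence
\[
0=\tr(\gamma)=\dim W+\tr(P)=\dim X+r\cdot\dim(\lw^2 X).
\]
From $X^{\otimes 2}=(\lw^2 X)^{\oplus 2}\oplus X$ we get $\dim(\lw^2 X)=\tfrac12\big((\dim X)^2-\dim X\big)$. Writing $d=\dim X$, which is nonzero since $X$ is a simple object, and using that the characteristic is not $2$, the identity above becomes $d\big(1+\tfrac{r}{2}(d-1)\big)=0$, i.e.\ $r(d-1)=-2$. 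Running through $r\in\{0,1,2\}$: $r=0$ is impossible, $r=2$ forces $d=0$ which is excluded, and $r=1$ gives $d=-1$. Therefore $\dim X=-1$.

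The only genuine content is Step~1, and there is essentially no obstacle there: it follows immediately from $\mu\delta=\id$ and (co)associativity. (As a consistency check, in Case~II of \S\ref{ss:tri} the equality $\dim X=-1$ is already contained in Proposition~\ref{prop:case2}(b); the computation above handles Case~I as well, uniformly.)
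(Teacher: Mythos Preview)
Your proof is correct and follows essentially the same approach as the paper's: both compute $\tr(\gamma)=0$ as $d+r\binom{d}{2}$ for some $r\in\{0,1,2\}$ coming from the $\lw^2 X$-isotypic piece, and then eliminate $r=0,2$ because $d\neq 0$. The only cosmetic difference is that you set up a full block decomposition $\gamma=\id_W\oplus P$ using both $\gamma e=e$ and $e\gamma=e$, whereas the paper uses only $\gamma\cdot\delta\mu=\delta\mu$ (its Lemma~\ref{lem:key-2}(b)) to conclude that $\im\gamma\supseteq X$, and then reads off the trace as the dimension of the image of the idempotent.
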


\begin{proof}
Let $d=\dim{X}$. The image of $\gamma$ is a subobject of $X \otimes X$ containing $X$ (by Lemma~\ref{lem:key-2}(b)), and thus of the form $X \oplus (\lw^2{X})^{\oplus r}$ where $r \in \{0,1,2\}$. Since $\gamma$ is idempotent, its trace is the dimension of its image. Since its trace is~0 (Lemma~\ref{lem:key-2}(c)), we find
\begin{displaymath}
0 = d+r \binom{d}{2}.
\end{displaymath}
If $r$ were~0 or~2, we would obtain $d=0$, which is not possible; a simple object in a semi-simple category has non-zero dimension. Thus $r=1$. The above equation then implies $d=-1$, since, as we have already remarked, $d=0$ is not possible.
\end{proof}

The following lemma completes the proof of the theorem.

\begin{lemma} \label{lem:key-4}
We have $\gamma+\gamma'=1+\delta m$.
\end{lemma}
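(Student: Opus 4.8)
The plan is to verify the identity directly in the five–dimensional algebra $\End(X^{\otimes 2})$ by a trace computation, \emph{without} invoking Definition~\ref{defn:o-frob}(d) (which is precisely what we are trying to establish). Set
\[
u = \gamma + \gamma' - \id_{X^{\otimes 2}} - \delta\mu \ \in\ \End(X^{\otimes 2}),
\]
so that the goal is $u=0$. First I would localize $u$ onto the $\lw^2 X$–isotypic part of $X^{\otimes 2}$. Recall the simple decomposition $X^{\otimes 2} = P \oplus W$, where $P \cong X$ is the multiplicity–one copy of $X$ and $W \cong (\lw^2 X)^{\oplus 2}$ is the $\lw^2 X$–isotypic summand; since $X\not\cong \lw^2 X$ (Propositions~\ref{prop:case1}(a),~\ref{prop:case2}(a)) the copy $P$ actually lies inside $\Sym^2 X$. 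The idempotent $\delta\mu$ has image $P$ and annihilates $W$ (its image meets $W$ trivially and $W$ is preserved by every endomorphism), so $\delta\mu$ is the projection of $X^{\otimes 2}$ onto $P$ along $W$. Using $\gamma\,\delta\mu = \delta\mu$ (Lemma~\ref{lem:key-2}(b)), the symmetric identity $\gamma'\,\delta\mu = \delta\mu$ (apply $\tau$ and $\tau\delta = \delta$), and $\mu\gamma = \mu\gamma' = \mu$ (Proposition~\ref{prop:gamma-idemp}; these use only axioms (a)--(c)), one checks $u\,\delta\mu = \delta\mu\,u = 0$. Hence $u$ kills $P$ and has image in $W$, i.e.\ $u$ restricts to an endomorphism of $W$ and is zero on $P$; in particular $u$ is determined by its image in $\End(W)\cong M_2(k)$.

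Next I would pin $u$ down using the symmetry $\tau$. From $\gamma' = \tau\gamma\tau$, $\tau^2 = \id$, $\tau\id\tau=\id$, and $\tau\,\delta\mu = \delta\mu\,\tau = \delta\mu$, we get $\tau u \tau = u$, so $u$ commutes with $\tau|_W$. Now $W = Q_1 \oplus Q_2$, where $Q_1$ is the copy of $\lw^2 X$ sitting inside $\Sym^2 X$ (so $\tau$ acts by $+1$ on it) and $Q_2 = \lw^2 X$ is the antisymmetric part (so $\tau$ acts by $-1$); both $Q_i$ are simple and isomorphic to $\lw^2 X$. Since $u$ commutes with $\tau|_W$ and $\tau|_W$ has the two distinct eigenvalues $\pm 1$ on the two simple summands, $u$ must preserve the decomposition $W = Q_1 \oplus Q_2$, acting by a scalar $p$ on $Q_1$ and a scalar $q$ on $Q_2$ (here $\mathrm{char}\,k\ne 2$ is used).

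Finally I would compute two categorical traces. On one hand,
\[
\tr(u) = \tr(\gamma) + \tr(\gamma') - \tr(\id_{X^{\otimes 2}}) - \tr(\delta\mu) = 0 + 0 - (\dim X)^2 - \dim X = 0,
\]
using $\tr(\gamma)=\tr(\gamma')=0$ (Lemma~\ref{lem:key-2}(c)) and $\dim X = -1$ (Lemma~\ref{lem:key-3}), which makes $(\dim X)^2 + \dim X$ vanish; and
\[
\tr(u\tau) = \tr(\gamma\tau) + \tr(\gamma'\tau) - \tr(\tau) - \tr(\delta\mu\,\tau) = \dim X + \dim X - \dim X - \dim X = 0,
\]
using $\tr(\gamma\tau)=\tr(\gamma'\tau)=\dim X$ (Lemma~\ref{lem:key-2}(c)), $\tr(\tau_{X,X}) = \dim X$, and $\delta\mu\,\tau = \delta\mu$. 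On the other hand, since $u$ vanishes on $P$ and acts by $p$ on $Q_1$ and $q$ on $Q_2$, we have $\tr(u) = (p+q)\dim\lw^2 X$ and $\tr(u\tau) = (p-q)\dim\lw^2 X$. As $\lw^2 X$ is simple, $\dim\lw^2 X\ne 0$, so $p+q = p-q = 0$, hence $p=q=0$ and $u = 0$. This is the desired equation $\gamma + \gamma' = \id_{X^{\otimes 2}} + \delta\mu$, completing the proof of Theorem~\ref{thm:key}.

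The individual calculations here are routine; the only genuinely load-bearing input is $\dim X = -1$, which is exactly what forces $\tr(u) = 0$ (mirroring the role of $d^2 + d$ in Proposition~\ref{prop:trace-gamma}). The main point to be careful about is the bookkeeping --- keeping straight which of the two isomorphic copies of $\lw^2 X$ in $X^{\otimes 2}$ is symmetric and which is antisymmetric --- together with the observation that this argument never touches axiom (d) of Definition~\ref{defn:o-frob}, so the relation is genuinely \emph{forced} by the constraints of (i)--(iii) rather than assumed.
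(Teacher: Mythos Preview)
Your proof is correct and follows essentially the same approach as the paper's: both arguments pin down the relation by trace computations in the five-dimensional algebra $\End(X^{\otimes 2})$, using the $\pm 1$ eigenspace decomposition for $\tau$ together with $\dim X=-1$. The paper computes $e\gamma e$ and $f\gamma f$ separately (with $e=\tfrac12(1+\tau)$, $f=\tfrac12(1-\tau)$) and adds them, while you work directly with the difference $u=\gamma+\gamma'-1-\delta\mu$, localize it to the $\lw^2 X$-isotypic block, and kill it with the two traces $\tr(u)$ and $\tr(u\tau)$; this is a clean repackaging of the same calculation rather than a different idea.
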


\begin{proof}
Let $f=\tfrac{1}{2} (1-\tau)$, which is an idempotent of $R=\End(X^{\otimes 2})$. The ring $fRf$ is the endomorphism algebra of $\lw^2{X}$, and thus one dimensional and spanned by $f$. We therefore have an equation
\begin{equation} \label{eq:f}
f \gamma f = c f
\end{equation}
for some scalar $c$. We have
\begin{displaymath}
\tr(f \gamma f) = \tfrac{1}{4} \tr(\gamma - \tau \gamma - \gamma \tau + \tau \gamma \tau) = \tfrac{1}{2},
\end{displaymath}
where we have used Lemmas~\ref{lem:key-2} and~\ref{lem:key-3}, and the cyclic invariance of trace. We also have
\begin{displaymath}
\tr(f) = \dim(\lw^2{X}) = 1,
\end{displaymath}
by Lemma~\ref{lem:key-3}. Comparing the traces on the two sides of \eqref{eq:f}, we conclude $c=\tfrac{1}{2}$.

Let $e=\tfrac{1}{2} (1+\tau)$. The ring $eRe$ is the endomorphism algebra of $\Sym^2{X}=\lw^2{X} \oplus X$, and is thus two dimensional with basis $e$ and $\delta \mu$. We have $\gamma \cdot \delta \mu = \delta \mu$ (Lemma~\ref{lem:key-2}), and so $e \gamma e - \delta \mu$ is an element of $eRe$ that is killed by $\delta \mu$. We therefore have an equation
\begin{equation} \label{eq:e}
e \gamma e - \delta \mu = c' (e-\delta \mu)
\end{equation}
for some scalar $c'$. We have
\begin{displaymath}
\tr(e \gamma e) = \tfrac{1}{4} \tr(\gamma + \tau \gamma + \gamma \tau + \tau \gamma \tau) = - \tfrac{1}{2},
\end{displaymath}
as before. We also have
\begin{displaymath}
\tr(\delta \mu) = \dim{X}=-1, \qquad \tr(e)=\dim(\Sym^2{X})=0.
\end{displaymath}
Comparing traces of the two sides in \eqref{eq:e}, we find $c'=\tfrac{1}{2}$.

Adding \eqref{eq:f} and \eqref{eq:e} gives the stated identity.
\end{proof}

\subsection{Theorem~\ref{thm:C}} \label{ss:thmC}

We break the proof of the theorem into two parts. Here is the first:

\begin{theorem} \label{thm:C-1}
Suppose that $X$ is an object of $\cC$ satisfying the following conditions:
\begin{enumerate}[(i)]
\item $X$ is fixed by $\psi^2$.
\item $X$ and $\lw^2{X}$ are simple.
\item $X$ is not a summand of $\bS_{(2,1)}(X)$.
\end{enumerate}
Then there is a faithful tensor functor $\Phi \colon \cD \to \cC$ satisfying $\Phi(L_{\bb})=X$, which is unique up to isomorphism.
\end{theorem}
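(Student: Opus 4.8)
The plan is to feed the o-Frobenius structure produced by Theorem~\ref{thm:key} into the \'etale-algebra universal property of $\cD$. Conditions (i)--(iii) are precisely the hypotheses of Theorem~\ref{thm:key}, so $X$ carries an o-Frobenius structure, unique up to isomorphism. I would first rule out Case~III: by Corollary~\ref{cor:case3}, an object in Case~III is a summand of $\bS_{(2,1)}(X)$, contradicting~(iii). Hence $\lw^4{X}\neq 0$ and $X$ falls under Case~I or Case~II (so in particular $X\not\cong X^*$, by \S\ref{ss:tri}). Now Proposition~\ref{prop:ofrob-to-etale} applies to the o-Frobenius algebra $X$: the object $A=X\oplus\bone\oplus X^*$ carries an \'etale algebra structure, unique up to isomorphism, and as an \'etale algebra it has exactly two orders, which are reverses of one another, with $A$ being $1$-Delannic under either. (If one is willing to cite \cite{KS}: since $X$ is a non-trivial simple it is strict, of dimension $-1$ by Proposition~\ref{prop:trace-gamma}, and the o-Frobenius universal property there delivers the functor at once; I route through \cite{delmap} instead so as to avoid that dependency.)

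For existence, fix one of the two orders on $A$ and apply Theorem~\ref{thm:delannic} to obtain a tensor functor $\Phi_0\colon\cD\to\cC$ with $\Phi_0(\sC(\bR))\cong A$ as $1$-Delannic algebras. Any tensor functor $F\colon\cD\to\cC$ is faithful: $\cD$ is semisimple, so $F$ is exact, and it annihilates no simple object, since $F(\bone)=\bone\neq 0$ and, for a non-trivial simple $S$, the object $F(S)\otimes F(S^*)=F(S\otimes S^*)$ contains $F(\bone)=\bone$. Applying $\Phi_0$ to $\sC(\bR)=L_{\bb}\oplus\bone\oplus L_{\ww}$ and cancelling $\bone$ (Krull--Schmidt) gives $\Phi_0(L_{\bb})\oplus\Phi_0(L_{\ww})\cong X\oplus X^*$; semisimplicity together with faithfulness then forces $\{\Phi_0(L_{\bb}),\Phi_0(L_{\ww})\}=\{X,X^*\}$, using $X\not\cong X^*$. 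Setting $\Phi_1=\Phi_0\circ\Pi$, where $\Pi$ is the order-reversing auto-equivalence of \S\ref{ss:order-etale} (which interchanges the summands $L_{\bb}$ and $L_{\ww}$ of $\sC(\bR)$), we have $\Phi_1(L_{\bb})=\Phi_0(L_{\ww})=\Phi_0(L_{\bb})^{*}$, so exactly one of $\Phi_0,\Phi_1$ sends $L_{\bb}$ to $X$; call it $\Phi$.

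For uniqueness, let $\Psi\colon\cD\to\cC$ be any tensor functor with $\Psi(L_{\bb})=X$. Then $\Psi(\sC(\bR))=\Psi(L_{\bb})\oplus\bone\oplus\Psi(L_{\bb})^{*}=A$, and as a $1$-Delannic algebra its underlying \'etale structure is isomorphic to the one of Proposition~\ref{prop:ofrob-to-etale} and its order is one of the two listed there. Hence, up to isomorphism, there are at most two $1$-Delannic algebras with underlying object $A$, and $\Phi_0(\sC(\bR))$ and $\Phi_1(\sC(\bR))$ realize both orders; so $\Psi(\sC(\bR))$ is isomorphic as a $1$-Delannic algebra to one of them, and by the bijectivity in Theorem~\ref{thm:delannic} we get $\Psi\cong\Phi_0$ or $\Psi\cong\Phi_1$. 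Since $\Psi(L_{\bb})=X$ and only one $\Phi_j$ has this property, $\Psi\cong\Phi$.

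No single step looks genuinely hard here; the real work is already contained in Theorem~\ref{thm:key} and Proposition~\ref{prop:ofrob-to-etale}. The main nuisance I anticipate is the bookkeeping with orders: keeping straight that the two orders on the \'etale algebra $A$ give the two functors $\Phi_0,\Phi_1$ (distinct because $X\not\cong X^*$), that $\Pi$ really does swap $L_{\bb}$ and $L_{\ww}$, and that "$1$-Delannic algebra with underlying object $A$'' means exactly "the essentially unique \'etale structure on $A$ together with one of its two orders''. Once that is pinned down, both existence and uniqueness fall out of Theorem~\ref{thm:delannic}.
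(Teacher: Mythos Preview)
Your proposal is correct and follows essentially the same route as the paper: invoke Theorem~\ref{thm:key} for the o-Frobenius structure, rule out Case~III via Corollary~\ref{cor:case3}, use Proposition~\ref{prop:ofrob-to-etale} to make $A=X\oplus\bone\oplus X^*$ a $1$-Delannic algebra, then apply Theorem~\ref{thm:delannic} and sort out the two orders using $\Pi$. The only cosmetic difference is that you name both functors $\Phi_0,\Phi_1$ from the outset, whereas the paper builds one and modifies it by $\Pi$ if needed; your faithfulness argument is a touch more explicit than the paper's one-line appeal to $\cD$ being semi-simple and rigid.
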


\begin{proof}
By Theorem~\ref{thm:key}, $X$ has the structure of an o-Frobenius algebra. By Proposition~\ref{prop:ofrob-to-etale}, $A=X \oplus \bone \oplus X^*$ has the structure of a 1-Delannic algebra. (Note that $X$ does not belong to Case~III, since that would imply $X$ is a summand of $\bS_{(2,1)}(X)$ by Proposition~\ref{prop:case3}.) The universal property of $\cD$ (Theorem~\ref{thm:delannic}) asserts that there is a tensor functor $\Phi \colon \cD \to \cC$ such that $\Phi(\sC(\bR))$ is isomorphic to $A$ as an ordered \'etale algebra. Comparing the simple decompositions, we see that $\Phi(L_{\bb})$ is either $X$ or $X^*$. In the latter case, reversing the order on $A$ yields a functor $\Phi$ with $\Phi(L_{\bb})=X$. Note that reversing the order is equivalent to pre-composing with $\Pi$, and $\Pi(L_{\bb})=L_{\ww}$; see the discussion following Theorem~\ref{thm:delannic} and \cite[Remark~5.1]{line}. The functor $\Phi$ is necessarily faithful since $\cD$ is a semi-simple rigid tensor category.

Suppose $\Phi'$ is a second tensor functor with $\Phi'(L_{\bb})=X$. Then $\Phi(\sC(\bR))$ and $\Phi'(\sC(\bR))$ are two ordered \'etale algebras with underlying object $X \oplus \bone \oplus X^*$. These algebras are isomorphic by Proposition~\ref{prop:ofrob-to-etale}; also, that proposition shows that (under the isomorphism) the orders are either the same or reverse of each other. By Theorem~\ref{thm:delannic}, we see that $\Phi$ is isomorphic to $\Phi'$ or $\Phi' \circ \Pi$. However, the latter is not possible, since $\Phi'(\Pi(L_{\bb}))=X^*$. Thus $\Phi$ and $\Phi'$ are isomorphic, as required.
\end{proof}

We note that if $X$ belongs to Case~I from \S \ref{ss:tri} then it fulfills the conditions of the theorem. In particular, if (i) and (ii) hold and either $X$ is fixed by $\psi^3$ or $\lw^3{X}$ is a non-trivial simple then $X$ belongs to Case~I (Propositions~\ref{prop:psi3-case1} and~\ref{prop:wedge3-case1}), and so the theorem applies.

\begin{corollary} \label{cor:C-1}
For $X$ as above, $\dim{X}=-1$, $X$ has super-exponential growth, and $X$ is fixed by all $\psi^p$ with $p$ different from the characteristic.
\end{corollary}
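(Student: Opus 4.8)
The plan is to read off all three assertions formally from the faithful tensor functor $\Phi \colon \cD \to \cC$ with $\Phi(L_{\bb}) = X$ supplied by Theorem~\ref{thm:C-1}, together with the known structure of $\cD$. The dimension is immediate: a tensor functor between rigid tensor categories preserves categorical dimensions, so $\dim{X} = \dim{\Phi(L_{\bb})} = \dim{L_{\bb}} = -1$, the last equality because $L_{\lambda}$ has dimension $(-1)^n$ when $\lambda$ has length $n$ \cite{line}. (This is also Lemma~\ref{lem:key-3}.)

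For super-exponential growth I would argue by contradiction. If $X$ does not have super-exponential growth, it has moderate growth; pass to the tensor subcategory $\cC' \subseteq \cC$ generated by $X$, which is again semi-simple pre-Tannakian, in which $X$ generates, is still fixed by $\psi^2$, and still has moderate growth (the length of an object of $\cC'$ is the same whether computed in $\cC'$ or in $\cC$, since $\cC$ is semi-simple and $\cC'$ is closed under summands). Proposition~\ref{prop:moderate} then identifies $\cC'$ with $\Rep(\Gamma)$ for a finite group scheme $\Gamma$, under which $X$ becomes a genuine representation $V$ with $-1 = \dim{X} = \dim_k{V} \ge 0$, absurd. (Alternatively, without the detour through $\cC'$: $L_{\bb}$ has super-exponential growth in $\cD$, faithfulness of $\Phi$ gives $\dim_k\End(X^{\otimes n}) \ge \dim_k\End(L_{\bb}^{\otimes n}) \ge \len(L_{\bb}^{\otimes n})$, and since $\dim_k\End(X^{\otimes n}) \le \len(X^{\otimes n})^2$ in a pre-Tannakian category, $\len(X^{\otimes n}) \ge \len(L_{\bb}^{\otimes n})^{1/2}$, which is still super-exponential.)

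For the Adams operations, $\Phi$ induces a ring homomorphism $\Phi_* \colon \rK(\cD) \to \rK(\cC)$ that commutes with every $\psi^p$, since $\psi^p$ is built from the exterior power operations and these are preserved by a symmetric tensor functor. All Adams operations on $\rK(\cD)$ being trivial \cite{line}, we conclude $\psi^p[X] = \psi^p\Phi_*[L_{\bb}] = \Phi_*\psi^p[L_{\bb}] = \Phi_*[L_{\bb}] = [X]$ for every prime $p$ different from the characteristic.

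The argument is essentially formal; the one point needing a word of care is the compatibility of $\psi^p$ with $\Phi$ when $k$ has positive characteristic and $p$ exceeds it, where the construction alluded to in \S\ref{ss:adams} is used — but that construction is still natural for symmetric tensor functors, so the final computation is unaffected. Apart from this, the growth statement is the only assertion not visible at the level of the Grothendieck ring, so it is the natural candidate for the main obstacle; in the end, however, Proposition~\ref{prop:moderate} together with $\dim{X}=-1$ dispatches it cleanly.
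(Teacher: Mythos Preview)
Your argument is correct and follows essentially the same approach as the paper, which states simply that the three properties hold for $L_{\bb}$ and are preserved by faithful tensor functors. You have merely spelled out in detail what that one-line proof asserts; both of your arguments for super-exponential growth are valid, with the alternative (comparing endomorphism dimensions via faithfulness) being closest in spirit to the paper's intended reasoning.
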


\begin{proof}
These properties hold for $L_{\bb}$ and are preserved by faithful tensor functors.
\end{proof}

\begin{corollary} \label{cor:no-case-IIa}
In the setting of \S \ref{ss:tri}, Case~IIa cannot occur.
\end{corollary}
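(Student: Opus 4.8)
The plan is to observe that the defining property of Case~IIa makes Theorem~\ref{thm:C-1} applicable, and then to play its conclusion off against Proposition~\ref{prop:psi3-case1}. So suppose, for contradiction, that $X$ belongs to Case~IIa, meaning $X$ is a summand of $\lw^3{X}$. Since Case~IIa is a sub-case of Case~II, Proposition~\ref{prop:case2}(c) applies: $X$ appears with multiplicity one in exactly one of $\lw^3{X}$ and $\bS_{(2,1)}(X)$, and with multiplicity zero in the other. As $X$ does appear in $\lw^3{X}$, it must be the one occurring with multiplicity zero in $\bS_{(2,1)}(X)$. Hence $X$ is not a summand of $\bS_{(2,1)}(X)$, and so conditions (i), (ii), (iii) of Theorem~\ref{thm:C-1} all hold for $X$.

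Now apply Theorem~\ref{thm:C-1} to obtain a faithful tensor functor $\Phi \colon \cD \to \cC$ with $\Phi(L_{\bb}) = X$. By Corollary~\ref{cor:C-1}, the class $[X]$ is then fixed by $\psi^3$. But $X$ satisfies (i) and (ii) and is fixed by $\psi^3$, so Proposition~\ref{prop:psi3-case1} forces $X$ to belong to Case~I, contradicting the assumption that $X$ belongs to Case~II. One could equally finish without Proposition~\ref{prop:psi3-case1}: knowing $[X]$ is fixed by $\psi^2$ and $\psi^3$, Remark~\ref{rmk:psi3} gives $\bS_{(2,1)}(X) = (\lw^3{X})^{\oplus 2} \oplus (\lw^2{X})^{\oplus 2}$, and since $X$ is a summand of $\lw^3{X}$ it is then a summand of $\bS_{(2,1)}(X)$, directly contradicting the multiplicity-zero statement established above.

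There is no genuine obstacle here: the entire content is the bookkeeping observation that ``$X$ is a summand of $\lw^3{X}$'' — the hypothesis of Case~IIa — is exactly what unlocks condition (iii) of Theorem~\ref{thm:C-1}, after which the structural results already in hand do all the work. The only point worth double-checking is that invoking Theorem~\ref{thm:C-1} (and hence Corollary~\ref{cor:C-1}) inside the case analysis is legitimate: the hypotheses of Theorem~\ref{thm:C-1} refer only to $X$, $\lw^2{X}$, and $\bS_{(2,1)}(X)$, and do not presuppose which of the three cases we are in, so there is no circularity.
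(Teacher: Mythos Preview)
Your proof is correct and follows essentially the same route as the paper's: assume Case~IIa, observe that the hypotheses of Theorem~\ref{thm:C-1} are met (you spell out via Proposition~\ref{prop:case2}(c) why condition~(iii) holds, which the paper leaves implicit), invoke Corollary~\ref{cor:C-1} to get $\psi^3$-invariance, and contradict via Proposition~\ref{prop:psi3-case1}. The alternative finish through Remark~\ref{rmk:psi3} is a nice extra observation but not needed.
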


\begin{proof}
Suppose $X$ belongs to Case~IIa. Then the hypotheses of Theorem~\ref{thm:C-1} are fulfilled, and so $X$ is fixed by $\psi^3$ by Corollary~\ref{cor:C-1}. But this implies that $X$ belongs to Case~I (Proposition~\ref{prop:psi3-case1}), a contradiction.
\end{proof}

\begin{remark}
Let $X$ and $\Phi$ be as in Theorem~\ref{thm:C-1}. In the Delannoy category, the dual of $L_{\bb}$ is $L_{\ww}$, and we have
\begin{displaymath}
L_{\bb} \otimes L_{\ww} = L_{\bb\ww} \oplus L_{\ww\bb} \oplus L_{\bb} \oplus L_{\ww} \oplus \bone
\end{displaymath}
by \cite[Theorem~7.2]{line}. Applying $\Phi$, we see that (up to relabeling) $Y_1=\Phi(L_{\bb\ww})$ and $Y_2=\Phi(L_{\ww\bb})$, where $Y_1$ and $Y_2$ are as in Proposition~\ref{prop:case1}. In particular, $Y_1=Y_2^*$.
\end{remark}

The following theorem is the second part of Theorem~\ref{thm:C}.

\begin{theorem} \label{thm:C-2}
Let $X$ be as in Theorem~\ref{thm:C-1}, and suppose $X$ generates $\cC$. Then there is an oligomorphic group $(G, \Omega)$ with a regular measure $\mu$ such that $\cC$ is equivalent to the tensor category $\uRep(G, \mu)$. Moreover, $G$ preserves a total order on $\Omega$ and acts transitively on $\Omega^{(n)}$ for $n=1,2,3$.
\end{theorem}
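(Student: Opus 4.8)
The strategy is to reconstruct $\cC$ from the étale algebra $A = X \oplus \bone \oplus X^*$ using the recognition theory for oligomorphic tensor categories. By Theorem~\ref{thm:key} the object $X$ carries an o-Frobenius structure, and Proposition~\ref{prop:ofrob-to-etale} then makes $A$ into a $1$-Delannic étale algebra; in particular $A$ is \emph{connected}, since $\Gamma(A) = \Hom(\bone, X \oplus \bone \oplus X^*) = k$. Because $X$ is a direct summand of $A$ and generates $\cC$, every object of $\cC$ is a subquotient of a direct sum of tensor products of $X$ and $X^*$, hence of a direct sum of the objects $A^{\otimes n}$; thus $A$ is a \emph{generating} connected étale algebra. (Note that the faithful functor $\Phi$ of Theorem~\ref{thm:C-1} is not directly useful here: faithfulness alone does not force $\Phi$ to be essentially surjective, so $\cC$ need not be equivalent to $\cD$.)

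I would then appeal to the reconstruction theorem of \cite{discrete}: a semi-simple pre-Tannakian category equipped with a connected generating étale algebra $A$ is equivalent to $\uRep(G, \mu)$, where $G$ is the pro-oligomorphic group with $\Et(\cC)^{\op} \simeq \bS(G)$ (\cite[Theorem~6.1]{discrete}), $\Omega$ is the transitive $G$-set corresponding to $A$, and $\mu$ is the measure recording the dimensions of étale algebras in $\cC$. Here $\mu$ is regular because the trace pairings in a pre-Tannakian category are non-degenerate, and $\uRep(G,\mu)$ is all of $\cC$ rather than merely the Karoubi envelope of $\uPerm(G,\mu)$ by semi-simplicity; moreover $G$ is genuinely oligomorphic, not just pro-oligomorphic, precisely because $A$ generates $\cC$, which forces $\Omega$ to generate $\bS(G)$, together with the finiteness $\dim \Gamma(A^{\otimes n}) < \infty$.

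For the ``moreover'' clause, observe that a $1$-Delannic algebra is in particular an ordered étale algebra (\S\ref{ss:order-etale}), so the order on $A$ transports to a $G$-invariant total order on $\Omega$; this is exactly the content of Lemma~\ref{lem:ofrob-to-etale-5}, now read with $G$ identified as above. For $3$-homogeneity: the number of $G$-orbits on $\Omega^n$ equals $\dim \Gamma(A^{\otimes n})$, which is $1, 3, 13$ for $n = 1,2,3$ by Lemma~\ref{lem:ofrob-to-etale-3}; peeling off the diagonals via $\Omega^2 \cong \Omega^{[2]} \amalg \Omega$ and $\Omega^3 \cong \Omega^{[3]} \amalg (\Omega^{[2]})^{\amalg 3} \amalg \Omega$ gives $1, 2, 6$ orbits on $\Omega^{[n]}$. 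Since $\Omega$ carries a $G$-invariant total order, sorting $n$-tuples of distinct elements identifies $\Omega^{[n]} \cong \fS_n \times \Omega^{(n)}$ as $G$-sets with $G$ acting only on the second factor, so the number of $G$-orbits on the set $\Omega^{(n)}$ of $n$-element subsets is $1, 2, 6$ divided by $n! = 1, 2, 6$, i.e.\ $1$ in each case; that is, $G$ acts transitively on $\Omega^{(n)}$ for $n = 1,2,3$.

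I expect the main obstacle to be the second paragraph --- matching the hypotheses of the recognition statement in \cite{discrete} precisely, in particular confirming that ``$X$ generates $\cC$'' is the right generation condition and that the $G$-set $\Omega$ produced by the abstract equivalence $\Et(\cC)^{\op} \simeq \bS(G)$ is indeed the one carrying the order and the homogeneity. Everything after that is the orbit-counting bookkeeping already performed in \S\ref{ss:ofrob-to-etale}.
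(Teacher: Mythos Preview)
Your proposal is correct and follows essentially the same route as the paper: build the \'etale algebra $A=X\oplus\bone\oplus X^*$, invoke the reconstruction theorem of \cite{discrete} to realize $\cC$ as $\uRep(G,\mu)$ with $\Omega$ corresponding to $A$, and then read off the order and 3-homogeneity from the orbit counts of \S\ref{ss:ofrob-to-etale}. Your orbit-counting for $\Omega^{(n)}$ via $\Omega^{[n]}\cong\fS_n\times\Omega^{(n)}$ is exactly right and is in fact spelled out more explicitly than in the paper.

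The one place where the paper is more careful is the step you flagged: passing from pro-oligomorphic to oligomorphic. The reconstruction theorem \cite[Corollary~1.4]{discrete} only produces a \emph{pro}-oligomorphic $G$, and ``$\Omega$ generates $\bS(G)$'' does not by itself force $G$ to act faithfully on $\Omega$. The paper fixes this by explicitly replacing $G$ with $G'=G/N$, where $N$ is the kernel of the action on $\Omega$; then $(G',\Omega)$ is genuinely oligomorphic, $\mu$ descends to a measure $\mu'$ on $G'$, and one checks that $\uRep(G',\mu')\to\cC$ is an equivalence because $A_\Omega$ generates $\cC$. Adding this quotient step would close the only real gap in your argument.
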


\begin{proof}
By \cite[Corollary~1.4]{discrete}, $\cC$ is equivalent to $\uRep(G, \mu)$ for some pro-oligomorphic group $G$ and regular measure $\mu$. The proof shows that we can take $G$ so that $\bS(G)$ is equivalent to $\Et(\cC)^{\op}$. For a $G$-set $X$, let $A_X$ denote the corresponding \'etale algebra of $\cC$. The measure $\mu$ is uniquely determined by $\mu(A_X)=\dim{A_X}$.

Let $\Omega$ be the $G$-set such that $A_{\Omega}$ is the \'etale algebra $X \oplus \bone \oplus X^*$. It follows from the results in \S \ref{ss:ofrob-to-etale} (and in particular Lemma~ \ref{lem:ofrob-to-etale-5}) that $G$ preserves a total order on $\Omega$, and acts transitively on $\Omega^{(n)}$ for $n=1,2,3$. Note that $G$ has finitely many orbits on $\Omega^n$ for all $n \ge 0$ since $\bS(G)$ is closed under finite products \cite[\S 2.3]{repst}.

We have proved the theorem, except with a pro-oligomorphic group instead of an oligomorphic group. We now explain how to actually get an oligomorphic group. Let $N$ be the normal subgroup of $G$ that acts trivially on $\Omega$, and let $G'=G/N$. Then $(G', \Omega)$ is oligomorphic. The measure $\mu$ induces a measure $\mu'$ on $G'$, and $\uRep(G', \mu')$ is a tensor subcategory of $\uRep(G, \mu)$. The resulting functor $\uRep(G', \mu') \to \cC$ is clearly fully faithful, and it is also essentially surjective since $A_{\Omega}$ belongs to its image, and is a generator for $\cC$.
\end{proof}

We also prove one additional result here. Recall $\HH$ is the non-abelian group of order~21 and $V$ is an irreducible three dimensional representation of it (see \S \ref{ss:case3}).

\begin{theorem} \label{thm:mod}
Suppose that $X$ is a generator of $\cC$ satisfying the following conditions:
\begin{enumerate}[(i)]
\item $X$ is fixed by $\psi^2$.
\item $X$ and $\lw^2{X}$ are simple.
\item $X$ has moderate growth.
\end{enumerate}
Then there is an equivalence $\Phi \colon \Rep(\HH) \to \cC$ such that $\Phi(V)=X$.
\end{theorem}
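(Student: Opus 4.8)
The key point is that, under the moderate growth hypothesis, $X$ cannot have dimension $-1$, whereas Cases~I and~II of the trichotomy of \S\ref{ss:tri} both force $\dim X=-1$; so $X$ must lie in Case~III, and then Proposition~\ref{prop:case3} gives exactly the asserted equivalence $\Phi\colon\Rep(\HH)\to\cC$ with $\Phi(V)=X$.

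In detail: first apply Proposition~\ref{prop:moderate}. Since $X$ generates $\cC$, is fixed by $\psi^2$, and has moderate growth, there is an equivalence $\cC\simeq\Rep(\Gamma)$ for a finite group scheme $\Gamma$ over $k$. The forgetful functor $\Rep(\Gamma)\to\Vec$ is a symmetric tensor functor, so it preserves categorical dimensions; hence $\dim X$ is the $k$-dimension of the underlying vector space, in particular a non-negative integer (and $\ge 2$, since $\lw^2 X$ is a nonzero simple). Since $X$ satisfies conditions (i) and (ii) of \S\ref{ss:tri}, the trichotomy applies: $X$ lies in exactly one of Cases~I, II, III.

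Now I rule out Cases~I and~II. Case~II is immediate: Proposition~\ref{prop:case2}(b) gives $\dim X=-1$, contradicting $\dim X\ge 0$. (Note this covers both IIa and IIb at once, so the separate impossibility of IIa is not needed here.) For Case~I, Proposition~\ref{prop:case1}(c) shows $X$ is not a summand of $\bS_{(2,1)}(X)$, so $X$ meets all three hypotheses of Theorem~\ref{thm:C-1}; that theorem yields a faithful tensor functor $\cD\to\cC$ sending $L_{\bb}$ to $X$, and Corollary~\ref{cor:C-1} then gives $\dim X=-1$ (and super-exponential growth), again a contradiction. Therefore $\lw^4 X=0$, i.e.\ $X$ is in Case~III, and Proposition~\ref{prop:case3} finishes the argument.

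I do not anticipate any real obstacle; the machinery is all in place. The one subtlety worth flagging is that in Case~IIb one has $X$ a summand of $\bS_{(2,1)}(X)$, so Theorem~\ref{thm:C-1} is unavailable there and Case~II cannot be excluded via the Delannoy functor — which is exactly why the right opening move is to pass to $\Rep(\Gamma)$ and use the sign of $\dim X$ instead. A second, purely expository, point is that one should say clearly why dimensions in $\Rep(\Gamma)$ are non-negative; this is immediate from the forgetful functor being a tensor functor.
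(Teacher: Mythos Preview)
Your proof is correct and follows essentially the same route as the paper: rule out Cases~I and~II of the trichotomy, leaving Case~III where Proposition~\ref{prop:case3} applies. The only cosmetic difference is that for Case~I the paper uses the super-exponential growth conclusion of Corollary~\ref{cor:C-1} to contradict moderate growth directly, whereas you use the $\dim X=-1$ conclusion to contradict non-negativity of the vector-space dimension; both are valid consequences of the same corollary.
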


\begin{proof}
The object $X$ belogns to one of the three cases from \S \ref{ss:tri}. In Case~I, $X$ is not a summand of $\bS_{(2,1)}(X)$ (Proposition~\ref{prop:case1}), and so Corollary~\ref{cor:C-1} shows that $X$ has super-exponential growth, a contradiction. In Case~II, we have an isomorphism
\begin{displaymath}
X \otimes X^* = \lw^2{X} \oplus \lw^2{X^*} \oplus X \oplus X^* \oplus \bone
\end{displaymath}
by Proposition~\ref{prop:case2}(b). However, since $X$ has moderate growth, we can regard $X$ as a representation of a finite group scheme (Proposition~\ref{prop:moderate}), and this gives a contraction: the above isomorphism would show that the vector space dimension of $X$ is $-1$. We must therefore belong to Case~III, and so the result follows from Proposition~\ref{prop:case3}.
\end{proof}

\subsection{Theorem~\ref{thm:A}}

We begin with a general observation about functors from $\cD$. 

\begin{proposition} \label{prop:del-full}
Let $\Phi \colon \cD \to \cC$ be a tensor functor, and put $X=\Phi(L_{\bb})$. Suppose all exterior powers of $X$ are simple. Then these simples are pairwise non-isomorphic and $\Phi$ is fully faithful.
\end{proposition}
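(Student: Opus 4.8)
The plan is to prove the statement by analyzing how $\Phi$ interacts with the combinatorics of simple objects in $\cD$. Recall that the simple objects of $\cD$ are indexed by words $\lambda$ in the alphabet $\{\bb, \ww\}$, that $\lw^n{L_{\bb}} = L_{\lambda_n}$ where $\lambda_n = \bb\bb\cdots\bb$ is the all-$\bb$ word of length $n$, and that $\dim L_{\lambda} = (-1)^{|\lambda|}$. So the hypothesis is that $\Phi(L_{\lambda_n})$ is simple in $\cC$ for every $n \geq 0$.

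First I would establish that the objects $\Phi(L_{\lambda_n})$ are pairwise non-isomorphic. This is immediate on dimension grounds if we could distinguish lengths, but $\dim L_{\lambda_n} = (-1)^n$ only separates even from odd, so a finer argument is needed. The clean way is to use growth: the object $\lw^n{X} = \Phi(L_{\lambda_n})$ appears as a summand of $X^{\otimes n}$, and since these are all distinct simples for $\cD$ with all exterior powers nonzero, $X$ has super-exponential growth. If $\Phi(L_{\lambda_m}) \cong \Phi(L_{\lambda_n})$ for some $m < n$, we would get a relation forcing the simple constituents of $X^{\otimes j}$ to be confined to a bounded set for $j$ in an arithmetic progression (using $\lw^n{X} \cong \lw^m{X}$ together with the identities of \S\ref{s:initial}), contradicting super-exponential growth. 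Alternatively — and perhaps more cleanly — one can observe that $\Phi$ is faithful (Theorem~\ref{thm:C-1} gives this, noting the hypotheses there are met since $\lw^3{X} = \Phi(L_{\bb\bb\bb})$ is a non-trivial simple, so Proposition~\ref{prop:wedge3-case1} puts us in Case~I), and a faithful tensor functor between semi-simple categories sends non-isomorphic simples to objects with disjoint simple constituents; since each $\Phi(L_{\lambda_n})$ is itself simple, they must be pairwise non-isomorphic.

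Next, for full faithfulness, since both categories are semi-simple it suffices to show that $\Phi$ induces an isomorphism $\Hom_{\cD}(S, T) \to \Hom_{\cC}(\Phi S, \Phi T)$ for all simple $S, T$; by semi-simplicity everything reduces to checking this on a generating family, and since $L_{\bb}$ generates $\cD$, it is enough to know the simple constituents of $L_{\bb}^{\otimes n}$ map to distinct simples. Every simple $L_{\lambda}$ of $\cD$ occurs in $L_{\bb}^{\otimes |\lambda|}$; the key point is that the multiplicity with which $\Phi(L_{\lambda})$ appears in $\Phi(L_{\bb})^{\otimes n} = X^{\otimes n}$ equals the multiplicity of $L_{\lambda}$ in $L_{\bb}^{\otimes n}$ — this follows because $\Phi$ is faithful (so multiplicities can only stay the same or the image could in principle pick up extra constituents, but faithfulness of a tensor functor on a semi-simple source forces the decomposition to be preserved: $\dim\Hom_{\cD}(L_{\lambda}, L_{\bb}^{\otimes n}) \leq \dim\Hom_{\cC}(\Phi L_{\lambda}, X^{\otimes n})$, and summing up the squares... ). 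Here is where I would be careful: I would argue that $\dim \Hom_{\cC}(X^{\otimes n}, X^{\otimes n}) = \dim \Hom_{\cD}(L_{\bb}^{\otimes n}, L_{\bb}^{\otimes n})$. One inequality ($\geq$) is faithfulness; for the reverse, note $X^{\otimes n}$ and its endomorphisms come from $\cD$ via $\Phi$, but $\cC$ might a priori have more. This is the main obstacle.

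The resolution: rather than comparing endomorphism algebras directly, I would use the fact that all exterior powers of $X$ being simple pins down enough structure. Concretely, $L_{\bb}^{\otimes n}$ and $\sC(\bR^{(n)})$ are related, and the decomposition of $X^{\otimes n}$ into simples is governed, via $\Phi$, by the decomposition in $\cD$ plus possibly additional "new" simples not in the image of $\Phi$; but a tensor functor whose source is generated by $L_{\bb}$ has image the tensor subcategory generated by $X$, which is all of $\cC$ by hypothesis, so every simple of $\cC$ is a constituent of some $X^{\otimes n}$ and hence is $\Phi$ of some simple of $\cD$ (using semi-simplicity and faithfulness: a simple summand of $\Phi(M)$ for $M$ semi-simple is $\Phi$ of a simple summand of $M$, because $\Phi(M) = \bigoplus \Phi(M_i)$ with $M_i$ simple and $\Phi$ faithful means $\Hom_{\cC}(\Phi M_i, \Phi M_i) = k$ forces $\Phi M_i$ simple — wait, that already needs what we want). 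The actual clean path, which I would take: since $\lw^n X$ is simple for all $n$, the functor $\Phi$ restricted to the full subcategory of $\cD$ on the objects $\{\lw^n L_{\bb}\}_{n \geq 0}$ and their sums is fully faithful by dimension count ($\Hom(\lw^m L_{\bb}, \lw^n L_{\bb})$ is $k$ if $m=n$ and $0$ otherwise in $\cD$, and the same holds for images since they are non-isomorphic simples). Then one bootstraps: $L_{\bb} \otimes \lw^n L_{\bb} = \lw^{n+1} L_{\bb} \oplus (\text{lower, governed by Pieri})$, and comparing Hom-spaces inductively on the word length, using that $\Phi$ is compatible with $\otimes$ and already fully faithful on exterior powers, extends full faithfulness to all of $\cD$ since the $L_{\lambda}$ are obtained from exterior powers by tensoring and taking summands. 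The induction step is where the real work lies, and I would present it as the heart of the proof; once $\Phi$ is fully faithful and $X$ generates, essential surjectivity onto a tensor subcategory closed under subquotients gives that $\Phi$ is an equivalence.
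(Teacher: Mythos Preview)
Your proposal has genuine gaps, and misses the single combinatorial fact that makes the proof short.

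First, your ``alternative'' argument for pairwise non-isomorphism is wrong. The claim that a faithful tensor functor between semi-simple categories sends non-isomorphic simples to objects with disjoint simple constituents is false: restriction $\Rep(\bZ/4\bZ) \to \Rep(\bZ/2\bZ)$ is a faithful tensor functor, yet two distinct characters of $\bZ/4\bZ$ can restrict to the same character of $\bZ/2\bZ$. Faithfulness only gives injectivity on $\Hom$ spaces, not the disjointness you assert. Your growth-based argument is in the right spirit but is never carried out; the reference to ``the identities of \S\ref{s:initial}'' is not enough, since those identities concern $\psi^2$-fixed objects and degree-$\le 4$ Schur functors, not arbitrary tensor powers.

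Second, your fullness argument is never completed. You correctly identify the obstacle (why can't $\Hom_{\cC}(X^{\otimes n}, X^{\otimes n})$ be larger?), then cycle through several approaches, each time either noticing circularity (``wait, that already needs what we want'') or deferring the work (``the induction step is where the real work lies, and I would present it as the heart of the proof''). You also drift into proving essential surjectivity and equivalence, which are neither hypotheses nor conclusions of this proposition; ``$X$ generates $\cC$'' is not assumed here.

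The missing ingredient is the Delannoy-specific fact
\begin{displaymath}
L_{\bb}^{\otimes n} \;=\; \bigoplus_{r=0}^n (\lw^r L_{\bb})^{\oplus c(n,r)}
\end{displaymath}
for certain multiplicities $c(n,r)$: tensor powers of $L_{\bb}$ decompose \emph{entirely} into its exterior powers (this follows from the fusion rules in \cite{line}). Applying $\Phi$ gives the same decomposition for $X^{\otimes n}$. Now both parts fall out immediately. If $\lw^n X \cong \lw^m X$ for some $n>m$, then every $X^{\otimes j}$ has all simple constituents among $\{\lw^i X : 0 \le i < n\}$, so $X$ has moderate growth, contradicting faithfulness of $\Phi$. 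For fullness, since the $\lw^r X$ are now known to be pairwise non-isomorphic simples, one reads off
\begin{displaymath}
\dim \Hom_{\cC}(X^{\otimes n}, X^{\otimes m}) = \sum_r c(n,r)\,c(m,r) = \dim \Hom_{\cD}(L_{\bb}^{\otimes n}, L_{\bb}^{\otimes m}),
\end{displaymath}
and adjunction extends this to mixed tensor powers $L_{\bb}^{\otimes a} \otimes L_{\ww}^{\otimes b}$. Since $\Phi$ is already injective on $\Hom$ and the dimensions agree, $\Phi$ is full on these objects, hence on all of $\cD$. No induction or bootstrap is needed.
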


\begin{proof}
We first recall two key facts about the Delannoy category relevant to this proof. First, the exterior powers of $L_{\bb}$ are distinct simple objects \cite[Proposition~8.5]{line}. And second, we have a decomposition
decomposition
\begin{displaymath}
L_{\bb}^{\otimes n} = \bigoplus_{r=0}^n (\lw^r{L_{\bb}})^{\oplus c(n,r)}
\end{displaymath}
for some multiplicities $c(n,r)$ by \cite[Theorem~7.2]{line} (or \cite[Corollary~8.11]{line} in characteristic~0). Applying $\Phi$, we obtain a similar decomposition of tensor powers of $X$.

We now show that the exterior powers of $X$ are pairwise non-isomorphic. Suppose, by way of contradiction, that $\lw^n{X}$ is isomorphic to $\lw^m{X}$ for some $n>m$. Let $\Sigma$ be the class of all objects in $\cC$ whose simple constituents are among $\lw^i{X}$ for $0 \le i<n$. Note that this contains $X^{\otimes i}$ for $0 \le i \le n$ by making use of the above decomposition, and the isomorphism $\lw^n{X}=\lw^m{X}$ in the case $i=n$. It follows that $\Sigma$ is closed under tensoring by $X$: indeed, if $0 \le i <n$ then $X \otimes \lw^i{X}$ is a subquotient of $X^{\otimes (i+1)}$, which we have just seen belongs to $\Sigma$. Since $\Sigma$ contains $\bone$, it thus contains $X^{\otimes i}$ for all $i$. We therefore see that only finitely many simples appear in tensor powers of $X$. This implies that $X$ has moderate growth, which is a contradiction (since $\Phi$ is necessarily faithful).

We now prove that $\Phi$ is full. Put
\begin{displaymath}
L^{n,m} = L_{\bb}^{\otimes n} \otimes L_{\ww}^{\otimes m}, \qquad
X^{n,m} = X^{\otimes n} \otimes (X^{\vee})^{\otimes m}.
\end{displaymath}
We have
\begin{displaymath}
\dim \Hom(L^{n,0}, L^{m,0}) = \sum_{r=0}^{\min(n,m)} c(n,r) c(m,r),
\end{displaymath}
where $c$ is as above. Since the simple decomposition of $X^{n,0}$ matches that of $L^{n,0}$, we obtain exactly the same formula for the dimension of $\Hom(X^{n,0}, X^{m,0})$. Next, observe that
\begin{displaymath}
\Hom(L^{a,b}, L^{c,d}) = \Hom(L^{a+d,0}, L^{b+c,0})
\end{displaymath}
by adjunction. There is a similar formula using $X$'s. We thus see that
\begin{displaymath}
\Phi \colon \Hom(L^{a,b}, L^{c,d}) \to \Hom(X^{a,b}, X^{c,d})
\end{displaymath}
is an isomorphism, since it is injective and the source and target have the same dimension. Since every simple of $\cD$ is a summand of some $L^{a,b}$, the result follows.
\end{proof}

\begin{proof}[Proof of Theorem~\ref{thm:A}]
Let $X$ be a generator for $\cC$ such that $X$ is fixed by $\psi^2$ and all exterior powers of $X$ are simple. By Proposition~\ref{prop:wedge3-case1}, $X$ belongs to Case~I from \S \ref{ss:tri}; note that $\lw^3{X}$ is not the tensor unit since then $\lw^4{X}$ would vanish\footnote{Alternatively, the proof of Proposition~\ref{prop:wedge3-case1} shows we are in Case~I or~III, and we are clearly not in~III.} \cite[Proposition~2.3.2]{CEN}. Thus $X$ does not appear in $\bS_{(2,1)}(X)$ (Proposition~\ref{prop:case1}). Applying Theorem~\ref{thm:C-1}, there is a tensor functor $\Phi \colon \cD \to \cC$ such that $\Phi(L_{\bb})=X$, which is unique up to isomorphism. By Proposition~\ref{prop:del-full}, $\Phi$ is fully faithful. Since $X$ is a generator for $\cC$, it follows that $\Phi$ is an equivalence.
\end{proof}

\subsection{Theorem~\ref{thm:B}} \label{ss:thmB}

Recall that $\rK_+(\cC)$ is the Grothendieck semi-ring of $\cC$. This is the sub-semi-ring of $\rK(\cC)$ consisting of effective classes. As with any semi-group, $\rK_+(\cC)$ carries a canonical order, via $y \le x$ if $x=y+z$ for some $z \in \rK_+(\cC)$. For objects $X$ and $Y$ of $\cC$, we have $[Y] \le [X]$ if and only if there is an isomorphism $X \cong Y \oplus Z$ for some object $Z$. In particular, $X$ is simple if and only if $[X]$ is a minimal non-zero element of $\rK_+(\cC)$.

We now show that $\cD$ is determined by its Grothendieck semi-ring.

\begin{theorem} \label{thm:recog}
Suppose $i \colon \rK_+(\cD) \to \rK_+(\cC)$ is a semi-ring isomorphism. Then there is an equivalence $\Phi \colon \cD \to \cC$ of tensor categories.
\end{theorem}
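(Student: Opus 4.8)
The plan is to use the semi-ring isomorphism $i$ to produce an object $X$ of $\cC$ satisfying the hypotheses of Theorem~\ref{thm:C-1}, obtain from it a faithful tensor functor $\Phi\colon\cD\to\cC$ with $\Phi(L_{\bb})=X$, and then upgrade $\Phi$ to an equivalence. Take $X$ with $[X]=i([L_{\bb}])$. Since $i$ is an isomorphism of ordered semi-rings it carries the minimal nonzero elements of $\rK_+(\cD)$ (the classes of simples) bijectively onto those of $\rK_+(\cC)$, so $X$ is simple; duals are determined by the semi-ring ($X^{*}$ is the unique simple $Y$ with $[\bone]\le[X][Y]$), so $[X^{*}]=i([L_{\ww}])$. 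Two facts will be used throughout. First, $i$ carries the simple basis to the simple basis, hence preserves total multiplicities; in particular the length of $X^{\otimes n}$ equals that of $L_{\bb}^{\otimes n}$, which grows super-exponentially, so $X$ has super-exponential growth and $\lw^{m}X\ne 0$ for all $m$ (\cite[\S4]{CEO}). Second, every simple $S$ of $\cC$ has $[S]=i([L_{\lambda}])$ for some weight $\lambda$, and $[L_{\lambda}]\le[L_{\bb}]^{a}[L_{\ww}]^{b}$ for suitable $a,b$, so $S$ is a summand of $X^{\otimes a}\otimes(X^{*})^{\otimes b}$; thus $X$ generates $\cC$.

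The main step --- and the one I expect to be the real obstacle --- is to show $X$ is fixed by $\psi^{2}$ and $\lw^{2}X$ is simple, i.e. that $i([\lw^{2}L_{\bb}])=[\lw^{2}X]$. Let $W$ be the simple with $[W]=i([L_{\bb\bb}])$; transporting $[L_{\bb}^{\otimes 2}]=2[L_{\bb\bb}]+[L_{\bb}]$ gives $X^{\otimes 2}\cong W^{\oplus 2}\oplus X$ with $W\ne X$. The symmetry on $X^{\otimes 2}$ commutes with the isotypic projections, acting as $\pm 1$ on the copy of $X$ and as an involution of $M_{2}(k)\cong\End(W^{\oplus 2})$, so $\lw^{2}X$ is one of six possibilities. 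Three of these are eliminated at once by the identity $\dim\Sym^{2}X-\dim\lw^{2}X=\dim X$, which in those cases forces $\dim W=0$ or $\dim X=0$ (using $\mathrm{char}\ne 2$). The possibility $\lw^{2}X\cong X$ forces $\dim X=3$, and then, via the plethysm $\lw^{2}(\lw^{2}X)=\bS_{(2,1,1)}(X)$ and the decomposition of $\lw^{2}X\otimes\lw^{2}X$, one sees $\lw^{4}X$ is a nonzero summand of $W^{\oplus 2}$, whose dimension is a nonzero multiple of $3$, contradicting $\dim\lw^{4}X=\binom{3}{4}=0$ (using $\mathrm{char}\notin\{2,3\}$). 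The remaining stubborn case is $\Sym^{2}X=0$; here the symmetry is $-\id$ on $X^{\otimes 2}$, which forces $\bS_{(2,1)}(X)=\Sym^{3}X=0$ and more generally $X^{\otimes n}\cong\lw^{n}X$, and I expect ruling this out to require exploiting the full fusion data carried by $i$ (comparing, e.g., decompositions in degree $\ge 3$) rather than a one-line numerical argument --- this is where the ``very strong constraints'' of the hypothesis really enter. Whatever the precise argument there, the outcome is $\lw^{2}X=W$ and $X$ fixed by $\psi^{2}$.

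It remains to run Theorem~\ref{thm:C-1} and conclude. Having $X$ fixed by $\psi^{2}$ with $X,\lw^{2}X$ simple, if $X$ were a summand of $\bS_{(2,1)}(X)$ then from $X^{\otimes 3}=\Sym^{3}X\oplus\bS_{(2,1)}(X)^{\oplus 2}\oplus\lw^{3}X$ the multiplicity of $X$ in $X^{\otimes 3}$ would be $\ge 2$; but (using triviality of $\psi^{2},\psi^{3}$ on $\cD$) that multiplicity equals the multiplicity of $L_{\bb}$ in $L_{\bb}^{\otimes 3}$, namely $1$, a contradiction. So hypothesis (iii) of Theorem~\ref{thm:C-1} holds, and we obtain a faithful $\Phi\colon\cD\to\cC$ with $\Phi(L_{\bb})=X$. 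Since $i$ matches the simple decomposition of $X^{\otimes n}$ with that of $L_{\bb}^{\otimes n}$ term by term, we get $\dim_{k}\End_{\cC}(X^{\otimes n})=\dim_{k}\End_{\cD}(L_{\bb}^{\otimes n})$; as $\Phi$ is faithful and $\Phi(L_{\bb}^{\otimes n})=X^{\otimes n}$, it is an isomorphism on these endomorphism algebras, and the adjunction argument from the proof of Proposition~\ref{prop:del-full} then makes $\Phi$ full on all $\Hom$-spaces between tensor powers of $L_{\bb}$ and $L_{\ww}$, hence fully faithful. Finally, the essential image of a fully faithful tensor functor is a tensor subcategory closed under summands, and it contains the generator $X$, so it is all of $\cC$; thus $\Phi\colon\cD\to\cC$ is an equivalence.
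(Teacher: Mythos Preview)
Your overall strategy matches the paper's, and most steps are correct, but there is a genuine gap that you explicitly acknowledge: the case $\Sym^2 X = 0$. You write ``I expect ruling this out to require exploiting the full fusion data\ldots\ Whatever the precise argument there\ldots'' --- this is not a proof, and the route you anticipate (comparing higher-degree decompositions) is unnecessarily involved. The actual fix is a single observation: in a pre-Tannakian category, if $\Sym^2 X = 0$ (or $\lw^2 X = 0$) then $X$ is invertible \cite[Proposition~2.3.2]{CEN}. Consequently $X \otimes X$ is invertible, hence simple; but you already know $X^{\otimes 2} \cong X \oplus W^{\oplus 2}$ with $W \not\cong X$, a contradiction. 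This is exactly how the paper disposes of both the $\Sym^2 X = 0$ and $\lw^2 X = 0$ cases, and it does so at the outset, before any case split.

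With that gap filled, the remainder is essentially correct. Your route to fullness of $\Phi$ differs slightly from the paper's: the paper first shows every $\lw^n X$ is simple (by observing that the ring endomorphism $i^{-1} \circ \rK(\Phi)$ of $\rK(\cD)$ fixes $[L_1]$ and hence each $[L_n]$, since $[L_n]$ is a polynomial in $[L_1]$), and then invokes Proposition~\ref{prop:del-full}. You instead directly compare $\dim_k \Hom$ spaces via the semi-ring isomorphism, which is a legitimate shortcut. One nit: you only state the dimension comparison for endomorphism algebras (the case $n = m$), but the adjunction step of Proposition~\ref{prop:del-full} reduces $\Hom(L^{a,b}, L^{c,d})$ to $\Hom(L^{a+d,0}, L^{b+c,0})$ with the two exponents possibly different. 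Your semi-ring argument applies equally well to $\dim_k \Hom_{\cD}(L_{\bb}^{\otimes n}, L_{\bb}^{\otimes m}) = \dim_k \Hom_{\cC}(X^{\otimes n}, X^{\otimes m})$ for all $n,m$, so just state it in that generality.
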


Note that in this theorem we are not claiming any relation between $\Phi$ and $i$. In a subsequent result, we show that $\Phi$ can be chosen to induce $i$.

\begin{proof}
Let $[X]=i([L_{\bb}])$, i.e., $X$ is an object of $\cC$ whose class is $i([L_{\bb}])$. More generally, let $L_n=\lw^n{L_{\bb}}$, and put $[X_n]=i([L_n])$. The $L_n$'s are pairwise non-isomorphic simple objects of $\cD$ \cite[Proposition~8.5]{line}. Since $i$ is a semi-ring isomorphism, we see that the $X_n$'s are pairwise non-isomorphic simple objects of $\cC$. Here we are using the observation discussed above, that an object is simple if and only if its class is a minimal non-zero element.

In $\rK(\cD) \otimes \bQ$, we have the identity
\begin{displaymath}
[L_n] = \binom{[L_1]}{n}.
\end{displaymath}
In characteristic~0, this follows from the fact that the Adams operations are trivial on $\rK(\cD)$; indeed, this implies that $\rK(\cD)$ is a binomial ring and the $\lambda$-operations are given by binomial coefficients \cite[Proposition~8.3]{Elliott}. The positive characteristic case can be deduced from the characteristic~0 case, since this is just a computation in the ring $\rK(\cD)$, which is independent of the base field (by \cite[Theorem~7.2]{line}); here we are also using the explicit calculation of $\lw^n{L_{\bb}}$ \cite[Proposition~8.5]{line}.

The previous paragraph implies that we have similar identities for the $X_n$'s. In particular, letting $d=\dim(X)$, we find
\begin{displaymath}
\dim(X_n)=\binom{d}{n}
\end{displaymath}
whenever $n!$ is invertible in $k$; since the characteristic is not 2 or 3, this holds for $n \le 4$.

In $\rK(\cC)$, we have
\begin{displaymath}
X \otimes X = i(L_1 \otimes L_1) = i(L_1 \oplus L_2^{\oplus 2}) = X \oplus X_2^{\oplus 2}.
\end{displaymath}
If $\Sym^2{X}$ or $\lw^2{X}$ vanished then $X$ would be invertible \cite[Proposition~2.3.2]{CEN}, and so $X \otimes X$ would be invertible, but this is a contradiction since invertible objects are simple. Thus both $\Sym^2{X}$ and $\lw^2{X}$ are non-zero. We now exclude more possibilities for $\Sym^2{X}$. Note that the dimension of a simple object in $\cC$ is non-zero; in particular, $d$ is non-zero.
\begin{itemize}
\item If $\Sym^2(X)=X_2^{\oplus 2}$ then $\tfrac{1}{2} d(d+1)=d(d-1)$, and so $d=3$. But then $\dim(X_4)=0$, a contradiction.
\item If $\Sym^2(X)=X_2$ then $\tfrac{1}{2} d(d+1)=\tfrac{1}{2} d(d-1)$, a contradiction.
\item If $\Sym^2(X)=X$ then $\tfrac{1}{2} d(d+1)=d$, and so $d=1$. But then $\dim(X_2)=0$, a contradiction.
\end{itemize}
We conclude that $\Sym^2{X}=X \oplus X_2$ and $\lw^2{X}=X_2$. In particular, $X$ is fixed by $\psi^2$ and $\lw^2{X}$ is simple.

Since $L_{\bb}^{\otimes 3}$ contains $L_{\bb}$ with multiplicity one (e.g., by \cite[Theorem~7.2]{line}), it follows that $X^{\otimes 3}$ contains $X$ with multiplicity one. Since $X$ is fixed by $\psi^2$, the unique copy of $X$ in $X^{\otimes 3}$ is contained in $\Sym^3{X}$ by Proposition~\ref{prop:sym3}. In particular, $X$ is not a summand of $\bS_{(2,1)}(X)$.

Applying Theorem~\ref{thm:C}, we have a faithful tensor functor $\Phi \colon \cD \to \cC$ satisfying $\Phi(L_{\bb})=X$. Since $\Phi$ is a tensor functor, we have $\Phi(L_n)=\lw^n{X}$. Consider the composition
\begin{displaymath}
\xymatrix{
\rK(\cD) \ar[r]^{\Phi} & \rK(\cC) \ar[r]^{i^{-1}} & \rK(\cD) }
\end{displaymath}
This is a ring homomorphism that takes $[L_1]$ to $[L_1]$. It follows that $[L_n]$ is mapped to $[L_n]$, since $[L_n]$ is a polynomial in $[L_1]$. We thus see that the class of $\Phi(L_n)=\lw^n{X}$ maps to $[L_n]$ under $i^{-1}$, and is thus $[X_n]$. We have therefore shown $\lw^n{X}=X_n$; essentially, we have verified that $i$ is compatible with the $\lambda$-ring operations, in this special case. In particular, $\lw^n{X}$ is simple for all $n$, and so $\Phi$ is full (Proposition~\ref{prop:del-full}).

Now, $L_{\ww}$ is the unique simple object of $\cD$ such that $1 \le [L_{\ww}] \cdot [L_{\bb}]$ in $\rK_+(\cD)$. Similarly, $X^*$ is the unique simple object of $\cC$ such that $1 \le [X^*] \cdot [X]$ in $\rK_+(\cC)$. Since $i$ is a semi-ring isomorphism, we have $i([L_{\ww}])=[X^*]$. Let $M$ be a simple object of $\cC$, and let $[M']=i^{-1}([M])$, so that $M'$ is a simply object of $\cD$. Since $L_{\bb}$ generates $\cD$, we see that $M'$ is contained in $L_{\bb}^{\otimes n} \otimes L_{\ww}^{\otimes m}$ for some $n$ and $m$. This implies that $[M'] \le [L_{\bb}]^n \cdot [L_{\ww}]^m$ in $\rK_+(\cD)$. Applying $i$, we find $[M] \le [X]^n \cdot [X^*]^m$ in $\rK_+(\cC)$, and so $M$ appears in $X^{\otimes n} \otimes (X^*)^{\otimes m}$. This shows that $X$ generates $\cC$, and so $\Phi$ is an equivalence.
\end{proof}

We have just seen that if a semi-ring isomorphism $i \colon \rK_+(\cD) \to \rK_+(\cC)$ exists then an equivalence $\Phi \colon \cD \to \cC$ exists. We now show that there is actually a natural bijection between $i$'s and $\Phi$'s. To this end, we first introduce some notation. Let $\Eq^0(\cD, \cC)$ (resp.\ $\Eq^1(\cD, \cC)$) denote the set of all isomorphism classes of equivalences $\cD \to \cC$ (resp.\ $\cD^{\op} \to \cC$), and let $\Eq^*(\cD, \cC)$ be the disjoint union of the $\Eq^i(\cD, \cC)$. Let $\Isom(\rK_+(\cD), \rK_+(\cC))$ denote the set of isomorphism classes of semi-rings. The following is the result we aim to prove:

\begin{theorem} \label{thm:recog2}
The natural map
\begin{equation} \label{eq:eq}
\Eq^*(\cD, \cC) \to \Isom(\rK_+(\cD), \rK_+(\cC))
\end{equation}
is a bijection. Moreover, if $\cC \cong \cD$ then both sides have cardinality four.
\end{theorem}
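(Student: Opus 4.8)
\emph{Plan.} The strategy is to prove that the map is a bijection by treating injectivity and surjectivity separately, and then to obtain the cardinality statement by counting auto-equivalences of $\cD$. The natural map sends a covariant tensor equivalence $\Phi\colon\cD\to\cC$ to the semi-ring isomorphism $\Phi_{*}\colon[M]\mapsto[\Phi(M)]$, and a contravariant one $\Psi\colon\cD^{\op}\to\cC$ to $\Psi_{*}\colon[M]\mapsto[\Psi(M)]$; this is well defined on isomorphism classes since $\rK_{+}(\cD^{\op})=\rK_{+}(\cD)$ and tensor functors preserve direct sums, tensor products and duals. Two inputs will be used repeatedly: the uniqueness clause of Theorem~\ref{thm:C-1}, which says that a tensor functor $\cD\to\cC$ is determined up to isomorphism by the image of $L_{\bb}$; and the fact from \cite{line} that $\cD$ has ``enough'' simples, so that $\Pi$ and the duality functor $(-)^{*}$ induce \emph{distinct} automorphisms of $\rK_{+}(\cD)$ --- for instance $\Pi$ fixes $L_{\bb\ww}$ while $L_{\bb\ww}^{*}=L_{\ww\bb}$ is a different simple.

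\emph{Injectivity.} Suppose first $\Phi,\Phi'\in\Eq^{0}(\cD,\cC)$ induce the same isomorphism $i$. Then $[\Phi(L_{\bb})]=i([L_{\bb}])=[\Phi'(L_{\bb})]$; these classes are simple, so the objects are isomorphic, so $\Phi\cong\Phi'$ by Theorem~\ref{thm:C-1}. The same argument, after precomposing with the duality equivalence $D\colon\cD\to\cD^{\op}$, handles two elements of $\Eq^{1}(\cD,\cC)$. It remains to see that the images of $\Eq^{0}$ and $\Eq^{1}$ are disjoint: if $\Phi$ is covariant, $\Psi$ contravariant, and $\Phi_{*}=\Psi_{*}=i$, then $\Phi^{-1}\circ\Psi\circ D$ is a covariant self-equivalence of $\cD$ carrying $L_{\bb}$ to $\Psi(L_{\bb})^{*}\cong\Phi(L_{\bb})^{*}=\Phi(L_{\ww})$, hence isomorphic to $\Pi$ by Theorem~\ref{thm:C-1}; passing to induced maps gives $\Pi_{*}=D_{*}$ on $\rK_{+}(\cD)$, and $D_{*}$ is exactly the automorphism induced by $(-)^{*}$, contradicting the previous paragraph.

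\emph{Surjectivity.} Let $i\colon\rK_{+}(\cD)\to\rK_{+}(\cC)$ be given. The proof of Theorem~\ref{thm:recog} produces a covariant equivalence $\Phi\colon\cD\to\cC$ with $\Phi(L_{\bb})=X$, where $[X]=i([L_{\bb}])$, and moreover shows that $i$ and $\Phi_{*}$ agree on $[L_{\bb}]$, on $[L_{\ww}]$, and on all exterior powers of these. Thus $\sigma:=\Phi_{*}^{-1}\circ i$ is a semi-ring automorphism of $\rK_{+}(\cD)$ fixing $[L_{\bb}]$. I would then classify such $\sigma$. Since $\rK(\cD)$ is torsion-free with trivial Adams operations it is a binomial ring, so any ring automorphism preserves $x\mapsto\binom{x}{n}$; hence $\sigma$ fixes $[L_{\ww}]$ (it commutes with the intrinsic duality involution $[M]\mapsto[M^{*}]$) and fixes every $[\lw^{n}L_{\bb}]=\binom{[L_{\bb}]}{n}$ and $[\lw^{n}L_{\ww}]$. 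Expanding $[L_{\bb}]\cdot[L_{\ww}]$ via \cite[Theorem~7.2]{line} and using that $\sigma$ permutes simples, $\sigma$ either fixes or interchanges $[L_{\bb\ww}]$ and $[L_{\ww\bb}]$; since $\rK(\cD)$ is generated as a binomial ring by $[L_{\bb}],[L_{\ww}],[L_{\bb\ww}],[L_{\ww\bb}]$, this forces $\sigma=\id$ or $\sigma=(\Pi\circ(-)^{*})_{*}$. In the first case $i=\Phi_{*}$; in the second $i=(\Phi\circ\Pi\circ(-)^{*})_{*}$, which is induced by a contravariant equivalence. Either way $i$ lies in the image.

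\emph{Cardinality, and the main obstacle.} When $\cC\cong\cD$ the bijection reduces the cardinality claim to computing $|\Eq^{*}(\cD,\cD)|=|\Aut(\rK_{+}(\cD))|$. One has $\{\id,\Pi\}\subseteq\Eq^{0}(\cD,\cD)$, and by Theorem~\ref{thm:A} a covariant self-equivalence is determined by the image $X=\Phi(L_{\bb})$, which must generate $\cD$, be fixed by $\psi^{2}$, and have all exterior powers simple. A short computation in $\rK(\cD)$ (again via \cite[Theorem~7.2]{line}) shows the only such simples are $L_{\bb}$ and $L_{\ww}$: such an $X$ has $\dim X=-1$, so $X=L_{\mu}$ with $|\mu|$ odd, and for $|\mu|\ge 3$ one checks that $\lw^{2}L_{\mu}$ is not simple. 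Hence $\Eq^{0}(\cD,\cD)=\{\id,\Pi\}$; composing with $D$ identifies $\Eq^{1}(\cD,\cD)$ with $\Eq^{0}(\cD,\cD)$, so $|\Eq^{*}(\cD,\cD)|=4$. I expect the genuine work to be concentrated in the classification of $\sigma$ in the surjectivity step (equivalently, the statement $|\Aut(\rK_{+}(\cD))|=4$): one must extract from the bare ring $\rK(\cD)$ precisely enough structure to rigidify $i$ modulo the evident Klein four-group of symmetries generated by $\Pi_{*}$ and $((-)^{*})_{*}$ --- concretely, that $\rK(\cD)$ is generated as a binomial ring by $[L_{\bb}],[L_{\ww}],[L_{\bb\ww}],[L_{\ww\bb}]$, and that a ring automorphism fixing $[L_{\bb}]$ has no freedom beyond the interchange $[L_{\bb\ww}]\leftrightarrow[L_{\ww\bb}]$. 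Everything else is formal, from Theorems~\ref{thm:C-1} and~\ref{thm:A} together with known facts about $\cD$.
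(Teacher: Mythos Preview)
Your overall plan matches the paper's: reduce to $\cC=\cD$, prove injectivity and surjectivity separately, and read off the cardinality. Your injectivity argument via the uniqueness clause of Theorem~\ref{thm:C-1} is in fact a bit more direct than the paper's (which re-invokes the $1$-Delannic classification of Proposition~\ref{prop:ofrob-to-etale}), and your reduction in the surjectivity step to ``$\sigma\in\Aut(\rK_+(\cD))$ fixing $[L_\bb]$ has at most the $[L_{\bb\ww}]\leftrightarrow[L_{\ww\bb}]$ freedom'' is exactly where the paper puts the work as well.

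The gap is in how you propose to finish that step. You assert that $\rK(\cD)$ is generated as a binomial ring by $[L_\bb],[L_\ww],[L_{\bb\ww}],[L_{\ww\bb}]$, but this is neither proved nor obvious, and the paper does \emph{not} proceed this way. Instead, the paper shows directly (Lemma~\ref{lem:recog2-3}) that any semi-ring automorphism $\sigma$ fixing all length~$\le 2$ simple classes is the identity, by induction on word length. The induction step rests on a small combinatorial lemma (Lemma~\ref{lem:recog2-2}): a weight $\lambda$ of length $\ge 3$ is determined by the pair of multisets $S_\bb(\lambda),S_\ww(\lambda)$ of its one-letter deletions. The link to the ring is that, by the fusion rule, the multiplicity of $\nu$ in $S_\bb(\lambda)$ equals the coefficient of $a_\lambda$ in $a_\bb a_\nu$; since $\sigma$ fixes $a_\bb,a_\ww$ and (by induction) all shorter $a_\nu$, one gets $S_\bb(\sigma\text{-image})=S_\bb(\lambda)$ and likewise for $S_\ww$, forcing $\sigma(a_\lambda)=a_\lambda$. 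This is elementary and avoids any generation statement for $\rK(\cD)$.

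One smaller point: your separate cardinality argument (classifying simples $L_\mu$ with $\lw^2 L_\mu$ simple) is unnecessary. Once the bijection is established and the surjectivity proof has shown that every $i\in\Aut(\rK_+(\cD))$ is, after adjusting by at most two of the four explicit symmetries, the identity, you already have $|\Aut(\rK_+(\cD))|\le 4$; together with injectivity and the fact that $\Pi_*\ne D_*$ (your observation that $\Pi$ fixes $L_{\bb\ww}$ while duality does not), this gives exactly four on both sides. The paper does it this way.
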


If either the domain or target of \eqref{eq:eq} is non-empty then $\cC$ and $\cD$ are equivalent; here we are using Theorem~\ref{thm:recog}. It thus suffices to prove the result when $\cC=\cD$, and we assume this in what follows. In this case, \eqref{eq:eq} is a group homomorphism.

We begin by analyzing the group $\Eq^*(\cD, \cD)$. The set $\Eq^1(\cD, \cD)$ is non-empty, since it contains the duality functor. It follows that $\Eq^0(\cD, \cD)$ is an index two subgroup of $\Eq^*(\cD, \cD)$. Let $\Pi \colon \cD \to \cD$ be the auto-equivalence discussed after Theorem~\ref{thm:delannic}. As explained in \cite[Remark~4.17]{line}, we have $\Pi(L_{\bb})=L_{\ww}$, and, in general, $\Pi(L_{\lambda})=L_{\mu}$, where $\mu=\rev(\lambda^{\vee})$; here $\rev$ means we reverse the word, and $(-)^{\vee}$ means we switch $\bb$ and $\ww$. This equivalence squares to the identity. We thus have an order two subgroup of $\Eq^0(\cD, \cD)$.

\begin{lemma} \label{lem:recog2-1}
The map \eqref{eq:eq} is injective.
\end{lemma}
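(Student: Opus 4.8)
The plan is to show that the kernel of the group homomorphism \eqref{eq:eq} is trivial. So let $\Phi \in \Eq^*(\cD,\cD)$ induce the identity automorphism of $\rK_+(\cD)$; we must show $\Phi \cong \id_\cD$. Since $\Phi$ is an equivalence it sends simples to simples, so $[\Phi(L_\lambda)] = [L_\lambda]$ forces $\Phi(L_\lambda) \cong L_\lambda$ for every weight $\lambda$; in particular $\Phi(L_\bb) \cong L_\bb$ and $\Phi(L_\ww) \cong L_\ww$.

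The engine is the uniqueness clause of Theorem~\ref{thm:C-1}. First I would record that both $L_\bb$ and $L_\ww$ satisfy hypotheses (i)--(iii) of that theorem inside $\cD$: they are fixed by $\psi^2$ because every Adams operation on $\rK(\cD)$ is trivial; they and their second exterior powers are simple; and neither is a summand of its own $\bS_{(2,1)}$ --- for $L_\bb$ this is because $L_\bb^{\otimes 3}$ contains $L_\bb$ with multiplicity one \cite[Theorem~7.2]{line} and, $L_\bb$ being $\psi^2$-fixed, that copy lies in $\Sym^3 L_\bb$ by Proposition~\ref{prop:sym3}, exactly as in the proof of Theorem~\ref{thm:recog}; the case $L_\ww = L_\bb^*$ follows by applying the duality functor. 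Now if $\Phi$ is covariant, then $\Phi$ and $\id_\cD$ are two tensor functors $\cD \to \cD$ each sending $L_\bb$ to an object isomorphic to $L_\bb$, so Theorem~\ref{thm:C-1} gives $\Phi \cong \id_\cD$, as desired.

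It remains to rule out $\Phi \in \Eq^1(\cD,\cD)$, i.e.\ $\Phi$ contravariant. Let $D \colon \cD \to \cD$ be the duality equivalence $Y \mapsto Y^*$; it is a contravariant tensor equivalence, its image under \eqref{eq:eq} is the automorphism $[L_\lambda] \mapsto [L_{\lambda^\vee}]$, and $D(L_\bb) = L_\ww$. Then $D \circ \Phi$ is a \emph{covariant} tensor equivalence of $\cD$ with $(D \circ \Phi)(L_\bb) \cong D(L_\bb) = L_\ww$. On the other hand $\Pi$ is a covariant tensor equivalence with $\Pi(L_\bb) = L_\ww$; since $L_\ww$ satisfies the hypotheses of Theorem~\ref{thm:C-1}, its uniqueness clause yields $D \circ \Phi \cong \Pi$. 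Comparing the semi-ring automorphisms associated to the two sides: $D \circ \Phi$ corresponds to $[L_\lambda] \mapsto [L_{\lambda^\vee}]$ (as $\Phi$ induces the identity and $D$ induces $(-)^\vee$), while $\Pi$ corresponds to $[L_\lambda] \mapsto [L_{\rev(\lambda^\vee)}]$. Hence $\lambda^\vee = \rev(\lambda^\vee)$ for every weight $\lambda$, i.e.\ every weight is a palindrome --- false, e.g.\ $\bb\ww$. This contradiction shows no contravariant $\Phi$ lies in the kernel, completing the proof.

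The only place any care is needed is the contravariant case, and there the work is purely formal: checking that $D \circ \Phi$ is covariant and that it, like $\Pi$, lies within the scope of Theorem~\ref{thm:C-1}, and then tracking the induced maps on $\rK_+(\cD)$ through the combinatorics of $\rev$ and $(-)^\vee$ on weights. No genuine obstacle arises beyond correctly invoking the uniqueness half of Theorem~\ref{thm:C-1}.
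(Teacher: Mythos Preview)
Your argument is correct and close in spirit to the paper's, but it routes the key uniqueness step through Theorem~\ref{thm:C-1} rather than directly through the classification of 1-Delannic structures. The paper argues with $\sC(\bR)$: in the covariant case $\Phi(\sC(\bR))$ is a 1-Delannic algebra on the underlying object $\sC(\bR)$, so by Proposition~\ref{prop:ofrob-to-etale} and Theorem~\ref{thm:delannic} $\Phi$ is either $\id$ or $\Pi$, and $\Pi$ is excluded since it acts nontrivially on $\rK_+(\cD)$; the contravariant case is handled by composing with duality and observing that neither $\id$ nor $\Pi$ induces the duality involution on $\rK_+(\cD)$. You instead work with $L_\bb$ and invoke the uniqueness clause of Theorem~\ref{thm:C-1} (with $X=L_\bb$ in the covariant case and $X=L_\ww$ in the contravariant case), which is a clean repackaging since Theorem~\ref{thm:C-1} was itself proved via Proposition~\ref{prop:ofrob-to-etale} and Theorem~\ref{thm:delannic}. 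Your contravariant endgame---pinning $D\circ\Phi$ to $\Pi$ and then reading off the contradiction $\lambda^\vee=\rev(\lambda^\vee)$ on weights---is a pleasant variant of the paper's ``neither $\id$ nor $\Pi$ induces duality on $\rK_+$''. One small point of care: make sure the composition you form with duality really lands as a covariant tensor functor $\cD\to\cD$ (the paper simply says ``the composition of $\Phi$ with duality'' without fixing an order); with $\Phi\in\Eq^1(\cD,\cD)$ viewed as $\cD^{\op}\to\cD$ and $D\colon\cD\to\cD^{\op}$, the covariant composite is $\Phi\circ D$ rather than $D\circ\Phi$, though this does not affect the substance of your argument.
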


\begin{proof}
Suppose $\Phi$ belongs to the kernel, that is, $\Phi$ is a (possibly contravariant) auto-equivalence of $\cD$ as a tensor category that induces the identity on $\rK_+(\cD)$. First suppose that $\Phi$ is covariant. Then $\Phi(\sC(\bR))$ is a 1-Delannic algebra in $\cD$ whose underlying object is isomorphic to $\sC(\bR)$. From Proposition~\ref{prop:ofrob-to-etale} we see that $\Phi(\sC(\bR))$ is isomorphic, as an ordered \'etale algebra, to $\sC(\bR)$ with its standard order or the reverse order. Under the bijection of Theorem~\ref{thm:delannic}, these two Delannic algebras correspond to the identity functor and $\Pi$. Thus, by that theorem, $\Phi$ is isomorphic to either the identity or $\Pi$. But it cannot be $\Pi$ since $\Pi$ is not in the kernel of \eqref{eq:eq}.

Now suppose $\Phi$ is contravariant. Let $\Phi'$ be the composition of $\Phi$ with duality. Then $\Phi'$ is a covariant auto-equivalence of $\cD$ and induces the duality map on $\rK_+(\cD)$. Since $\sC(\bR)$ is self-dual, we again see that $\Phi'(\sC(\bR))$ is a 1-Delannic algebra with underlying object $\sC(\bR)$. As in the previous paragraph, $\Phi'$ is either the identity or $\Pi$. However, neither of these functors induce the duality map on $\rK_+(\cD)$, and so this case cannot happen.
\end{proof}

Given a weight $\lambda \in \Lambda$, let $S_{\bb}(\lambda)$ denote the multiset of weights obtained by deleting one $\bb$ from $\lambda$ in each possible way. Analogously define $S_{\ww}(\lambda)$. For example,
\begin{displaymath}
S_{\bb}(\bb\bb\ww\bb) = \{ \bb\ww\bb, \bb\ww\bb, \bb\bb\ww \}.
\end{displaymath}

\begin{lemma} \label{lem:recog2-2}
Let $\lambda$ and $\mu$ be weights of lengths $\ge 3$. If $S_{\bb}(\lambda)=S_{\bb}(\mu)$ and $S_{\ww}(\lambda)=S_{\ww}(\mu)$ then $\lambda=\mu$.
\end{lemma}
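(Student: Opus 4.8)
The plan is to reduce to a purely combinatorial reconstruction statement and prove it by strong induction on the length. First observe that $|S_{\bb}(\nu)|$ is the number of $\bb$'s in a weight $\nu$ and $|S_{\ww}(\nu)|$ the number of $\ww$'s, so the hypotheses force $\lambda$ and $\mu$ to have the same length, say $n\ge 3$. Writing $D(\nu):=S_{\bb}(\nu)\uplus S_{\ww}(\nu)$ for the multiset of \emph{all} single-letter deletions of $\nu$, the hypotheses give $D(\lambda)=D(\mu)$, so it suffices to prove: for $n\ge 3$, two weights of length $n$ with the same deck $D$ are equal. (This genuinely fails for $n=2$, since $D(\bb\ww)=\{\bb,\ww\}=D(\ww\bb)$, which is exactly why the hypothesis $\ge 3$ is needed.) The constant case is immediate: if $\lambda=\bb^{n}$ then $D(\lambda)$ is $n$ copies of $\bb^{n-1}$, and any weight possessing a $\ww$ has a deletion retaining that $\ww$, so $\mu=\bb^{n}$; similarly for $\ww^{n}$. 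So assume $\lambda$ (hence $\mu$) is non-constant.

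The key point is that the first letter and the initial run of $\lambda$ are determined by $D(\lambda)$ once $n\ge 3$. Among the $n$ single-letter deletions of $\lambda=\lambda_1\lambda_2\cdots\lambda_n$, the $n-1$ that delete a position $\ge 2$ all begin with $\lambda_1$; hence at least $n-1\ge 2$ elements of $D(\lambda)$ begin with $\lambda_1$, while (by the symmetric count, applied in the case $\lambda_1=\ww$) at most one element begins with the other letter. So $\lambda_1$ is the letter that begins at least two elements of $D(\lambda)$; in particular $\lambda_1=\mu_1$, and after applying the $\bb\leftrightarrow\ww$ swap if necessary I may assume both begin with $\bb$. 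Next let $a$ be the length of the initial $\bb$-run of $\lambda$; since $\lambda$ is non-constant, $\lambda=\bb^{a}\ww\cdots$. Deleting a letter inside that run produces a deletion whose initial $\bb$-run has length $a-1$, whereas every deletion has initial $\bb$-run of length $\ge a-1$; thus $a=1+\min\{\,\text{length of the initial }\bb\text{-run of }w : w\in D(\lambda)\,\}$ is recovered from $D(\lambda)$, and $a(\lambda)=a(\mu)=:a$.

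Now write $\lambda=\bb^{a}\lambda''$, where $\lambda''$ begins with $\ww$ and has length $n-a\ge 1$. The deck decomposes as $D(\lambda)=\{\,\bb^{a-1}\lambda''\text{ with multiplicity }a\,\}\uplus\{\,\bb^{a}w : w\in D(\lambda'')\,\}$, and the two parts are separated inside $D(\lambda)$ by the initial-$\bb$-run statistic: the first part has initial run exactly $a-1$, the second part at least $a$. Hence $D(\lambda'')$ is recovered from $D(\lambda)$ (and $a$) by taking the elements whose initial $\bb$-run has length $\ge a$ and deleting their first $a$ letters; consequently $D(\lambda'')=D(\mu'')$, with $\lambda'',\mu''$ weights of length $n-a<n$ that begin with $\ww$. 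If $n-a\ge 3$, the inductive hypothesis gives $\lambda''=\mu''$. If $n-a\le 2$, the same conclusion is immediate: the length-$\le 1$ cases are trivial, and for length $2$ the only weights beginning with $\ww$ are $\ww\bb$ and $\ww\ww$, whose decks $\{\bb,\ww\}$ and $\{\ww,\ww\}$ differ. Either way $\lambda=\bb^{a}\lambda''=\bb^{a}\mu''=\mu$. Note the base case $n=3$ needs no separate treatment: there $a\ge 1$ forces $n-a\le 2$, so it falls under the direct subcase and no recursive call is made.

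I expect the fiddly part to be the bookkeeping in the last paragraph: checking precisely that stripping the common prefix $\bb^{a}$ is a well-defined and invertible operation on the relevant sub-multiset of $D(\lambda)$, and that the small-length endpoints of the recursion genuinely require the auxiliary fact that $\lambda''$ begins with $\ww$ (a bare length-$1$ or length-$2$ deck does not determine the weight, so these cannot be folded into the induction). The conceptual heart, however, is the counting observation in the second paragraph that forces $n\ge 3$; the rest is routine combinatorial unwinding.
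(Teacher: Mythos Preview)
Your proof is correct. Both arguments hinge on the same observation---that the initial $\bb$-run statistic on the deleted words picks out the contribution of the leading block---but the paper exploits it more directly and avoids induction altogether. After establishing (as you do) that $\mu$ also begins with $\bb$, the paper simply writes $\lambda=\bb\lambda'$ and notes that $\lambda'$ itself is an element of $S_{\bb}(\lambda)$, namely the unique one (ignoring multiplicity) with the fewest initial $\bb$'s; the same characterization in $S_{\bb}(\mu)$ gives $\mu'$, and $\lambda'=\mu'$ in one stroke. You instead strip the whole initial run $\bb^{a}$, extract the residual deck $D(\lambda'')$, and recurse. Your route proves the mildly stronger fact that the combined deck $D=S_{\bb}\uplus S_{\ww}$ already determines the weight for $n\ge 3$ (the paper uses the separate multisets), and your majority-vote argument for the first letter is cleaner than the paper's case analysis; the price is the extra bookkeeping of the induction and the small-length endgame, which the paper's one-shot recovery of $\lambda'$ sidesteps entirely.
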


\begin{proof}
Let $\ell$ be the length of $\lambda$. Every weight in $S_{\bb}(\lambda)$ has length $\ell-1$. Thus the same is true for $S_{\bb}(\mu)$, and so $\mu$ also has length $\ell$. Without loss of generality, suppose that $\lambda$ begins with $\bb$. If $\lambda$ begins with $\bb\bb$ then every word in $S_{\bb}(\lambda)$ begins with $\bb$; if $\lambda$ begins with $\bb\ww$ then there is one word in $S_{\bb}(\lambda)$ that begins with $\ww$, and every other one begins with $\bb$. Since $S_{\bb}(\mu)=S_{\bb}(\lambda)$, the same is true for $S_{\bb}(\mu)$. From this, we conclude that $\mu$ also begins with $\bb$. Note that we have used the assumption $\ell \ge 3$ here.

Observe that $\lambda$ consists of only $\bb$ characters if and only if the same is true for every weight in $S_{\bb}(\lambda)$. Thus $\lambda$ is all $\bb$ if and only if $\mu$ is, and in this case $\lambda=\mu$. We assume in what follows that $\lambda$ and $\mu$ both contain some $\ww$ character.

Let $r+1$ be the index of the first $\ww$ character in $\lambda$, so that the first $r$ characters of $\lambda$ are $\bb$. Write $\lambda=\bb \lambda'$; note that the first index of $\ww$ in $\lambda'$ is $r$. If we remove one of the first $r$ characters from $\lambda$, we obtain $\lambda'$; if we remove one of the other characters from $\lambda$, we obtain a word that begins with $r$ copies of $\bb$. We thus see that $S_{\bb}(\lambda)$ contains $\lambda'$ with multiplicity $r$, and all remaining words begin with $r$ copies of $\bb$. We can thus recover $\lambda'$ as the unique weight (ignoring multiplicity) in $S_{\bb}(\lambda)$ which has the fewest initial copies of $\bb$. Writing $\mu=\bb \mu'$, we find a similar description of $\mu'$. From the equality $S_{\bb}(\lambda)=S_{\bb}(\mu)$, we conclude that $\lambda'=\mu'$, and so $\lambda=\mu$, as required.
\end{proof}

Write $a_{\lambda}$ for the class of $L_{\lambda}$ in $\rK_+(\cD)$. We call these the \defn{simple classes}. They are characterized intrinsically as the minimal non-zero elements of $\rK_+(\cD)$ with respect to its natural partial order. The multiplicity of $\mu$ in $S_{\bb}(\lambda)$ is exactly the coefficient of $\lambda$ in $a_{\bb} a_{\mu}$; this follows from the fusion rule for $\cD$ \cite[Theorem~7.2]{line}

\begin{lemma} \label{lem:recog2-3}
Let $i \colon \rK_+(\cD) \to \rK_+(\cD)$ be a semi-ring automorphism that fixes each $a_{\lambda}$ with $\ell(\lambda) \le 2$. Then $i$ is the identity.
\end{lemma}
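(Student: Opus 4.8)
The plan is to prove, by induction on $\ell \ge 2$, that $i$ fixes $a_\lambda$ for every weight $\lambda$ with $\ell(\lambda) \le \ell$; the case $\ell \le 2$ is the hypothesis. As $i$ is a bijective semi-ring homomorphism it is an automorphism of the partial order on $\rK_+(\cD)$, hence permutes its minimal nonzero elements, i.e.\ the simple classes; write $i(a_\lambda)=a_{\sigma(\lambda)}$ for the resulting bijection $\sigma$ of the set of weights. The inductive hypothesis says $\sigma$ fixes every weight of length $<\ell$, and we may take $\ell\ge 3$. All computations below take place in the group $\rK(\cD)$, so that differences are allowed.

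The one external input is the fusion rule \cite[Theorem~7.2]{line}, used in the following form: for a weight $\nu$ of length $\ell-1$, the product $a_\bb a_\nu$ has no constituent of length exceeding $\ell$ (tensoring with $L_\bb$, a constituent of $\sC(\bR)$, raises length by at most one), and for $\ell(\mu)=\ell$ the coefficient of $a_\mu$ in $a_\bb a_\nu$ equals the multiplicity of $\nu$ in $S_\bb(\mu)$; the same holds with $\ww$ in place of $\bb$. Writing $m_\nu(\mu)$ for the multiplicity of $\nu$ in $S_\bb(\mu)$, we thus have $a_\bb a_\nu = \sum_{\ell(\mu)=\ell} m_\nu(\mu)\,a_\mu + r_\nu$, where $r_\nu$ is an $\mathbf{N}$-combination of simple classes of length $<\ell$. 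Applying $i$, and using that $i$ fixes $a_\bb$, fixes $a_\nu$, and fixes $r_\nu$ (all by the inductive hypothesis), we obtain
\[
\sum_{\ell(\mu)=\ell} m_\nu(\mu)\, a_{\sigma(\mu)} \;=\; \sum_{\ell(\mu)=\ell} m_\nu(\mu)\, a_\mu
\]
for every weight $\nu$ of length $\ell-1$, and likewise for $S_\ww$.

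Now fix $\lambda$ with $\ell(\lambda)=\ell$. Choosing $\nu$ to be $\lambda$ with one $\bb$ deleted if $\lambda$ contains a $\bb$, or with one $\ww$ deleted (and the $\ww$-version of the display) otherwise, the coefficient of $a_{\sigma(\lambda)}$ on the left-hand side is $m_\nu(\lambda)>0$; since $\sigma$ is injective and the right-hand side is supported on simple classes of length $\ell$, this forces $\ell(\sigma(\lambda))=\ell$. Hence $\sigma$ restricts to a bijection of the finite set of length-$\ell$ weights. Comparing, for each $\nu$ of length $\ell-1$, the coefficient of $a_\lambda$ on the two sides of the display (and its $\ww$-analogue) now yields $S_\bb(\sigma^{-1}(\lambda))=S_\bb(\lambda)$ and $S_\ww(\sigma^{-1}(\lambda))=S_\ww(\lambda)$ as multisets. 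Both $\sigma^{-1}(\lambda)$ and $\lambda$ have length $\ell\ge 3$, so Lemma~\ref{lem:recog2-2} gives $\sigma^{-1}(\lambda)=\lambda$, i.e.\ $i(a_\lambda)=a_\lambda$. This completes the induction and the proof. The genuine content is concentrated in Lemma~\ref{lem:recog2-2}, already established; the only other step needing care is length-preservation, which goes through because a nonempty weight always admits a letter to delete, and everything else is bookkeeping with the fusion rule.
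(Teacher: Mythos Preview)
Your proof is correct and follows essentially the same approach as the paper's: induct on the length $\ell$, use that $i$ permutes simple classes, apply the fusion rule to the products $a_{\bb}a_{\nu}$ and $a_{\ww}a_{\nu}$ with $\ell(\nu)=\ell-1$ to first pin down $\ell(\sigma(\lambda))=\ell$ and then deduce $S_{\bb}(\sigma^{-1}(\lambda))=S_{\bb}(\lambda)$ and $S_{\ww}(\sigma^{-1}(\lambda))=S_{\ww}(\lambda)$, and finish with Lemma~\ref{lem:recog2-2}. Your treatment of the length-preservation step (splitting into the cases where $\lambda$ contains a $\bb$ or not) is slightly more explicit than the paper's, but the underlying argument is the same.
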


\begin{proof}
We show by induction on $\ell$ that $i$ fixes all $a_{\lambda}$ with $\lambda$ of length $\ell$. We are given this for $\ell \le 2$. Suppose now that $\ell \ge 3$, and we know the statement for all smaller $\ell$. Let $\lambda$ be a weight of length $\ell$. Since $a_{\lambda}$ a simple class, so is $i(a_{\lambda})$, and it thus has the form $a_{\mu}$. Since $i$ fixes all weights of smaller length, $\mu$ must have length at least $\ell$.

Let $\nu$ have length $\ell-1$. The multiplicity of $\nu$ in $S_{\bb}(\lambda)$ is the coefficient of $a_{\lambda}$ in $a_{\bb} a_{\nu}$. Similarly, the multiplicity of $\nu$ in $S_{\bb}(\mu)$ is the coefficient of $a_{\mu}=i(a_{\lambda})$ in $a_{\bb} a_{\nu}=i(a_{\bb} a_{\nu})$. Since $i$ is an isomorphism, these quantities are equal. Since $S_{\bb}(\lambda)$ contains some $\nu$ of length $\ell-1$, we see that the same is true for $S_{\bb}(\mu)$, and so $\mu$ has length $\ell$. It now follows that $S_{\bb}(\lambda)=S_{\bb}(\mu)$, and similarly for $S_{\ww}$. Thus $\lambda=\mu$ by Lemma~\ref{lem:recog2-2}, and the proof is complete.
\end{proof}

We are now ready to finish the proof of the theorem.

\begin{proof}[Proof of Theorem~\ref{thm:recog2}]
We show that \eqref{eq:eq} is surjective. Thus let $i$ be a semi-ring automorphism of $\rK_+(\cD)$. Since $a_{\bb}^2=a_{\bb}+2a_{\bb\bb}$, we see that $i(a_{\bb})$ is a simple class whose square is a sum of three simple classes. From the fusion rules for $\cD$ \cite[Theorem~7.2]{line}, it follows that $i(a_{\bb})$ is either $a_{\bb}$ or $a_{\ww}$. In the latter case, we can modify $i$ by an element of the image of \eqref{eq:eq} (namely, the image of the duality functor) to put us in the first case. We thus assume that $i(a_{\bb})=a_{\bb}$. Since $i(a_{\ww})$ must also be a simple class of length~1, we have $i(a_{\ww})=a_{\ww}$.

We have $a_{\bb}^2=a_{\bb}+2a_{\bb\bb}$. In particular, $a_{\bb\bb} \le a_{\bb}^2$. Applying $i$, we see that $i(a_{\bb\bb})$ is a simple class that is $\le a_{\bb}^2$. Since it cannot be $a_{\bb}$, it must be $a_{\bb\bb}$. Thus $a_{\bb\bb}$ is fixed by $i$. Similarly, so is $a_{\ww\ww}$. We have
\begin{displaymath}
a_{\bb} a_{\ww} = a_{\bb\ww}+a_{\ww\bb}+a_{\bb}+a_{\ww}+1.
\end{displaymath}
Thus $a_{\bb\ww} \le a_{\bb} a_{\ww}$, and so $i(a_{\bb\ww}) \le a_{\bb}a_{\ww}$. It follows that $i(a_{\bb\ww})$ is either $a_{\bb\ww}$ or $a_{\ww\bb}$. Once again, in the latter case we can modify $i$ by an element of the image of \eqref{eq:eq} (namely, duality composed with $\Pi$) to place ourselves in the first case (while maintaining the behavior on words of length one). We thus assume $i(a_{\bb\ww})=a_{\bb\ww}$. It now follows that $i(a_{\ww\bb})=a_{\ww\bb}$. Since $i$ fixes all simple classes of length $\le 2$, it is the identity by Lemma~\ref{lem:recog2-3}. Thus $i$ belongs to the image of \eqref{eq:eq}.

We have thus shown that \eqref{eq:eq} is surjective. Since it is also injective (Lemma~\ref{lem:recog2-1}), it is an isomorphism. The above proof shows that every automorphism of $\rK_+(\cD)$ is the image of one of the four elements of $\Eq^*(\cD, \cD)$ that we have constructed. It thus follows that the source and target of \eqref{eq:eq} have order four.
\end{proof}

\section{Some comments on Case~IIb} \label{s:caseIIb}

Suppose $X$ is an object in a semi-simple pre-Tannakian category $\cC$ such that $X$ is fixed by $\psi^2$, and $X$ and $\lw^2{X}$ are simple. Then $X$ belongs to one of the cases from \S \ref{ss:tri}. In Case~I, $X$ comes from the Delannoy category, and in Case~III it comes from $\Rep(\HH)$; furthermore, Case~IIa has been excluded. It remains to understand Case~IIb. We now make some comments on it, though we do not resolve it. We assume throughout \S \ref{s:caseIIb} that $X$ belongs to Case~IIb.

\subsection{An attempt at an o-Frobenius structure}

We first attempt to define an o-Frobenius structure on $X$, following \S \ref{ss:key}. There are non-zero maps
\begin{displaymath}
\mu \colon X \otimes X \to X, \qquad \delta \colon X \to X \otimes X,
\end{displaymath}
which are unique up to scaling. We fix such maps satisfying $\mu\delta=\id_X$. These maps are necessarily (co-)commutative. We thus have axioms (a) and (c) for an o-Frobenius algebra. Previously, (co-)associativity, which is axiom~(b), came from the assumption that $X$ does not appear in $\bS_{(2,1)}$. We can now no longer reason in this manner, and, in fact, we will not obtain a proof of (co-)associativity. We set this issue aside for the moment and examine the remaining axiom. As usual, define
\begin{displaymath}
\gamma = (\id \otimes \mu)(\delta \otimes \id), \qquad
\gamma' = (\mu \otimes \id)(\id \otimes \delta).
\end{displaymath}

\begin{proposition} \label{prop:IIb-axiom-d}
We have $\gamma+\gamma'=\id_{X \otimes X} + \delta \mu$, i.e., axiom (d) holds.
\end{proposition}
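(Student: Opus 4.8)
The plan is to run the argument of Lemma~\ref{lem:key-4} essentially verbatim, checking that it survives the loss of axiom~(b). First I would collect the structural facts available in Case~IIb from Proposition~\ref{prop:case2}: $\bone, X, X^*, \lw^2 X, \lw^2 X^*$ are pairwise non-isomorphic simples, $\dim X = -1$, $\dim \lw^2 X = 1$, and $X^{\otimes 2} = (\lw^2 X)^{\oplus 2} \oplus X$, $\Sym^2 X = \lw^2 X \oplus X$. Thus $R := \End(X^{\otimes 2})$ is five-dimensional; with $e = \tfrac12(1+\tau)$ and $f = \tfrac12(1-\tau)$ we have $fRf = \End(\lw^2 X) = kf$ and $eRe = \End(\Sym^2 X) = k\,\delta\mu \oplus k\,(e-\delta\mu)$, where $\delta\mu$ is the central idempotent of $R$ projecting onto the multiplicity-one summand $X \subseteq X^{\otimes 2}$. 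I would also note that the trace computations of Proposition~\ref{prop:trace-gamma} still apply: by Remark~\ref{rem:AssocNotNeeded} they use only axioms (a) and (c), so $\tr(\gamma) = \beta\alpha = 0$ (as $X$ is a non-trivial simple) and $\tr(\gamma\tau) = \tr(\tau\gamma) = \dim X = -1$.

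Next comes the formal reduction. Using $\tau e = e$, $\tau f = -f$ and $\gamma' = \tau\gamma\tau$ one gets $\gamma + \gamma' = 2e\gamma e + 2f\gamma f$ with no further input. Comparing traces exactly as in Lemma~\ref{lem:key-4} gives $f\gamma f = \tfrac12 f$. Writing $e\gamma e = p\,\delta\mu + q\,(e-\delta\mu)$, the trace identity $\tr(e\gamma e) = -p+q = -\tfrac12$ shows that everything reduces to proving $p=1$: then $q=\tfrac12$, and adding $2e\gamma e = e + \delta\mu$ to $2f\gamma f = f$ yields $\gamma + \gamma' = e + f + \delta\mu = \id_{X\otimes X} + \delta\mu$. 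Finally, $p$ is exactly the scalar with $\delta\mu\,\gamma\,\delta\mu = p\,\delta\mu$, that is, the scalar with $\mu\gamma\delta = p\cdot\id_X$; and since $\Hom(X, X^{\otimes 2}) = k\delta$ and $\mu\delta = \id_X$, this is the same as $\gamma\delta = p\,\delta$. So the whole proposition comes down to the single identity $\mu\gamma\delta = \id_X$ (equivalently $\gamma\delta = \delta$).

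I expect this last identity to be the main obstacle, because in Cases~I and~IIa it is a consequence of coassociativity, which we no longer have. The cleanest reformulation I see is: since $(\mu\otimes\id)(\delta\otimes\id) = (\mu\delta)\otimes\id = \id_{X^{\otimes 2}}$ (using only $\mu\delta=\id$), we have $\mu(\mu\otimes\id)(\delta\otimes\id)\delta = \mu\delta = \id_X$, whence
\[
\id_X - \mu\gamma\delta \;=\; \bigl(\mu(\mu\otimes\id) - \mu(\id\otimes\mu)\bigr)\circ(\delta\otimes\id)\delta \;=\; \phi \circ g,
\]
where $\phi$ is the associativity defect of $\mu$ and $g = (\delta\otimes\id)\delta$. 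Using commutativity of $\mu$ (in the form $\mu(\id\otimes\mu) = \mu(\mu\otimes\id)\circ\sigma$ for the cyclic braiding $\sigma$, which acts trivially on $\Sym^3 X$), one checks that $\phi$ vanishes on $\Sym^3 X$ and on $\lw^3 X$, hence factors through the quotient $\bS_{(2,1)}(X)^{\oplus 2}$; likewise $g$ is $\tau_{12}$-invariant, so lands in $\Sym^3 X \oplus \bS_{(2,1)}(X)$. The task is therefore to show that the $\bS_{(2,1)}(X)$-component of $g$ lies in $\ker\phi$. I would attack this by examining $g$ inside $\Sym^2 X \otimes X \cong \Sym^3 X \oplus \bS_{(2,1)}(X)$ and exploiting the Case~IIb hypotheses — that $X$ occurs in $\bS_{(2,1)}(X)$ with multiplicity one and not at all in $\lw^3 X$, together with $\lw^2 X \otimes X = \lw^3 X \oplus \bS_{(2,1)}(X)$ — to match multiplicities and force the relevant map to vanish. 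A fallback would be to look for a second independent trace identity pinning down $\mu\gamma\delta$; but moment computations such as $\tr(\gamma^2\tau)$ unfold back to $-(\mu\gamma\delta)$ as a scalar, so a genuinely new ingredient appears to be needed at this point.
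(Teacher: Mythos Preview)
Your setup and reduction are correct: the identity $\gamma+\gamma'=2e\gamma e+2f\gamma f$ holds, the computation $f\gamma f=\tfrac12 f$ goes through exactly as in Lemma~\ref{lem:key-4}, and writing $e\gamma e=p\,\delta\mu+q\,(e-\delta\mu)$ with $-p+q=-\tfrac12$, the whole proposition does reduce to showing $p=1$ (equivalently $\mu\gamma\delta=\id_X$). The gap is in the last step. Your direct attack via the associativity defect $\phi$ does not close: in Case~IIb the simple $X$ appears in $\bS_{(2,1)}(X)$ with multiplicity exactly one, so both the $\bS_{(2,1)}$-component of $g=(\delta\otimes\id)\delta$ and the restriction $\phi|_{\bS_{(2,1)}}$ live in one-dimensional $\Hom$ spaces, and nothing in your multiplicity bookkeeping forces either to vanish or their composite to be zero. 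You essentially acknowledge this at the end.

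The missing idea is precisely the fallback you dismissed. The paper's proof squares the relation $\gamma+\gamma'=f+c_1(e-\delta\mu)+c_2\,\delta\mu$ (here $c_1=2q$, $c_2=2p$) and takes traces, obtaining the second equation $-2=1+c_1^2-c_2^2$, which together with $c_1-c_2=-1$ gives $c_1=1$, $c_2=2$. For this one needs $\tr(\gamma^2)=0$ and $\tr((\gamma\tau)^2)=\tr(\gamma\gamma')=\dim X=-1$, and these \emph{are} computable absolutely rather than circularly. For $\tr(\gamma^2)$, the closed diagram has exactly two strands passing from the $\delta$-side to the $\mu$-side, so it factors as $\bone\to X^{\otimes 2}\to\bone$; since $\Hom(\bone,X^{\otimes 2})=0$ in Case~IIb, this vanishes. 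For $\tr(\gamma\gamma')$, the diagram contains a sub-loop computing $\ptr_1(\delta\mu)\colon X\to X$; since $X$ is simple this is a scalar, and comparing traces shows the scalar is $1$, after which the remaining diagram is $\tr(\mu\delta)=\dim X$. So the second-moment traces you set aside are exactly the ``genuinely new ingredient'' needed; the specific moment $\tr(\gamma^2\tau)$ you tried may indeed unwind to the unknown, but $\tr(\gamma^2)$ and $\tr((\gamma\tau)^2)$ do not.
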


\begin{proof}
The proof is a modification of the argument used for Lemma~\ref{lem:key-4}. We have
\begin{displaymath}
\tr(\gamma)=\tr(\gamma^2)=0, \qquad
\tr(\gamma \tau)=\tr((\gamma \tau)^2)=\dim{X}.
\end{displaymath}
We have already explained how to compute the traces of $\gamma$ and $\gamma \tau$ in Proposition~\ref{prop:trace-gamma}, and their squares are handled in a similar manner. Since we are in Case~IIb, we necessarily have $\dim{X}=-1$ (Proposition~\ref{prop:case2}(b)).

Let $R=\End(X^{\otimes 2})$, and let $e=\tfrac{1}{2}(1+\tau)$ and $f=\tfrac{1}{2}(1-\tau)$. As in Lemma~\ref{lem:key-4}, the ring $fRf$ is one dimensional and spanned by $f$, and so we have $f\gamma f=cf$ for some $c$. Computing traces, as in that lemma, we find $c=\tfrac{1}{2}$. The ring $eRe$ is two-dimensional, and spanned by $\delta \mu$ and $e-\delta \mu$. We thus have an expression
\begin{displaymath}
e\gamma e = \tfrac{1}{2} c_1 (e-\delta \mu) + \tfrac{1}{2} c_2 \delta \mu.
\end{displaymath}
Computing traces of the above equation, as in Lemma~\ref{lem:key-4}, gives $-1=c_1-c_2$. Note that in the proof of that lemma, we knew $\gamma \delta \mu=\delta \mu$, and this allowed us to eliminate one degree of freedom; we cannot do that here.

Adding our equations for $e\gamma e$ and $f\gamma f$, we find
\begin{displaymath}
\gamma+\gamma' = f + c_1(e-\delta \mu) + c_2 \delta \mu.
\end{displaymath}
Now square both sides. Since $f$, $e-\delta \mu$, and $\delta \mu$ are orthogonal idempotents, we find
\begin{displaymath}
\gamma^2+\gamma \gamma' + \gamma' \gamma + (\gamma')^2 = f + c_1^2 (e-\delta \mu) + c_2^2 \delta \mu.
\end{displaymath}
Now take traces. Note that $\gamma$ and $\gamma'$ are conjugate, and $\gamma \gamma' = (\gamma \tau)^2$. Thus, using the formulas from the first paragraph, we find
\begin{displaymath}
-2 = 1 + c_1^2 - c_2^2.
\end{displaymath}
Combined with our other equation, this gives $c_1=1$ and $c_2=2$, which proves the proposition.
\end{proof}

\begin{corollary} \label{cor:IIb-axiom-d}
We have $\mu \gamma = \mu$ and $\gamma \delta = \delta$.
\end{corollary}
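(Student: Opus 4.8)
The plan is to extract both identities formally from axiom~(d) — now available in Case~IIb via Proposition~\ref{prop:IIb-axiom-d} — together with the simple decomposition of $X^{\otimes 2}$, deliberately avoiding any appeal to (co-)associativity, which is exactly what is not known here.

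The first step is to record the relevant dimension count. Since $X^{\otimes 2} = X \oplus (\lw^2{X})^{\oplus 2}$ with $X$ and $\lw^2{X}$ non-isomorphic simples, the spaces $\Hom(X \otimes X, X)$ and $\Hom(X, X \otimes X)$ are both one-dimensional, spanned by $\mu$ and $\delta$ respectively (the latter being nonzero by construction). Hence there are scalars $a, b, a', b' \in k$ with
\begin{displaymath}
\mu\gamma = b\mu, \qquad \mu\gamma' = b'\mu, \qquad \gamma\delta = a\delta, \qquad \gamma'\delta = a'\delta.
\end{displaymath}
Next I would use $\gamma' = \tau\gamma\tau$, the commutativity relation $\mu\tau = \mu$, and the cocommutativity relation $\tau\delta = \delta$ to identify the primed scalars with the unprimed ones: $\mu\gamma' = \mu\tau\gamma\tau = \mu\gamma\tau = b\mu\tau = b\mu$, so $b' = b$; and symmetrically $\gamma'\delta = \tau\gamma\tau\delta = \tau\gamma\delta = \tau(a\delta) = a\delta$, so $a' = a$.

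Finally I would compose axiom~(d), namely $\gamma + \gamma' = \id_{X \otimes X} + \delta\mu$, with $\mu$ on the left and with $\delta$ on the right. Using $\mu\delta = \id_X$ these yield $2b\mu = \mu(\id + \delta\mu) = \mu + \mu\delta\mu = 2\mu$ and $2a\delta = (\id + \delta\mu)\delta = \delta + \delta\mu\delta = 2\delta$, so $a = b = 1$, which is the assertion. I do not expect a genuine obstacle: the only nontrivial input is the one-dimensionality of $\Hom(X \otimes X, X)$, which is immediate from the Case~II description of $X^{\otimes 2}$ (Proposition~\ref{prop:case2}). The only subtlety worth noting is that, unlike in \S\ref{ss:key} where $\gamma \cdot \delta\mu = \delta\mu$ held and pinned things down directly, here the conclusion has to be drawn purely from the already-established axiom~(d).
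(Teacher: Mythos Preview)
Your argument is correct and follows essentially the same route as the paper's proof: both use the one-dimensionality of $\Hom(X^{\otimes 2}, X)$ (equivalently, the $\tau$-invariance of any map $X^{\otimes 2} \to X$) to identify $\mu\gamma$ with $\mu\gamma'$, then compose axiom~(d) with $\mu$ on the left (respectively $\delta$ on the right) and use $\mu\delta = \id_X$ to conclude. The paper is slightly more terse, bypassing the explicit scalars $a,b,a',b'$ and writing directly $2\mu\gamma = \mu + \mu\delta\mu = 2\mu$, but the content is the same.
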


\begin{proof}
Since any map $X^{\otimes 2} \to X$ is invariant under $\tau$, it follows that $\mu \gamma=\mu \gamma'$. Thus, from axiom~(d), we find
\begin{displaymath}
2 \mu \gamma = \mu + \mu \delta \mu = 2 \mu,
\end{displaymath}
and so $\mu \gamma = \mu$. The computation of $\gamma \delta$ is similar.
\end{proof}

While we cannot prove axiom~(b) in full, we do get exactly half of it:

\begin{proposition} \label{prop:IIb-assoc}
Either $\mu$ is associative or $\delta$ is co-associative, but not both.
\end{proposition}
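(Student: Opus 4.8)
The plan is to prove the two halves of the dichotomy separately: that not both of $\mu,\delta$ can be (co)associative, and that at least one must be. For the first: suppose $\mu$ is associative and $\delta$ is coassociative. Together with the (co)commutativity already noted, Proposition~\ref{prop:IIb-axiom-d}, and the chosen normalization $\mu\delta=\id_X$, this means precisely that $(\mu,\delta)$ satisfies all four axioms of Definition~\ref{defn:o-frob}; since $X$ is a nontrivial simple it is strict, and $\dim X=-1$ by Proposition~\ref{prop:case2}(b). Hence (using $\lw^4 X\neq 0$) Proposition~\ref{prop:ofrob-to-etale} applies, so $A=X\oplus\bone\oplus X^*$ is $1$-Delannic, and Theorem~\ref{thm:delannic}, argued as in the proof of Theorem~\ref{thm:C-1}, yields a tensor functor $\Phi\colon\cD\to\cC$ with $\Phi(L_{\bb})$ equal to $X$ or $X^*$ (alternatively, invoke the universal property of \S\ref{ss:ofrob} directly). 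Since all Adams operations on $\rK(\cD)$ are trivial and the ring map induced by $\Phi$ intertwines $\psi^3$, the class $[\Phi(L_{\bb})]$ is fixed by $\psi^3$; as $\psi^3$ commutes with duality, $[X]$ is fixed by $\psi^3$ in either case. But then Proposition~\ref{prop:psi3-case1} puts $X$ in Case~I, contradicting the standing assumption that $X$ belongs to Case~IIb (the cases of \S\ref{ss:tri} being mutually exclusive).

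For the second half, suppose toward a contradiction that $\mu$ is not associative and $\delta$ is not coassociative. Put $f_1=\mu(\mu\otimes\id)$, $f_2=\mu(\id\otimes\mu)$, $g_1=(\delta\otimes\id)\delta$, and $g_2=(\id\otimes\delta)\delta$, so that $\mu$ is associative iff $f_1=f_2$ and $\delta$ is coassociative iff $g_1=g_2$. The group $\fS_3$ acts on $X^{\otimes 3}$ by permuting tensor factors, hence on $\Hom(X^{\otimes 3},X)$ and $\Hom(X,X^{\otimes 3})$. By Proposition~\ref{prop:sym3}, Proposition~\ref{prop:case2}(c), and the definition of Case~IIb, the simple $X$ occurs in $\Sym^3{X}$ and in $\bS_{(2,1)}(X)$ each with multiplicity one and not at all in $\lw^3{X}$; hence both of these $\Hom$-spaces are, as $\fS_3$-representations, isomorphic to $\mathrm{triv}\oplus\mathrm{std}$, where $\mathrm{std}$ is the two-dimensional irreducible and there is no sign summand. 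Commutativity of $\mu$ shows $f_1$ is fixed by the transposition $(1\,2)$ and that $f_2=\sigma f_1$ for some $3$-cycle $\sigma$; so $f_1,f_2$ have a common $\mathrm{triv}$-isotypic part $t_f$, and writing $f_i=t_f+v_i$ with $v_i$ in the $\mathrm{std}$-part we have $v_2=\sigma v_1$. Since $\mu$ is not associative, $v_1\ne 0$; a nonzero vector of $\mathrm{std}$ fixed by both a transposition and a $3$-cycle would span an $\fS_3$-stable line, impossible as $\mathrm{std}$ is irreducible, so $v_1$ is not a $\sigma$-eigenvector and $v_1,v_2=\sigma v_1$ form a basis of the $\mathrm{std}$-part. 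The identical argument applied to $g_1,g_2$, using that $\delta$ is not coassociative, shows the $\mathrm{std}$-components $w_1,w_2$ of $g_1,g_2$ form a basis of the $\mathrm{std}$-part of $\Hom(X,X^{\otimes 3})$.

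The contradiction now comes from the four composites $f_i\circ g_j\colon X\to X$. Using $\mu\delta=\id_X$, the defining formulas $\gamma=(\id\otimes\mu)(\delta\otimes\id)$, $\gamma'=(\mu\otimes\id)(\id\otimes\delta)$, and the consequences $\mu\gamma=\mu\gamma'=\mu$, $\gamma\delta=\gamma'\delta=\delta$ of Corollary~\ref{cor:IIb-axiom-d}, one computes
\[
f_1 g_1=\mu(\mu\delta\otimes\id)\delta=\mu\delta=\id_X,\qquad f_1 g_2=\mu\gamma'\delta=\mu\delta=\id_X,\qquad f_2 g_1=\mu\gamma\delta=\mu\delta=\id_X,\qquad f_2 g_2=\mu(\id\otimes\mu\delta)\delta=\mu\delta=\id_X.
\]
The composition pairing $\Hom(X^{\otimes 3},X)\times\Hom(X,X^{\otimes 3})\to k$ (sending $(f,g)$ to the scalar $fg$) is perfect, $\fS_3$-invariant, and vanishes between the $\mathrm{triv}$- and $\mathrm{std}$-parts since $\mathrm{triv}\otimes\mathrm{std}$ has no $\fS_3$-invariants, hence restricts to a perfect pairing on the $\mathrm{std}$-parts. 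The computation above gives $\langle f_i,g_j\rangle=1$ for all $i,j\in\{1,2\}$; subtracting the (vanishing) $\mathrm{triv}$--$\mathrm{std}$ cross terms shows $\langle v_i,w_j\rangle$ is independent of $i$ and $j$, so $\langle v_1-v_2,w_1\rangle=\langle v_1-v_2,w_2\rangle=0$. Since $w_1,w_2$ span the $\mathrm{std}$-part, $v_1-v_2$ lies in the radical of a perfect pairing, forcing $v_1-v_2=0$, i.e.\ $\mu$ is associative---contradiction. Hence at least one of $\mu,\delta$ is (co)associative, which together with the first part proves the proposition. The delicate point, and what takes some searching, is this second half: spotting that Corollary~\ref{cor:IIb-axiom-d} forces all four mixed composites $f_ig_j$ to equal $\id_X$, and then reading off from $\fS_3$-representation theory (non-degeneracy of the composition pairing on the $\mathrm{std}$-isotypic parts) that this is incompatible with both associators being nonzero.
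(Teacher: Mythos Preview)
Your proof is correct and follows essentially the same approach as the paper. The first half (ruling out both) is identical; for the second half, the paper argues directly that if $\mu$ is not associative then the three maps $\mu_i$ form a basis of $\Hom(X^{\otimes 3},X)$ and uses the same pairing computation $\langle \delta_j,\mu_i\rangle=1$ (via Corollary~\ref{cor:IIb-axiom-d}) to conclude $\delta_1=\delta_2=\delta_3$, whereas you package the same ingredients as a proof by contradiction working in the $\mathrm{std}$-isotypic part---but the substance is the same.
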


\begin{proof}
First suppose that $\mu$ is associative and $\delta$ is co-associative. Then $X$ is an o-Frobenius algebra. Just as in the proof of Theorem~\ref{thm:C-1}, we obtain a tensor functor $\Phi \colon \cD \to \cC$ satisfying $\Phi(L_{\bb})=X$. But this implies that $X$ is fixed by $\psi^3$, and so $X$ belongs to Case~I (Proposition~\ref{prop:psi3-case1}), a contradiction.

Now suppose that $\mu$ is not associative; we will show $\delta$ is co-associative. From Proposition~\ref{prop:sym3}, we find
\begin{displaymath}
X^{\otimes 3} = \bS_{(2,1)}(X)^{\oplus 2} \oplus (\lw^3{X})^{\oplus 2} \oplus (\lw^2{X})^{\oplus 2} \oplus X,
\end{displaymath}
and so $X$ appears with multiplicity three in $X^{\otimes 3}$; note that $X$ appears with multiplicity one in $\bS_{(2,1)}(X)$ and multiplicity~0 in $\lw^3{X}$ by Proposition~\ref{prop:case2}(c). For $1 \le i \le 3$, let
\begin{displaymath}
\mu_i \colon X^{\otimes 3} \to X
\end{displaymath}
be the map where we first multiply the two factors different from $i$, and then multiply the result with $i$. Define $\delta_i$ analogously. If we take into account the $\fS_3$ action on the  decomposition of $X^{\otimes 3}$, we see that $\Hom(X^{\otimes 3}, X)$ is the permutation representation of $\fS_3$. The $\mu_i$'s form an $\fS_3$-orbit, are not fixed by $\fS_3$ (since $\mu$ is not associative), and have non-zero restriction to $\Sym^3(X)$ (since $\mu_1\delta_1=\id_X$). It follows that the $\mu_i$'s form a basis of $\Hom(X^{\otimes 3}, X)$.

The natural map
\begin{displaymath}
\langle, \rangle \colon \Hom(X, X^{\otimes 3}) \times \Hom(X^{\otimes 3}, X) \to \End(X) = k
\end{displaymath}
is a perfect pairing. As remarked, $\mu_1 \delta_1=\id_X$, and so $\langle \delta_1, \mu_1 \rangle=1$. We have
\begin{displaymath}
\mu_1 \delta_3 = \mu \gamma \delta = \mu \delta = \id_X,
\end{displaymath}
where the first step is tautological and the second uses Corollary~\ref{cor:IIb-axiom-d}. Thus $\langle \delta_3, \mu_1 \rangle = 1$. All other cases are similar to one of these two, and so we find $\langle \delta_j, \mu_i \rangle = 1$ for all $i$ and $j$. Since this is a perfect pairing and the $\mu_i$'s are a basis, we find that $\delta_1=\delta_2=\delta_3$, which shows that $\delta$ is co-associative.

The case where $\delta$ is not co-associative is similar; in fact, we can simply reduce to the above case by considering $X^*$.
\end{proof}

\subsection{Further calculations}

Suppose we are in the above situation, where $\mu$ is not associative and therefore $\delta$ is co-associative. Let $\mu_1$, $\mu_2$, and $\mu_3$ be as in the proof of Proposition~\ref{prop:IIb-assoc}. There is one additional computation that we wish to explain.

\begin{proposition} \label{cor:IIb-compositions}
Letting $\kappa=\sqrt{-3}$, we have
\begin{align*}
\mu_1 \circ (\gamma' \otimes \id) &= \tfrac{1}{4} (2-\kappa-\kappa^{-1}) \mu_1 + \tfrac{1}{4} (\kappa-\kappa^{-1}) \mu_2 + \tfrac{1}{2} (1+\kappa^{-1}) \mu_3 \\
\mu_2 \circ (\gamma' \otimes \id) &= \tfrac{1}{4} (\kappa^{-1}-\kappa) \mu_1 + \tfrac{1}{4} (2+\kappa+\kappa^{-1}) \mu_2 + \tfrac{1}{2} (1-\kappa^{-1}) \mu_3 \\
\mu_3 \circ (\gamma' \otimes \id) &= \mu_3.
\end{align*}
\end{proposition}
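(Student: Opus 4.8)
The proposition is a computation inside the $3$-dimensional vector space $V=\Hom(X^{\otimes 3},X)$, which by the proof of Proposition~\ref{prop:IIb-assoc} has basis $\mu_1,\mu_2,\mu_3$ and, under permutation of the tensor factors, is the permutation representation of $\fS_3$. What is asserted is the matrix, in this basis, of the operator $T\colon V\to V$ sending $\phi$ to $\phi\circ(\gamma'\otimes\id)$. The plan is to pin $T$ down by a combination of structural identities and one trace computation.

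\emph{The easy row.} Since $\mu_3=\mu\circ(\mu\otimes\id)$ and $\gamma'\otimes\id$ acts on the first two factors, $T\mu_3=\mu\circ\big((\mu\gamma')\otimes\id\big)=\mu\circ(\mu\otimes\id)=\mu_3$, using $\mu\gamma'=\mu$ (Corollary~\ref{cor:IIb-axiom-d} together with the $\tau$-invariance of any map $X^{\otimes 2}\to X$).

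\emph{Reduction to a single unknown.} First I would write $\gamma'=\delta\mu+\gamma'_\circ$; from $\mu\gamma'=\mu$, $\gamma'\delta=\delta$ and $\mu\delta=\id$ one gets $\mu\gamma'_\circ=0$ and $\gamma'_\circ\delta=0$, so $\gamma'_\circ$ annihilates the $X$-summand of $X^{\otimes 2}$ on both sides, and a short diagram manipulation (the one that gives $\mu\gamma=\mu$) shows $\mu_i\circ(\delta\mu\otimes\id)=\mu_3$ for $i=1,2,3$. Hence $T\mu_i-\mu_3$ lies in the sum-zero subspace $\bar V$ spanned by $\mu_1-\mu_3$ and $\mu_2-\mu_3$ (the standard representation of $\fS_3$), $T$ carries $\bar V$ into itself, and $T|_{\bar V}$ is precomposition by $\gamma'_\circ$ on the $2$-dimensional space $\Hom\big((\lw^2 X)^{\oplus 2}\otimes X,\,X\big)\cong\Hom\big((\lw^2 X)^{\oplus 2},\lw^2 X\big)$. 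Using $\mu\gamma=\mu$, $\gamma\delta=\delta$, the dimensions $\dim X=-1$, $\dim\lw^2 X=1$, and the trace identities $\tr(\gamma)=\tr(\gamma^2)=0$ recorded in the proof of Proposition~\ref{prop:IIb-axiom-d}, one checks that $\gamma$ (hence $\gamma'$, hence $\gamma'_\circ$) acts as the identity on the $X$-summand and as a rank-one idempotent on the $(\lw^2 X)^{\oplus 2}$-summand; therefore $T|_{\bar V}$ is a rank-one idempotent. Writing $T(\mu_1-\mu_3)=a(\mu_1-\mu_3)+b(\mu_2-\mu_3)$ and $T(\mu_2-\mu_3)=c(\mu_1-\mu_3)+d(\mu_2-\mu_3)$ this gives $a+d=1$ and $ad=bc$. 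One further relation is again structural: applying axiom~(d) (Proposition~\ref{prop:IIb-axiom-d}) on the first two factors gives $T+R_\sigma T R_\sigma=\mathrm{Id}+S$, where $\sigma=(1\,2)$, $R_\sigma$ is its action on $V$, and $S=(-)\circ(\delta\mu\otimes\id)$ is the explicit operator with $S\mu_i=\mu_3$ for all $i$; comparing entries yields $c=-b$. Thus $T$ now depends on the single scalar $b$: $c=-b$, $ad=-b^2$, $a+d=1$.

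\emph{The last scalar.} The remaining relation I would extract from one more categorical trace computation, of the flavour of those in the proof of Proposition~\ref{prop:IIb-axiom-d}: compute $\tr\big((\gamma'\otimes\id)\circ\rho\big)$, where $\rho$ is the $3$-cycle of the tensor factors, by expanding the categorical trace over the simple constituents of $X^{\otimes 3}=\bS_{(2,1)}(X)^{\oplus 2}\oplus(\lw^3 X)^{\oplus 2}\oplus(\lw^2 X)^{\oplus 2}\oplus X$ (all of which are understood from \S\ref{ss:tri} and Proposition~\ref{prop:sym3}), and match it against the trace of $T$ composed with the matrix of $\rho$ on $V$. This gives $a-d=-b$, whence $b^2=(a-d)^2=(a+d)^2-4ad=1+4b^2$, so $3b^2=-1$ and $b=\tfrac13\kappa$ with $\kappa=\sqrt{-3}$ (the other square root just swaps the roles of $\gamma$ and $\gamma'$). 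Then $a=\tfrac12-\tfrac\kappa6$, $d=\tfrac12+\tfrac\kappa6$, $c=-\tfrac\kappa3$, and unwinding $T\mu_i=\mu_3+(T|_{\bar V})(\mu_i-\mu_3)$ produces exactly the three displayed formulas (note in particular $\kappa^{-1}=-\tfrac\kappa3$). The appearance of $\sqrt{-3}$ is explained by the fact that $T|_{\bar V}$ is a rank-one idempotent of the $2$-dimensional irreducible representation of $\fS_3$, so its image and kernel are the two eigenlines of the $3$-cycle, and writing those lines in the basis $\mu_1-\mu_3,\mu_2-\mu_3$ introduces a primitive cube root of unity.

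\emph{Main obstacle.} The real content, and the step most likely to be delicate, is the last trace computation: because $\mu$ is \emph{not} associative, the endomorphisms of $X^{\otimes 3}$ involved cannot be simplified in the usual way, so the computation must be pushed through by repeated use of axiom~(d), and one must correctly account for the contributions of $\lw^2 X$, $\lw^3 X$ (and any further dimension-zero constituents of $\bS_{(2,1)}(X)$) to the categorical trace. Everything else is formal manipulation of the $3\times 3$ matrix of $T$.
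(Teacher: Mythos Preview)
Your setup and reduction to a single unknown are correct and closely parallel the paper's argument: the paper also views $V=\Hom(X^{\otimes 3},X)$ as a right module over $R=\End(X^{\otimes 2})$, splits it as $M_1\oplus M_2$ with $M_1=k\mu_3$ and $M_2=\bar V$, and uses the explicit matrix form of $\gamma'$ on the $2$-dimensional simple $R$-module to see that $T|_{\bar V}$ is a rank-one idempotent depending on one free parameter. Your derivation of $c=-b$ from axiom~(d) is also fine.

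The gap is in your ``last scalar'' step. You propose to compute the categorical trace $\tr_{X^{\otimes 3}}\big((\gamma'\otimes\id)\circ\rho\big)$ by expanding over the simple constituents of $X^{\otimes 3}$ and then to match the $X$-isotypic contribution against $\tr_V(T\rho)$. The problem is that in Case~IIb we do \emph{not} know the simple decomposition of $X^{\otimes 3}$ well enough to carry this out: beyond the single copy of $X$ inside $\bS_{(2,1)}(X)$, the remaining constituents of $\bS_{(2,1)}(X)$ are unknown, and $\lw^3 X$ is only known to contain one of $\lw^2 X$ or $\lw^2 X^*$ (Proposition~\ref{prop:caseIIb-wedge3}). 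Any unknown simple $L$ of nonzero dimension contributes $\dim(L)\cdot\tr(f|_{\Hom(L,X^{\otimes 3})})$ to the categorical trace, and you have no way to evaluate these terms. Your ``Main obstacle'' paragraph flags exactly this difficulty but does not resolve it; in fact it cannot be resolved with the information available.

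The paper pins down $\kappa$ by a completely different, purely diagrammatic observation that avoids any appeal to the unknown decomposition: writing out $\mu_1\circ(\gamma'\otimes\id)$ one finds
\[
(a,b,c)\;\longmapsto\;\mu\big(\mu(a,b_{(1)}),\,\mu(b_{(2)},c)\big),
\]
which is manifestly invariant under the transposition $(1\;3)$ by commutativity of $\mu$ and cocommutativity of $\delta$. Hence $\mu_1\circ(\gamma'\otimes\id)$ lies in the span of $\mu_2$ and $\mu_1+\mu_3$, so its $\mu_1$- and $\mu_3$-coefficients agree. In your notation this says $a=1-a-b$, i.e.\ $a-d=-b$, which combined with $a+d=1$ and $ad=-b^2$ gives $\kappa^2=-3$ exactly as you compute. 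So your target relation is correct; only the mechanism you propose for obtaining it fails.
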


\begin{proof}
Let $M=\Hom(X^{\otimes 3}, X)$, which is a three dimensional $k$-vector space with basis $\mu_1$, $\mu_2$, $\mu_3$, and let $R=\End(X^{\otimes 2})$. We regard $M$ as a right $R$-module via $x \cdot a = x \circ (a \otimes 1)$, where $x \in M$ and $a \in R$.

The ring $R$ is isomorphic to $\rM_2(k) \oplus \rM_1(k)$, and thus has two simple modules $L_1=k$ and $L_2=k^2$. The element $\delta \mu$ of $R$ acts by the identity on $L_1$ and by~0 on $L_2$. A simple computation (similar to what appeared in the proof of Proposition~\ref{prop:IIb-assoc}) shows that $\mu_i \cdot \delta\mu=\mu_3$ for all $i$. Thus $\delta \mu$ is not the identity on $M$, and not identically~0, and so we must have an isomorphism $M \cong L_1 \oplus L_2$. More canonically, we have
\begin{displaymath}
M=M_1 \oplus M_2, \qquad M_1=\im(\delta \mu), \qquad M_2=\ker(\delta \mu),
\end{displaymath}
and $M_1 \cong L_1$ and $M_2 \cong L_2$. Clearly, $M_1$ is spanned by $\mu_3$, and $M_2$ has a basis given by $\mu_1-\mu_3$ and $\mu_2-\mu_3$.

From the results obtained so far, one can show (see Remark \ref{rem:AssocNotNeeded}) that there is a unique algebra homomorphism $\Phi \colon R \to \rM_2(k)$ satisfying
\begin{displaymath}
\Phi(\tau) = \begin{pmatrix} 1 \\ & -1 \end{pmatrix}, \qquad
\Phi(\gamma) = \tfrac{1}{2} \begin{pmatrix} 1 & 1 \\ 1 & 1 \end{pmatrix}, \qquad
\Phi(\gamma') = \tfrac{1}{2} \begin{pmatrix} 1 & -1 \\ -1 & 1 \end{pmatrix}.
\end{displaymath}
We think of $L_2=k^2$ as the space of row vectors, with $R$ acting on the right by matrix multiplication. We let $e_1$ and $e_2$ be the standard basis of $L_2$.

Now, in $L_2$ the vectors $e_1$ and $e_2$ span the $+1$ and $-1$ eigenspaces of $\tau$. In $M_2$, these eigenspaces are spanned by
\begin{displaymath}
\mu_1+\mu_2-2\mu_3 \qquad \text{and} \qquad \mu_1-\mu_2
\end{displaymath}
respectively. It follows that there is a unique $R$-module isomorphism $i \colon L_2 \to M_2$ satisfying
\begin{displaymath}
i(e_1) = \mu_1+\mu_2-2 \mu_3.
\end{displaymath}
Since $i(e_2)$ spans the $-1$ eigenspace of $\tau$, we must have
\begin{displaymath}
i(e_2) = \kappa (\mu_1-\mu_2)
\end{displaymath}
for some non-zero scalar $\kappa$.

Now, our present goal is to compute $\mu_i \cdot \gamma'$. For $i=3$, the stated formula follows directly from Corollary~\ref{cor:IIb-axiom-d}. The formula for $\Phi(\gamma')$ means
\begin{displaymath}
e_1 \gamma' = \tfrac{1}{2} (e_1-e_2), \qquad e_2 \gamma' = \tfrac{1}{2} (e_2-e_1).
\end{displaymath}
Applying $i$ to these equations and doing some basic linear algebra gives the formulas for $\mu_1 \cdot \gamma'$ and $\mu_2 \cdot \gamma'$.

Finally, we must compute $\kappa$. Writing down the diagram for $\mu_1 \cdot \gamma'$, we see that it is invariant under the transposition $(1\;3)$. The space of $(1\;3)$ invariant vectors in $M$ is spanned by $\mu_2$ and $\mu_1+\mu_3$. We thus have a relation
\begin{displaymath}
\mu_1 \cdot \gamma' = \lambda \mu_2 + \lambda'(\mu_1+\mu_3)
\end{displaymath}
for scalars $\lambda$ and $\lambda'$. In particular, the coefficients of $\mu_1$ and $\mu_3$ in $\mu_1 \cdot \gamma'$ are equal. From our formula for $\mu_1 \cdot \gamma'$, this gives
\begin{displaymath}
\tfrac{1}{4} (2-\kappa-\kappa^{-1}) = \tfrac{1}{2} (1+\kappa^{-1}),
\end{displaymath}
which simplifies to $\kappa^2=-3$.
\end{proof}

It is not clear to us if Case~IIb can actually occur. The above proposition determines what a category with such an object would look like in some low degrees.

\end{document}